    \setlist[enumerate]{label=\textup{(\roman*)}}
\numberwithin{equation}{section}
\renewcommand{\geq}{\geqslant}
\renewcommand{\leq}{\leqslant}
\renewcommand{\ge}{\geqslant}
\renewcommand{\le}{\leqslant}
\newcommand*{\nn}{\mathbb{N}}
\newcommand*{\from}{\colon} 
\newcommand*{\green}[1]{\ensuremath{\mathrel{\mathcal{#1}}}}
\newcommand*{\pro}[1]{\widehat{#1}}
\newcommand*{\pv}[1]{\ensuremath{\mathsf{#1}}} \newcommand*{\loc}{{L}}
\newcommand*{\init}{\alpha} \newcommand*{\term}{\omega}
\newcommand*{\edg}{E} 
\newcommand*{\rstab}{\operatorname{Stab}}
\newcommand*{\om}[2]{\ensuremath{\Omega_{#1}{\pv{#2}}}}
\newcommand*{\Om}[2]{\ensuremath{\overline{\Omega}_{#1}{\pv{#2}}}}
\newcommand*{\ltr}[1]{\mathtt{#1}} 
\newcommand*{\dex}[1]{\ensuremath{\mathsf{Pa}}_{\pv{#1}}}
\newcommand*{\Cl}[1]{\ensuremath{\mathcal{#1}}}
\newcommand*{\clos}[1]{\operatorname{Cl}_{\pv{#1}}}
\newcommand*{\glob}{\mathit{g}}
\newcommand\malcev{%
\mathbin{\hbox{$\bigcirc$\kern-9pt\raise1.2pt\hbox{\scriptsize$m$}\,}}}
\theoremstyle{plain}
\newtheorem{theorem}{Theorem}[section]
\newtheorem{lemma}[theorem]{Lemma}
\newtheorem{proposition}[theorem]{Proposition}
\newtheorem{corollary}[theorem]{Corollary}
\newtheorem{problem}[theorem]{Problem}
\theoremstyle{definition}
\newtheorem{definition}[theorem]{Definition}
\theoremstyle{remark} 
\newtheorem{remark}[theorem]{Remark}
\newtheorem{example}[theorem]{Example}
\begin{document}

\title{Open multiplication in relatively free profinite semigroupoids}

\author[J. Almeida]{Jorge Almeida}
\address{CMUP, Dep.\ Matem\'atica, Faculdade de Ci\^encias, Universidade do Porto, Rua do Campo Alegre 687, 4169-007 Porto, Portugal}
\email{jalmeida@fc.up.pt}

\author[A. Costa]{Alfredo Costa}
\address{University of Coimbra, CMUC, Department of Mathematics, Largo D. Dinis, 3000-143, Coimbra, Portugal}
\email{amgc@mat.uc.pt}

\author[H. Goulet-Ouellet]{Herman Goulet-Ouellet}
\address{Universit\'e de Moncton, 60 Notre-Dame-du-Sacr\'e-Coeur Street, Moncton E1A 3E9, New-Brunswick, Canada}
\email{herman.goulet-ouellet@umoncton.ca}

\begin{abstract}
  The purpose of this paper is to extend some useful results, such as
  the multiplication being open, previously known for suitable
  finitely generated relatively free profinite semigroups, to
  relatively free profinite semigroupoids over finite-vertex graphs.
  This extension is used to give a profinite characterization of
  recurrent words over infinite alphabets and to establish new results
  about stabilizers in relatively free profinite semigroups and
  semigroupoids.
\end{abstract}

\maketitle

\section{Introduction}
\label{sec:intro}

Pseudovarieties of semigroups are considered one of the most fruitful
frameworks of finite semigroup theory (see the recent survey~\cite{Almeida:2025}). 
Several results have shown the convenience and advantages to enlarge our scope
to semigroupoids, profinite semigroups, and even profinite
semigroupoids. From the point of view of Category Theory, semigroupoids are simply the result of dropping from the
definition of small categories the requirement of existence of local
identities. Their exploration as partial algebras generalizing
semigroups was initiated in the decade of 1980 by
Tilson~\cite{Tilson:1987}, with powerful applications in the study of finite
semigroups. In the same decade, building on work of
Reiterman and Banaschewski \cite{Reiterman:1982,Banaschewski:1983},
the first author promoted the development of relatively free profinite
semigroups as a means of giving a proper equational, syntactical,
approach to pseudovarieties of semigroups. Both approaches were
afterwards successfully employed in several major developments of the
field~\cite{Rhodes&Steinberg:2009qt,Almeida:1996c,Almeida&ACosta:2015hb}.
The papers~\cite{Almeida&Weil:1996,Jones:1996} pioneered the combination
of the two approaches, establishing the study of free profinite
semigroupoids relatively to pseudovarieties of semigroupoids.
Other relevant examples of this combination include the papers~\cite{Rhodes&Steinberg:2001,Almeida&Steinberg:2000a}.

Motivation for those studies stems from Eilenberg's theorem relating
varieties of languages with pseudovarieties of
semigroups~\cite{Eilenberg:1976}. In such varieties, only languages
over finite alphabets are considered. This explains the focus given in
the literature to relatively free profinite semigroups generated by
\emph{finite} alphabets. In this context, close connections are
established between the algebraic-topological structure of free
profinite semigroups over a pseudovariety $\pv V$ and the
corresponding variety of languages $\Cl V$. An example, established
in~\cite{Almeida&ACosta:2007a}, is that when $\pv V$ contains all
finite nilpotent semigroups, $\Cl V$ is closed under concatenation of
languages if and only if the corresponding relatively free profinite
semigroups have open multiplication (i.e., the product of open sets is
open). 
This result proved useful to understand the structure of
those semigroups by allowing us to think of their
elements, often aptly called \emph{pseudowords}, as analogous to finite words in some key ways.

Relatively free profinite semigroups over infinite alphabets were also
studied, but they present some significant challenges; they may not be metrizable,
while the finitely generated ones always are (see, for
instance~\cite{Almeida&Steinberg:2008}). Here, we present extensions
to (non-necessarily finitely generated) relatively free profinite
semigroups, over suitable pseudovarieties, of results previously
established in the finitely generated case only. In fact, we go
further, making these generalizations for relatively free profinite
\emph{semigroupoids} generated by finite-vertex (directed) \emph{graphs}. Graphs
generalize alphabets, as we see alphabets as one-vertex graphs with loops representing the letters.
Accordingly, the generalization of pseudowords to relatively free
profinite semigroupoids are called \emph{pseudopaths}. Most of the
aforementioned extensions concern the link between open multiplication
(in semigroups and semigroupoids) and concatenation-closed
pseudovarieties (of semigroups and of semigroupoids, respectively). We
get complete characterizations for such pseudovarieties
(cf.~Theorems~\ref{t:concatenation-closed-pseudovarieties}
and~\ref{t:sgpd-concatenation-closed-pseudovarieties}), improving
what, in the semigroup case, was limited to finite alphabets only.

Our characterization of open multiplication in semigroupoids via
\emph{fully factorizable} nets
(Corollary~\ref{c:open-mult-fully-factorizable}) improves the one
from~\cite[Lemma 3.2]{Almeida&ACosta&Costa&Zeitoun:2019}, which is
limited to metrizable semigroups. This new characterization is used in
the proofs of several results concerning what we call \emph{prefix accessible}
pseudopaths (i.e., infinite-length cluster points of prefixes of right
infinite paths). 
One such result is a profinite characterization of recurrent right-infinite paths over finite-vertex graphs (Theorem~\ref{t:idempotents-in-Pw-pseudopath-version}). 
This is motivated by parallel ongoing work on a profinite approach to the study of S-adic sequences~\cite{Almeida&ACosta&Goulet-Ouellet:2024b}, where we deal with relatively free profinite semigroupoids over finite-vertex graphs having possibly infinitely many edges (which are not necessarily metrizable).

The culmination of the paper is
Theorem~\ref{t:net-characterization-of-L-minimal-right-stabilizers}, which
gives a characterization (also applied
in~\cite{Almeida&ACosta&Goulet-Ouellet:2024b}) of stabilizers of
prefix accessible pseudopaths for concatenation-closed
pseudovarieties. Fully factorizable nets appear in its proof, in whose preparation
we also get a structural result about stabilizers of
free profinite semigroups over equidisivible pseudovarieties
(Theorem~\ref{t:right-stabilizers-L-chain} and
Corollary~\ref{c:category-right-stabilizers-L-chain}). This overlaps
with results obtained by Rhodes and Steinberg using other methods,
cf.~\cite{Rhodes&Steinberg:2001} and
Remark~\ref{r:stabilizers-rhodes-steinberg}. Stabilizers of relatively
free profinite semigroups got significant attention in the
literature~\cite{Rhodes&Steinberg:2001,Henckell&Rhodes&Steinberg:2010b};
they played a role, via \emph{stable pairs}, in the recently announced
proof that the Krohn-Rhodes complexity is
decidable~\cite{Margolis&Rhodes&Schilling:arXiv:2406.18477,Margolis&Rhodes&Schilling:arXiv:2501.00770v1}.

The paper is organized as follows. After this introduction,
Section~\ref{sec:prelims} sets some basic definitions and notation.
This is continued in Section~\ref{sec:relat-free-prof}, dedicated to
relatively free profinite semigroupoids.
Section~\ref{sec:open-multiplication}, the core of the paper, contains
a systematic study of links between open multiplication and fully
factorizable nets on one hand, and concatenation-closed pseudovarieties on the other hand.
Section~\ref{sec:equid-pseud-semigr} revisits equidivisible
pseudovarieties of
semigroups~\cite{Almeida&ACosta:2017,Almeida&ACosta:2023}, extending
them to semigroupoids; this pops up naturally from the study of
concatenation-closed pseudovarieties, and sets the stage for
Theorem~\ref{t:right-stabilizers-L-chain}, which is critically important for our paper in preparation~\cite{Almeida&ACosta&Goulet-Ouellet:2024b}.
Section~\ref{sec:recurrence} is a study of pseudopaths and their prefixes, with an emphasis on prefix accessible pseudopaths. Section~\ref{sec:stabilizers} closes the paper
with results about stabilizers.

\section{Basic definitions}
\label{sec:prelims}

\subsection{Graphs}

In this paper, a \emph{graph} $G$ is a set equipped with a partition into a set $V(G)$ of \emph{vertices} and a set $E(G)$ of \emph{edges}, and endowed with two mappings $\init,\term\from E(G)\to V(G)$. 
The mappings $\init$ and $\term$ are called
the \emph{adjacency} mappings, with
$\init(g)$ and $\term(g)$ respectively being the \emph{source} and the
\emph{range} of an edge~$g$. Set
$G(x,y)=\init^{-1}(x)\cap\term^{-1}(y)$, and $G(x)=G(x,x)$, for
$x,y\in V(G)$, and call $G(x,y)$ a \emph{hom-set} and $G(x)$ a
\emph{local set}. Edges in the same hom-set are \emph{coterminal}. If
we endow $G$ with a Hausdorff topology such that $V(G)$ and $E(G)$ are closed
and the adjacency mappings are continuous, then we say that $G$ is a \emph{topological
graph}. If this topology is compact, then $G$ is a \emph{compact graph}.
We also include the Hausdorff property in the definition of compact space.

With the definition of graph given above, the notions of
subgraphs, morphisms of graphs, and products of graphs follow naturally,
cf.~\cite{Tilson:1987,Almeida&Weil:1996,Jones:1996}. 
More precisely, a \emph{morphism of graphs} $\varphi\from G\to H$ is a map such that
$\varphi(V(G))\subseteq V(H)$ and 
$\varphi(G(x,y))\subseteq H(\varphi(x),\varphi(y))$ for all $x,y\in V(G)$.
A morphism
$\varphi\from G\to H$ is \emph{faithful} if the restriction
$\varphi\from G(x,y)\to H(\varphi(x),\varphi(y))$ is injective for
all $x,y\in V(G)$; it is a \emph{quotient} if its restriction $V(G)\to
V(H)$ is bijective and all its restrictions $G(x,y)\to
H(\varphi(x),\varphi(y))$ are onto. A \emph{graph
  equivalence} on $G$ is an equivalence relation $\theta$
on $G$ that does not identify distinct vertices and only identifies distinct edges that are coterminal.
The natural mapping $q_\theta\from G\to G/{\theta}$
induces in $G/{\theta}$ a graph structure for which $q_\theta$ is a
quotient morphism.

In this paper, we use the term \emph{alphabet} as a synonym for \emph{set}. 
When the context is clear, we may see an alphabet $A$ as a one-vertex graph whose edges are the elements of $A$. For example, the empty set is then seen as the one-vertex graph with no edges. Likewise a topological space may be viewed as a one-vertex topological graph.

\subsection{Semigroupoids}

For a graph $S$,
the set of pairs of \emph{composable egdes}
is the set 
\[
    D(S)=\{(s,t)\in E(S)\times E(S):\init(s)=\term(t)\}.
\]
A \emph{semigroupoid} is a graph $S$ endowed with a mapping $m\from D(S)\to E(S)$,
called the \emph{composition} or \emph{multiplication} of $S$, such that, denoting $m(s,t)$ by $st$, one has:
\begin{enumerate}
    \item $\init(st)=\init(t)$ and $\term(st)=\term(s)$ for every $(s,t)\in D(S)$;
    \item if $(s,t)\in D(S)$ and $(t,r)\in D(S)$ then $(st)r=s(tr)$.
\end{enumerate}
A \emph{local identity} at vertex $x\in V(S)$
is an edge $e\in S(x)$ such that $es=te$ for all $s\in\term^{-1}(x)$ and $t\in\init^{-1}(x)$.
The semigroupoids where each vertex has a (necessarily unique) local identity are precisely the small categories.
We denote by $S^I$ the small category obtained from the semigroupoid $S$ by adjoining
at each vertex $v$ a local identity $1_v$ that is not in $S$.

\begin{remark}
  Frequently in the literature on Semigroup Theory
  (e.g.~\cite{Tilson:1987,Almeida&Weil:1996,Jones:1996}), a pair of edges $(s,t)\in E(S)\times E(S)$ is defined to be composable
  when we have instead $\term(s)=\init(t)$. 
  The definition adopted in this paper is instead consistent with the usual
  convention in Category Theory for composition of morphisms, and is also used in our paper~\cite{Almeida&ACosta&Goulet-Ouellet:2024b}.
\end{remark}

If $S$ is a topological (compact) graph whose multiplication $m$ is continuous, then we say that $S$ is a \emph{topological (compact) semigroupoid}. 
In that case, $S^I$ is also a topological (compact) semigroupoid with the topology defined as follows: first, carry the topology of $V(S)$ to the set $\{1_v \mid v\in V(S)\}$ of new local identities using the bijection $v\mapsto 1_v$;
then equip $E(S^I)$ with the coproduct topology of $E(S)$ with $\{1_v \mid v\in V(S)\}$.
With this topology, $E(S)$ is a clopen subspace of $E(S^I)$ and, in case $V(S)$ is finite, the new local identities are isolated points.

We may see a (topological) semigroup $S$ as a one-vertex (topological)
semigroupoid whose edges are the elements of $S$, the composition
being the semigroup operation (and the topology of the space of edges
being that of~$S$). This is convenient for dealing simultaneously with
semigroups and semigroupoids. It extends the aforementioned way of
seeing sets as one-vertex graphs. Accordingly, we see the empty set
as a semigroup, as done, for instance,
in~\cite{Rhodes&Steinberg:2009qt}, but not in~\cite{Almeida:1994a}.

A subgraph $T$ of a semigroupoid  $S$ is a
\emph{subsemigroupoid}
of $S$ if $T$ is a semigroupoid
whose multiplication is a restriction
of the multiplication of $S$. Given a nonempty
subgraph $X$ of the semigroupoid $S$,
the intersection of all
subsemigroupoids of $S$
containing $X$ is a semigroupoid, called the
\emph{subsemigroupoid of $S$ generated by $X$}.

For semigroupoids $S$ and $T$,
a \emph{homomorphism of semigroupoids}
from $S$ to~$T$ is a morphism of graphs $\varphi:S\to T$
such that $\varphi(s\cdot t)=\varphi(s)\cdot\varphi(t)$
for every $(s,t)\in D(S)$.
If the restriction of $\varphi$ to $V(S)$ is injective, then
$\varphi(S)$ is a subsemigroupoid of~$T$, but that may not be the case otherwise~\cite[Example 3.1]{Almeida&ACosta:2007a}.
A homomorphism $\varphi\from S\to T$ extends to a homomorphism $\varphi^I\from S^I\to T^I$,
such that $\varphi^I(1_v)=1_{\varphi(v)}$ for every $v\in V(S)$.
Note that $\varphi^I$ is continuous if $\varphi$ is a continuous homomorphism of topological semigroupoids. In the absence of confusion, we may denote $\varphi^I$ also by $\varphi$.

The \emph{free semigroupoid generated by the graph $A$}, denoted
$A^+$, is constructed as follows: 
\begin{enumerate}
    \item the set of vertices is given by $V(A^+)=V(A)$; 
    \item for every $x,y\in V(A)$, the set $A^+(x,y)$ consists of the finite sequences $a_0\cdots
a_{n-1}$, with $n\geq 1$ such that $(a_i,a_{i+1})\in D(A)$,
$\term(a_0)=y$ and $\init(a_{n-1})=x$;
    \item for $(s,t)\in D(A^+)$, edge composition is given by concatenation, that is, $(s,t)\mapsto st$.
\end{enumerate}
The small category $(A^+)^I$ is denoted $A^*$. The edges of $A^+$ are the
\emph{paths} over $A$, with the local identities of $A^*$ being the
\emph{empty paths}. For the path $a_0\cdots a_{n-1}$ as above, the
number $n$ is its \emph{length}, and the length of empty paths is
zero. A \emph{language} over the graph $A$ is a subset of $E(A^+)$.
If $A$ is a set (viewed as a one-vertex graph) then $A^+$ and $A^*$ are simply the free semigroup and the
free monoid generated by $A$, respectively.

We adopt the following conventions for subgraphs $X,Y$ of a semigroupoid $S$:
\begin{itemize}
    \item the least subgraph of $S$ containing $\{xy:(x,y)\in D(S)\cap E(X)\times E(Y)\}$ is denoted $XY$;
    \item $X^+$ denotes the subsemigroupoid of $X$ generated by $S$, when no confusion arises with the free semigroupoid generated by $X$.
\end{itemize}

Ideals and Green's relations in semigroups are classical and can be recalled in many standard textbooks~\cite{Clifford&Preston:1961,Almeida:1994a,Grillet:1995bk,Howie:1976,Lallement:1979}. Ideals and Green's relations in a semigroupoid $S$ are defined as in
semigroups. For example, a subgraph $J$ is an ideal when $S^IJS^I=J$. We also let
$x\leq_{\green{R}}y$ when $xS^I\subseteq yS^I$, in which case we say that $y$ is a \emph{prefix} of $x$. We further write $x\green{R}y$ when
$xS^I=yS^I$. Dually, one has the relations $\leq_{\green{L}}$ and
$\green{L}$, and the notion of \emph{suffix}.

\subsection{Pseudovarieties}

A semigroupoid $S$ is a \emph{divisor} of a semigroupoid $T$ if there
is a semigroupoid $R$ for which there are a faithful homomorphism
$R\to T$ and a quotient homomorphism $R\to S$. In particular, a
semigroup $S$ is a divisor of $T$ if and only if it is a subsemigroup of
a homomorphic image of $T$.

A \emph{pseudovariety of semigroups (respectively, semigroupoids)} is
a class of finite semigroups (respectively, semigroupoids) closed
under taking divisors, finite (possibly empty) direct products, and,
in the case of semigroupoids, finite coproducts. The
pseudovariety of all finite semigroups is denoted $\pv{S}$, while that of
all finite semigroupoids is denoted $\pv{Sd}$. As the empty product is
allowed, all pseudovarieties contain the trivial semigroup.

The intersection of a family of pseudovarieties of semigroupoids (semigroups) is a pseudovariety of semigroupoids (semigroups).
For a pseudovariety of semigroups~$\pv V$, the
intersection of all pseudovarieties of semigroupoids containing $\pv V$
is called the  \emph{global} of $\pv V$, denoted by~$\glob \pv V$;
another related pseudovariety of semigroupoids is the class $\ell\pv V$, called the \emph{local of $\pv V$},
of semigroupoids whose local sets are semigroups from $\pv V$.
If $\pv W$ is a pseudovariety of semigroupoids, then  $\pv V=\pv S\cap\pv W$ is a pseudovariety of semigroups
and the inclusions $\glob \pv V\subseteq \pv W\subseteq\ell\pv V$ hold.
A pseudovariety of semigroups is \emph{local} if $\glob \pv V=\ell \pv V$. See~\cite{Almeida:1996c,Tilson:1987,Almeida&Weil:1996} for
introductions to this notion, explanations for its motivation, and
examples of local and non-local pseudovarieties.

\subsection{Profinite semigroupoids}

In what follows:
\begin{itemize}
    \item a \emph{finite-vertex graph (semigroupoid)} means a graph (semigroupoid) with only a finite number of vertices;
    \item finite graphs and semigroupoids are viewed as topological graphs and topological semigroupoids endowed with the discrete topology;
    \item a \emph{quotient inverse limit} of semigroupoids is an inverse limit
    of semigroupoids where every connecting homomorphism is a quotient homomorphism.
\end{itemize}

Let $\Cl C$ be a class of compact semigroupoids.
A compact semigroupoid is \emph{pro-$\Cl C$} if it is an inverse limit of members of $\Cl C$.
We use the terms \emph{profinite semigroupoid} and \emph{profinite semigroup} as synonyms for pro-$\pv{Sd}$ and pro-$\pv S$, respectively.
A compact semigroupoid~$S$ is \emph{residually $\Cl C$} if, for every $u,v\in S$ such that $u\neq v$, there is a continuous homomorphism $\varphi\from S\to F$, with $F\in\Cl C$, such that $\varphi(u)\neq\varphi(v)$.

The next proposition shows that the two properties of being pro-$\Cl C$ and being residually $\Cl C$ are closely related. It will be used freely throughout the paper. It is encapsulated in~\cite[Theorem 5.1]{Jones:1996}, where the hypothesis that the semigroupoid is finite-vertex is implicit (see also~\cite[Theorem 4.1]{Jones:1996}). Without this hypothesis,
the proposition fails (see the comment following~\cite[Theorem 3.6]{Almeida&ACosta:2007a}). The aforementioned theorem~\cite[Theorem 5.1]{Jones:1996} is stated for (small) categories instead of semigroupoids, but, as mentioned in the last section of the same paper, the statements and arguments hold \emph{mutatis mutandis} for semigroupoids.  
  
  \begin{proposition}
    Let $S$ be a finite-vertex compact semigroupoid
    and $\pv V$ be a pseudovariety of semigroupoids.
    The following conditions are equivalent:
    \begin{enumerate}
    \item $S$ is pro-$\pv V$.
    \item $S$ is a quotient inverse limit of pro-$\pv V$ semigroups.
    \item $S$ is residually \pv V.
    \item For every $u,v\in S$ such that $u\neq v$, there
      is a continuous quotient homomorphism $\varphi\from S\to F$, with $F\in\pv V$, such that $\varphi(u)\neq\varphi(v)$.
    \end{enumerate}
  \end{proposition}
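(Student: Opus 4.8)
The plan is to use residual finiteness, condition~(iii), as the hub and prove (i)~$\Rightarrow$~(iii)~$\Rightarrow$~(iv)~$\Rightarrow$~(i), fitting condition~(ii) in afterwards. The implication (i)~$\Rightarrow$~(iii) needs no hypothesis on the vertex set: if $S=\varprojlim_i S_i$ with each $S_i\in\pv V$, the topology of $S$ is inherited from $\prod_i S_i$, so any two distinct points of $S$ differ in some coordinate, and the corresponding projection $S\to S_i$ is a continuous homomorphism into a member of~$\pv V$. The same idea gives (ii)~$\Rightarrow$~(iii): the terms of a quotient inverse system are finite-vertex, because quotient homomorphisms are bijective on vertices and hence so are the projections of the limit onto its terms; one separates the images of $u\neq v$ inside a single term by a continuous homomorphism into $\pv V$ (trivially if that term already lies in $\pv V$, otherwise by the case (i)~$\Rightarrow$~(iii) just proved) and composes with the projection.

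The substance is (iii)~$\Rightarrow$~(iv)~$\Rightarrow$~(i), and here finiteness of $V(S)$ is indispensable through the following auxiliary fact: \emph{there is a continuous quotient homomorphism $\varphi_0\from S\to S_0$ with $S_0\in\pv V$ whose restriction to $V(S)$ is injective.} To produce it, separate each pair of distinct vertices by a continuous homomorphism into a member of~$\pv V$ (possible by~(iii)), pair up the finitely many maps so obtained into a single continuous homomorphism into a finite direct product of members of~$\pv V$, which is then injective on $V(S)$; now replace the codomain by the image of this map, which is a subsemigroupoid of the product precisely because the map is injective on vertices, hence a member of~$\pv V$, and the corestricted map is a quotient homomorphism, being onto on edges by construction and bijective on vertices since $V(S)$ is finite. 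Granting $\varphi_0$, the step (iii)~$\Rightarrow$~(iv) is immediate: if $\varphi\from S\to F$ is a continuous homomorphism with $F\in\pv V$ separating $u$ from~$v$, corestrict $(\varphi_0,\varphi)$ to its image to get a continuous quotient homomorphism onto a member of~$\pv V$ — again a subsemigroupoid of a finite product, $\varphi_0$ already forcing vertex-injectivity — which still separates $u$ from~$v$.

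For (iv)~$\Rightarrow$~(i) I would form the poset $\mathcal Q$ of continuous quotient homomorphisms $q\from S\to F_q$ with $F_q\in\pv V$ and $q$ injective on $V(S)$, ordered by refinement of kernels; it is directed, since any two such maps are dominated by the corestriction of their product map to its image, which again lies in~$\pv V$ thanks to vertex-injectivity. The canonical homomorphism $S\to\varprojlim_{q\in\mathcal Q}F_q$ is injective by~(iv) combined with $\varphi_0$ exactly as above — any two distinct points of $S$ are distinguished by a member of~$\mathcal Q$ — and surjective by the usual compactness argument: each $q$ is onto and the system is directed, so the image of $S$ is dense in $\varprojlim_q F_q$, while it is also compact, hence closed, in that Hausdorff space; a continuous bijection from a compact space onto a Hausdorff space is a homeomorphism, and the map is plainly a homomorphism, so $S$ is pro-$\pv V$. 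The inverse system just built has quotient connecting morphisms and terms in~$\pv V$, so (i)~$\Rightarrow$~(ii) is clear (and the constant inverse system settles it in any case), closing the cycle.

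The only real difficulty is the auxiliary fact, and it is exactly what fails without finiteness of $V(S)$: with infinitely many vertices one cannot in general separate them all by a single finite quotient, and then the family of quotients one wants to take an inverse limit over is no longer directed, because the image of a product of two vertex-collapsing homomorphisms need not be a subsemigroupoid and so need not belong to~$\pv V$. The rest is bookkeeping I would merely invoke: inverse limits of compact semigroupoids exist and are formed coordinatewise; pseudovarieties of semigroupoids are closed under subsemigroupoids (as divisors) and finite direct products; quotient homomorphisms are bijective on vertices; and a continuous surjection onto a finite, hence discrete, semigroupoid is automatically a topological quotient.
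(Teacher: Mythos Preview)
Your argument is correct and complete. The paper does not supply its own proof of this proposition; it defers to \cite[Theorem~5.1]{Jones:1996} (stated there for categories, with the semigroupoid case noted to hold \emph{mutatis mutandis}), so there is no in-paper proof to compare against. Your route --- using residual-$\pv V$ as the hub, the auxiliary vertex-separating quotient $\varphi_0$ to pass from (iii) to (iv), and the directed system of finite quotients for (iv)$\Rightarrow$(i) --- is the standard one and matches what one finds in Jones.

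Two minor observations. First, in condition~(ii) the word ``semigroups'' is almost certainly a typo for ``semigroupoids'' (a quotient inverse limit has connecting maps bijective on vertices, so one-vertex terms would force $S$ itself to have one vertex); you have correctly read it that way. Second, in your step (iv)$\Rightarrow$(i) the appeal to $\varphi_0$ is harmless but unnecessary: since quotient homomorphisms are bijective on vertices by definition, every map furnished by~(iv) already lies in your poset~$\mathcal Q$, so (iv) alone separates points in the inverse limit. Your identification of the finite-vertex hypothesis as precisely what makes $\varphi_0$ exist --- and hence what makes $\mathcal Q$ directed --- is exactly the point the paper flags when it remarks that the proposition fails without that hypothesis.
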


  The next proposition appears in a different form as part of~\cite[Proposition 10.2]{Jones:1996}.
  Alternatively, the arguments used in the proof of~\cite[Proposition 3.5]{Almeida:2003cshort}, for the special case of semigroups,
  extrapolate straightforwardly to semigroupoids.
  
  \begin{proposition}
    \label{p:recognition-pro-v-semigroupoid}
    For a pseudovariety of semigroupoids $\pv V$, let $S$ be a finite-vertex pro-$\pv V$ semigroupoid and $K\subseteq S$. Then $K$ is clopen if and only if
    there is a continuous quotient homomorphism $\varphi\from S\to T$ such that $T\in\pv V$ and $K=\varphi^{-1}\varphi(K)$.
 \end{proposition}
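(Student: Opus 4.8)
The plan is to dispatch the easy implication with a one-line continuity argument and the converse with the standard compactness argument, the only subtlety being how vertices are handled when several quotient homomorphisms are combined. The easy implication is immediate: if $\varphi\from S\to T$ is a continuous homomorphism with $T\in\pv V$ --- so that $T$ is finite, hence discrete --- and $K=\varphi^{-1}\varphi(K)$, then $\varphi(K)$ is clopen in $T$ and therefore $K$ is clopen in $S$.

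For the converse, suppose $K$ is clopen. If $S$ is finite then $\varphi=\mathrm{id}_S$ already works, so I may assume $S$ is infinite; then $|S|\ge 2$, and part~(iv) of the previous proposition supplies at least one continuous quotient homomorphism of $S$ onto a member of $\pv V$, which settles the cases $K=\emptyset$ and $K=S$. So assume $\emptyset\neq K\neq S$, so that $K$ and $S\setminus K$ are both nonempty compact sets. The first step is, for a fixed $u\in K$, to produce a continuous quotient homomorphism $\psi_u\from S\to R_u$ with $R_u\in\pv V$ such that $K_u:=\psi_u^{-1}\psi_u(u)$ is a clopen neighbourhood of $u$ contained in $K$: for each $v\in S\setminus K$, part~(iv) of the previous proposition gives a continuous quotient homomorphism $\varphi_{u,v}\from S\to F_{u,v}\in\pv V$ with $\varphi_{u,v}(u)\neq\varphi_{u,v}(v)$; the open sets $\varphi_{u,v}^{-1}(\varphi_{u,v}(v))$ cover the compact set $S\setminus K$, so finitely many of them (say for $v_1,\dots,v_n$) suffice, and $\psi_u$ is taken to be the diagonal homomorphism $S\to\prod_{k=1}^{n}F_{u,v_k}$ corestricted to its image $R_u$. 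The second step is to cover the compact set $K$ by finitely many $K_{u_1},\dots,K_{u_m}$, let $\Phi\from S\to T$ be the diagonal homomorphism of $\psi_{u_1},\dots,\psi_{u_m}$ corestricted to its image $T$, and verify that $K=\Phi^{-1}\Phi(K)$: the inclusion $\subseteq$ is automatic, and if $\Phi(x)=\Phi(y)$ with $y\in K$, then choosing $l$ with $y\in K_{u_l}$ gives $\psi_{u_l}(x)=\psi_{u_l}(y)=\psi_{u_l}(u_l)$, whence $x\in K_{u_l}\subseteq K$.

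The point requiring care --- and the place where the finite-vertex hypothesis is genuinely used --- is the formation of the finite direct products and diagonal homomorphisms above. Each quotient homomorphism supplied by the previous proposition restricts to a bijection on vertices, so all the factors in sight may be regarded as semigroupoids over the single vertex set $V(S)$; their finite direct product is then again a member of $\pv V$; and the diagonal homomorphism into it is injective on vertices, so its image is a subsemigroupoid and hence itself a member of $\pv V$, onto which the diagonal map is a continuous quotient homomorphism. I expect this vertex bookkeeping to be the only genuine obstacle; the remaining verifications are routine.
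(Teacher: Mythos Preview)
Your proof is correct. The paper does not supply its own proof of this proposition; it merely cites \cite[Proposition~10.2]{Jones:1996} and remarks that the argument of \cite[Proposition~3.5]{Almeida:2003cshort} for profinite semigroups extrapolates to semigroupoids. Your two-stage compactness argument is exactly that standard argument, and the vertex bookkeeping you flag is handled correctly: since each quotient homomorphism supplied by the preceding proposition is bijective on vertices, every diagonal map you form is injective on vertices, so its image is a subsemigroupoid (hence a divisor, hence in~$\pv V$) and the corestricted diagonal is itself a continuous quotient homomorphism.
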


 \section{Relatively free profinite semigroupoids}
 \label{sec:relat-free-prof}

 In this section, we recall the definition of relatively free profinite semigroupoid and establish some basic facts concerning them; see \cite{Jones:1996} for further details.
Let $A$ be a finite-vertex graph. 
Take a pseudovariety of semigroupoids~$\pv V$. 
A \emph{free pro-$\pv V$ semigroupoid over $A$} is a pair $(F,\iota)$ of a pro-$\pv V$ semigroupoid $F$ and a continuous graph morphism $\iota\from A\to F$ with the following universal property:
for every continuous graph morphism $\varphi\from A\to S$ where $S$ is a pro-$\pv V$ semigroupoid, there is a unique continuous homomorphism
$\pro\varphi\from F\to S$ such that $\pro\varphi\circ \iota=\varphi$ (see Diagram~\ref{eq:universal-property-free}). We also say that $(F,\iota)$ is a \emph{relatively free profinite semigroupoid}.
\begin{equation}\label{eq:universal-property-free}
\begin{split}
    \xymatrix@C=10mm@R=6mm{
      A \ar[r]^{\iota} \ar[dr]_\varphi & F \ar[d]^{\pro{\varphi}}\\
       & S
    }
\end{split}
\end{equation}

It turns out that there is indeed a free pro-$\pv V$ semigroupoid over $A$, denoted $(\Om AV,\iota_A)$, 
which moreover (by standard categorical arguments) is unique up to isomorphism.
In view of this uniqueness, we may speak about \emph{the} free pro-$\pv V$ semigroupoid over $A$. 
The subsemigroupoid of $\Om AV$ generated by $\iota_A(A)$ is denoted $\om AV$ and is dense in $\Om AV$.

From the definition of $\Om AV$, it follows easily that the restriction of $\iota_A$ to $V(A)$ is injective. 
Moreover $\iota_A$ is injective whenever $\pv V$ contains a semigroup with at least two elements, in which case we view $A$ as a subgraph of $\Om AV$ and $\iota_A$ as the inclusion mapping.

\begin{remark}
  The hypothesis that $A$ is finite-vertex is used in the construction of $\Om AV$.
  See~\cite{Almeida&ACosta:2007a} for the difficulties arising when dealing with infinite-vertex graphs.
\end{remark}

We have therefore a functor $F_{\pv V}$ from the category of finite-vertex graphs to the category of pro-$\pv V$ semigroupoids, given by $F_{\pv V}(A)=\Om AV$ for every finite-vertex graph $A$, and mapping each graph morphism $\varphi\from A\to B$ to the unique continuous homomorphism $F_{\pv V}(\varphi)\from \Om AV\to\Om BV$ such that Diagram~\eqref{eq:universal-property-functor}
commutes. In the absence of confusion, we may denote $F_{\pv V}(\varphi)$ simply by $\pro{\varphi}$.
\begin{equation}\label{eq:universal-property-functor}
\begin{split}
    \xymatrix@C=10mm@R=6mm{
      A \ar[r]^{\iota_{A}} \ar[d]_\varphi & \Om AV \ar[d]^{F_{\pv V}(\varphi)=\pro{\varphi}}\\
      B \ar[r]^{\iota_{B}} & \Om BV
    }
\end{split}
  \end{equation}

This discussion about free pro-$\pv V$ semigroupoids carries on, \emph{mutatis mutandis},
for every pseudovariety of semigroups $\pv V$ and every alphabet $A$, entailing
the existence of the free \emph{free pro-\pv{V} semigroup over $A$}. 

\begin{remark}
  \label{r:om-global-V-alphabet}
  When $\pv V$ is a pseudovariety of semigroups and $A$ is an alphabet, one has $\Om AV=\Om A{\glob V}$.
\end{remark}

Bear in mind Remark~\ref{r:om-global-V-alphabet}, as it allows in many instances
an immediate passage from the context of semigroupoids to that of semigroups.

\begin{remark}
  The notation $\Om AV$, due to Reiterman~\cite{Reiterman:1982}, has a strong presence in the literature and is related with the interpretation
  of the elements of $\Om AV$ as a certain kind of natural transformations called \emph{implicit operations}~\cite{Reiterman:1982,Almeida:1994a,Almeida&Weil:1996}. 
\end{remark}

We proceed to highlight some properties of relatively free profinite semigroupoids.

\begin{lemma}
  \label{l:factorization-of-finite-projection}
  Let $A$ be a finite-vertex graph and $\pv V$ be a pseudovariety of
  semigroupoids. If $\varphi\from \Om AV\to S$ is as continuous
  homomorphism with $S\in\pv V$, then there is a finite-index graph
  equivalence $\theta$ of $A$ and a continuous homomorphism $\psi\from
  \Om {A/{\theta}}V\to S$ such that $\varphi=\psi\circ \pro{q}_\theta$.
\end{lemma}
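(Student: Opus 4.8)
The plan is to take for $\theta$ the equivalence relation on $A$ that identifies two distinct elements of $A$ precisely when they are coterminal edges having the same image under the continuous graph morphism $\phi_0\from A\to S$ defined by $\phi_0=\varphi\circ\iota_A$. First I would check that $\theta$ is an equivalence relation (transitivity uses that coterminality is transitive) and, directly from its definition, that it is a graph equivalence: it identifies no two distinct vertices and only identifies coterminal edges. The point requiring care is that one should \emph{not} simply take the full kernel $\{(u,v)\in A\times A:\phi_0(u)=\phi_0(v)\}$, since in general that relation collapses distinct vertices (as $\varphi$ need not be injective on vertices) and identifies non-coterminal edges, hence is not a graph equivalence. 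Next, $\theta$ has finite index: its vertex classes are singletons and $V(A)$ is finite, while inside each of the finitely many hom-sets $A(x,y)$ the edges are split into at most $|E(S)|$ classes according to their $\phi_0$-image, and $S$ is finite. Moreover, since $\phi_0$, $\init$ and $\term$ are continuous, $S$ is discrete and $V(A)$ is finite, every $\theta$-class is clopen in $A$; hence $A/\theta$ is a finite graph and $q_\theta\from A\to A/\theta$ is a continuous quotient morphism, so that $\pro{q}_\theta:=F_{\pv V}(q_\theta)\from\Om AV\to\Om{A/\theta}V$ is defined.

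Since $\phi_0$ is constant on each $\theta$-class, it factors as $\phi_0=\overline{\phi_0}\circ q_\theta$ for a map $\overline{\phi_0}\from A/\theta\to S$, which is a graph morphism because $q_\theta$ is a quotient morphism and $\phi_0$ a graph morphism, and which is automatically continuous because $A/\theta$ is finite, hence discrete. As $S\in\pv V$ is in particular pro-$\pv V$, the universal property of $(\Om{A/\theta}V,\iota_{A/\theta})$ yields a continuous homomorphism $\psi\from\Om{A/\theta}V\to S$ with $\psi\circ\iota_{A/\theta}=\overline{\phi_0}$.

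It remains to check that $\varphi=\psi\circ\pro{q}_\theta$. Both sides are continuous homomorphisms from $\Om AV$ to the Hausdorff semigroupoid $S$, and $\om AV$ is dense in $\Om AV$; since $\om AV$ is generated by $\iota_A(A)$, it therefore suffices to verify that the two homomorphisms agree on $\iota_A(A)$. Using the commutativity of Diagram~\eqref{eq:universal-property-functor} for $q_\theta$, we compute
\[
  \psi\circ\pro{q}_\theta\circ\iota_A
  =\psi\circ\iota_{A/\theta}\circ q_\theta
  =\overline{\phi_0}\circ q_\theta
  =\phi_0
  =\varphi\circ\iota_A ,
\]
which is exactly what is needed. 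I do not expect a serious obstacle here: the argument is mostly bookkeeping, the only genuinely delicate step being the choice of $\theta$ as the ``graph-equivalence part'' of the kernel of $\varphi\circ\iota_A$ together with the verification that it has finite index and clopen classes — which is precisely where the hypotheses that $A$ is finite-vertex and that $S$ is finite enter.
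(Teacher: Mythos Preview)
Your proposal is correct and follows essentially the same approach as the paper: both define $\theta$ as the graph equivalence identifying coterminal edges with the same image under $\varphi\circ\iota_A$, factor this map through $q_\theta$, extend via the universal property of $\Om{A/\theta}V$, and conclude by checking agreement on $\iota_A(A)$ together with density. Your version is slightly more explicit about continuity of $q_\theta$ (via clopen classes) and about why the full kernel of $\varphi\circ\iota_A$ would not do, but the argument is the same.
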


\begin{proof}
  Let $\theta$ be the graph equivalence relation on $A$ such that, for
  every pair of coterminal edges $a,b$ of $A$, one has $a\mathrel{\theta}b$ if and only if
  $\varphi\circ\iota_A(a)=\varphi\circ\iota_A(b)$. Since $S$ is
  finite, the index of $\theta$ is finite. Set $B=A/{\theta}$. Then
  $\varphi\circ\iota_A$ factors as $\varphi\circ\iota_A=\pro{\psi}\circ
  q_\theta$ for a unique graph morphism $\psi\from B\to S$. Since
  $S\in\pv V$, there is a unique continuous homomorphism $\pro{\psi}\from
  \Om BV\to S$ such $\pro{\psi}\circ\iota_B=\psi$. We then have the following
  commutative diagram
  \begin{equation*}
    \xymatrix@R=5mm@C=5mm{
      A\ar[rr]^{\iota_A}\ar[dd]_{q_{\theta}}&&\Om AV\ar[dl]^\varphi\ar[dd]^{\pro{q}_\theta}\\
      &S&\\
      B\ar[ur]^{\psi}\ar[rr]_{\iota_B}&&\Om BV\ar[ul]_{\pro{\psi}}
    }
  \end{equation*}
  and so $\pro{\psi}\circ \pro{q_\theta}\circ\iota_A=\varphi\circ\iota_A$.
  As $\pro{\psi}\circ \pro{q_\theta}$ and $\varphi$ are both continuous homomorphisms
  and $\iota_A$ generates the topological semigroupoid $\Om AV$,
  it follows that $\pro{\psi}\circ \pro{q_\theta}=\varphi$.
\end{proof}

\begin{corollary}
  \label{c:separation-of-points}
  Let $A$ be a finite-vertex graph and $\pv V$ be a pseudovariety of
  semigroupoids. Let $u,v\in\Om AV$ be such that $u\neq v$.
  Then there is a finite-index graph equivalence $\theta$ of $A$ such
  that $\pro{q}_\theta(u)\neq \pro{q}_\theta(v)$.
\end{corollary}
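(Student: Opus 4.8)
The plan is to chain together the residual-$\pv V$ property of $\Om AV$ with the factorization Lemma~\ref{l:factorization-of-finite-projection}. First I would note that $\Om AV$ is a finite-vertex compact semigroupoid: its set of vertices is $V(A)$, which is finite by hypothesis, and it is pro-$\pv V$ by its very construction. Hence the (unlabeled) Proposition stated just before Proposition~\ref{p:recognition-pro-v-semigroupoid} applies and yields that $\Om AV$ is residually $\pv V$. Applying this to the pair $u\neq v$, we obtain a continuous homomorphism $\varphi\from\Om AV\to S$ with $S\in\pv V$ such that $\varphi(u)\neq\varphi(v)$.

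Next I would feed $\varphi$ into Lemma~\ref{l:factorization-of-finite-projection}: since $S\in\pv V$, there are a finite-index graph equivalence $\theta$ of $A$ and a continuous homomorphism $\psi\from\Om{A/{\theta}}{V}\to S$ with $\varphi=\psi\circ\pro{q}_\theta$. Then
\[
  \psi\bigl(\pro{q}_\theta(u)\bigr)=\varphi(u)\neq\varphi(v)=\psi\bigl(\pro{q}_\theta(v)\bigr),
\]
which forces $\pro{q}_\theta(u)\neq\pro{q}_\theta(v)$, as required.

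The only point demanding a moment's care — the closest thing here to an obstacle — is verifying that the hypotheses of the cited Proposition are in force, namely that $\Om AV$ is finite-vertex; this is precisely where the standing assumption that $A$ is a finite-vertex graph is used (compare the remark on the difficulties with infinite-vertex graphs). Beyond that, the argument is a direct composition of the two previously established results, so no genuine difficulty is anticipated.
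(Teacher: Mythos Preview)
Your proposal is correct and follows essentially the same approach as the paper: obtain a separating continuous homomorphism $\varphi\from\Om AV\to S$ with $S\in\pv V$ using that $\Om AV$ is pro-$\pv V$ (equivalently, residually $\pv V$), then factor it through some $\pro{q}_\theta$ via Lemma~\ref{l:factorization-of-finite-projection}. The paper's proof is just a terser version of exactly this argument.
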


\begin{proof}
  As $\Om AV$ is pro-$\pv V$, there is a continuous homomorphism $\varphi\from \Om AV\to S$, with $S\in\pv V$,
  such that $\varphi(u)\neq\varphi(v)$. We then apply Lemma~\ref{l:factorization-of-finite-projection}
  to~$\varphi$.
\end{proof}

Let $A$ be a finite-vertex graph.
Then we may consider the directed set $\Theta_A$ of graph equivalences of $A$ with finite index, ordered by reverse inclusion.
For every $\rho,\theta\in \Theta$ such that $\rho\supseteq \theta$, let $q_{\rho/{\theta}}$
be the natural mapping $A/{\theta}\to A/{\rho}$.
Then $\Theta$ is a directed set, and, by functoriality of $F_{\pv V}$,
Diagram~\ref{eq:inverse-limit-finite-retracts} commutes and we may consider the inverse limit $\varprojlim_{\theta\in\Theta}\Om {A/{\theta}}V$
where the homomorphisms of the form $\pro{q}_{\rho/{\theta}}$ are the connecting morphisms.
Hence, there is a continuous homomorphism
  $q\from \Om AV\to \varprojlim_{\theta\in\Theta}\Om {A/{\theta}}V$ such that
  $q(u)=(\pro{q}_\theta(u))_{\theta\in\Theta}$ for every $u\in \Om AV$.

  \begin{equation}\label{eq:inverse-limit-finite-retracts}
    \begin{split}
    \xymatrix@R=0.3 cm@C=1.5 cm{
      &\Om AV\ar[dr]^{\pro{q}_\theta}\ar[dl]_{\pro{q}_\rho}&\\
      \Om {A/{\rho}}V
      &
      &
      \Om {A/{\theta}}V. \ar[ll]^{\pro{q}_{\rho/{\theta}}}
        }
    \end{split}
  \end{equation}

\begin{corollary}
  \label{c:inverse-limit-finite-retracts}
  Let $A$ be finite-vertex graph and $\pv V$ be a pseudovariety of semigroupoids $\pv V$.
  Then $q\from \Om AV\to \varprojlim_{\theta\in\Theta}\Om {A/{\theta}}V$
  is a continuous isomorphism.
\end{corollary}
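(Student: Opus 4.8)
The plan is to show that $q$ is a continuous bijective homomorphism of compact semigroupoids, which then forces it to be a homeomorphism (a continuous bijection between compact Hausdorff spaces is a homeomorphism, and respects the algebraic structure by construction). Continuity of $q$ into the inverse limit holds because each component $\pro{q}_\theta$ is continuous, by the universal property of the inverse-limit topology; and $q$ is a homomorphism because each $\pro{q}_\theta$ is. Surjectivity onto the actual vertex sets and edge sets also needs the observation that each connecting map is a quotient, so the inverse limit is nonempty on each hom-set; but the real content is injectivity and surjectivity of $q$ as a map onto the whole inverse limit.

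First I would establish injectivity. If $u\neq v$ in $\Om AV$, then Corollary~\ref{c:separation-of-points} produces a finite-index graph equivalence $\theta$ with $\pro{q}_\theta(u)\neq\pro{q}_\theta(v)$, so the $\theta$-component of $q(u)$ differs from that of $q(v)$, whence $q(u)\neq q(v)$. This is the quick half.

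For surjectivity, the approach is the standard density-plus-compactness argument. The image $q(\Om AV)$ is a compact, hence closed, subsemigroupoid of $L=\varprojlim_{\theta\in\Theta}\Om {A/{\theta}}V$. It contains $q(\iota_A(A))$, which, I claim, generates a dense subsemigroupoid of $L$: for each $\theta$, the composite $A\xrightarrow{\iota_A}\Om AV\xrightarrow{\pro{q}_\theta}\Om{A/\theta}V$ is $\iota_{A/\theta}\circ q_\theta$, whose image generates a dense subsemigroupoid of $\Om{A/\theta}V$; since a subset of an inverse limit is dense as soon as all its projections are dense and the connecting maps are quotients (so onto), the subsemigroupoid generated by $q(\iota_A(A))$ is dense in $L$. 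Being closed and dense, $q(\Om AV)=L$, so $q$ is onto. The one point to be careful about is that $L$ really is a pro-$\pv V$ semigroupoid with finitely many vertices — this is immediate since $V(\Om{A/\theta}V)=V(A/\theta)$ is in bijection with $V(A)$ for every $\theta$, so the vertex part of $L$ is just $V(A)$, a finite set, and an inverse limit of pro-$\pv V$ semigroupoids is pro-$\pv V$.

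The main obstacle, such as it is, is the density claim for the generated subsemigroupoid of the inverse limit: one must check that generating a dense subsemigroupoid commutes with the inverse limit when the connecting morphisms are quotients. Concretely, given a basic open neighbourhood of a point $(u_\theta)_{\theta}\in L$ — determined by finitely many coordinates $\theta_1,\dots,\theta_k$, which by directedness lie below a common $\theta$ — one uses density in the single factor $\Om{A/\theta}V$ to find an element $w$ of the subsemigroupoid generated by $\iota_{A/\theta}(A/\theta)$ close to $u_\theta$, lifts $w$ to a word in $\iota_A(A)$ using that $q_\theta$ is onto, and checks that its image under $q$ lands in the prescribed neighbourhood; the compatibility of the $\pro q_{\rho/\theta}$ via Diagram~\eqref{eq:inverse-limit-finite-retracts} makes the other coordinates fall into line. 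Everything else is routine compactness bookkeeping.
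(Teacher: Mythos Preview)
Your proof is correct and follows the paper's approach: injectivity via Corollary~\ref{c:separation-of-points}, then surjectivity from the image being closed (compactness) together with each projection $\pro{q}_\theta$ being onto. The paper compresses the surjectivity half into a single line, citing the standard inverse-limit fact (Engelking, \S3.2) that a continuous map from a compact space into a directed inverse limit whose component maps are all onto is itself onto; your density-of-generators argument is a correct but more roundabout route to the same conclusion (and the side condition that the connecting maps be quotients is not actually needed for the density step).
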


\begin{proof}
  By Corollary~\ref{c:separation-of-points},
  the mapping $q$ is injective. Since $\pro{q}_\theta$ is onto for every
  $\theta\in\Theta$, by compactness it follows that $q$ is onto~\cite[Section 3.2]{Engelking:1989}, whence an isomorphism.
\end{proof}

Let $A$ be a graph. A \emph{retraction} of $A$ is an onto graph morphism $r\from A\to B$
for which there is a graph morphism $s\from B\to A$ such that $r\circ s=id_B$, in which case we say that $B$ is a \emph{retract} of $A$ and that $s$ is a \emph{section} (of $r$, or of $B$).

\begin{example}
  If $\theta$ is a graph equivalence on $A$, then the graph morphism
  $q_\theta\from A\to A/{\theta}$ is a retraction, with any graph homomorphism
  $s\from A/{\theta}\to A$ such that $s(a/\theta)\in a/\theta$ for
  every $a\in E(A)$ being a section of $r$.
\end{example}

Let $B$ be a subgraph of the graph $A$. We say that $B$ is a
\emph{retract subgraph} of $A$ if there is a retraction $r\from A\to
B$ admitting the inclusion $i\from B\to A$ as a section.

\begin{remark}
  \label{r:embedding-of-retract-subgraph}
  Let $B$ be a retract subgraph of a finite-vertex graph $A$. For the
  inclusion mapping $i\from B\to A$, we may then take a retraction
  $r\from A\to B$ such that $r\circ i=id_B$. By functoriality, the
  equality $\pro{r}\circ \pro{i}=id_{\Om BV}$ holds, whence the
  continuous homomorphism $\pro{i}\from \Om BV\to \Om AV$ is
  injective.
\end{remark}

If $i\from B\to A$ is not a retraction, then $\pro{i}$ may not be injective, as seen next.

\begin{example}
  Let $A$ be the graph consisting of two vertices $x,y$ and three edges
  $a,b,c$ such that $a,b\in A(x,y)$ and $c\in A(y,y)$. The subgraph
  $B$ obtained from $A$ by deleting $c$ is not a retract of $A$, 
  since any homomorphic image of $A$ must contain a loop. 
  On the other hand, let $\pv V$ be the class of all finite semigroupoids
  $S$ such that, for any triple of edges $(s,t,r)$ of $S$ satisfying
  $\init(s)=\init(t)$ and $\term(s)=\term(t)=\init(r)=\term(r)$, the
  equality $s=t$ holds. As the class $\pv V$ is really defined by a
  pseudoidentity in the sense of~\cite{Tilson:1987}, it is a
  pseudovariety of semigroupoids. Then $\Om BV=B$, while the graph
  $\Om AV$ is the quotient of $A$ obtained by identifying $a$ with
  $b$. Hence, it is impossible to find an injective graph homomorphism 
  from $\Om BV$ to $\Om AV$.
  \end{example}

When $B$ is a retract subgraph of a finite-vertex graph~$A$, we may see
$\Om BV$ as a closed subsemigroupoid of $\Om AV$ by means of
Remark~\ref{r:embedding-of-retract-subgraph}. If $\pv V$ contains
the pseudovariety $\pv{Sl}$ of all finite semilattices, then we have
the following stronger property.

\begin{proposition}
  \label{p:over-Sl}
  Let \pv V be a pseudovariety of semigroupoids, $A$ a finite-vertex graph, and
  $B$ a retract subgraph of $A$.
  If\/ $\pv V$ contains $\pv {Sl}$, then $\Om BV$ is open in $\Om AV$.

\end{proposition}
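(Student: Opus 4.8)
The plan is to reduce to a finite projection. Write $i\from B\to A$ for the inclusion, and fix a retraction $r\from A\to B$ with $r\circ i=\mathrm{id}_B$. By Remark~\ref{r:embedding-of-retract-subgraph} we already know $\pro i\from\Om BV\to\Om AV$ is a topological embedding, so it suffices to show that its image $\pro i(\Om BV)$ is open in $\Om AV$. Since $\pv V\supseteq\pv{Sl}$, the free semilattice on the one-vertex graph with two loops is in $\pv V$, so there is a continuous homomorphism $\chi\from\Om AV\to S$, with $S\in\pv V$, whose underlying construction ``flags'' which edges of $A$ actually appear: concretely, collapse $A$ by the graph equivalence $\theta$ identifying all edges of $B$ into one loop and all edges of $E(A)\setminus E(B)$ into another loop (at each relevant vertex), then map $\Om{A/\theta}V$ onto the relevant finite semilattice reflect semigroupoid in which ``uses only $B$-edges'' is a clopen condition. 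Alternatively, and more cleanly, take the quotient morphism $\rho\from A\to C$ collapsing every edge of $E(A)\setminus E(B)$ to a chosen loop at an adjoined vertex (or, if $B$ already meets every vertex, a loop at that vertex), keeping the $B$-edges distinct; this is a graph morphism, so by functoriality we get $\pro\rho\from\Om AV\to\Om CV$.

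The key point will be identifying $\pro i(\Om BV)$ as the preimage under some finite, or finitely-determined, map of a clopen set. I would argue as follows. First reduce via Corollary~\ref{c:inverse-limit-finite-retracts}, which presents $\Om AV$ as the inverse limit $\varprojlim_{\theta\in\Theta_A}\Om{A/\theta}V$; openness of a subset can be checked on a cofinal system of finite-index quotients, and one checks that for graph equivalences $\theta$ that do not merge $B$-edges with non-$B$-edges, the retraction $r$ descends to $A/\theta$ and $\Om{B/\theta'}V$ sits as the image of $\pro i$ in the corresponding quotient. Then the task becomes: in each finite $\Om{A/\theta}V$, show that the subsemigroupoid generated by the image of $B$ is open, equivalently clopen, equivalently (by Proposition~\ref{p:recognition-pro-v-semigroupoid}) recognised by a continuous quotient homomorphism into $\pv V$. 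Here is where $\pv{Sl}\subseteq\pv V$ enters decisively: the ``content'' homomorphism into the power-set semilattice $\mathcal P(E(A))$ (under union), restricted to edges not in $B$, detects precisely whether a pseudopath uses any edge outside $B$; the set of pseudopaths of content contained in $E(B)$ is clopen, and — crucially — this set is exactly the closure of the subsemigroupoid generated by $B$, which by the retract hypothesis coincides with $\pro i(\Om BV)$.

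So the steps in order are: (1) record that $\pro i$ is an embedding and reduce the claim to openness of its image; (2) construct the content/semilattice homomorphism $c\from\Om AV\to T$ with $T\in\pv V$ built from $\pv{Sl}$, detecting the presence of non-$B$-edges; (3) show $c^{-1}$ of the appropriate (clopen) subset of $T$ equals $\pro i(\Om BV)$, using the retraction $r$ to get the nontrivial inclusion ``content in $E(B)$ implies lies in the $B$-generated part'' (the section gives one inclusion for free, since $B$-edges obviously have content in $E(B)$; the other uses that applying $\pro r$ then $\pro i$ is the identity on the $B$-part and annihilates the rest); (4) conclude openness by Proposition~\ref{p:recognition-pro-v-semigroupoid}. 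The main obstacle is step~(3): one must be careful that ``generated by $B$'' and ``content in $E(B)$'' genuinely agree \emph{after taking topological closure} in $\Om AV$ — i.e.\ that a net of paths over $B$ cannot converge to a pseudopath that, while having content in $E(B)$, fails to lie in $\pro i(\Om BV)$ — and here the clean resolution is to observe that $\pro i(\Om BV)$ is closed (image of a compact set) and contains $\om BV$, while $c^{-1}(\text{content}\subseteq E(B))$ is the closure of $\om BV$ because $c$ is continuous and its value on a path is literally the set of edges used; matching these two closed sets gives the equality and hence the result.
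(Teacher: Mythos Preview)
Your core idea matches the paper's exactly: map into a semilattice so as to detect whether any non-$B$-edge is used, and show the ``pure $B$'' locus is precisely $\Om BV$. The paper simply takes the two-element semilattice $T=\{0,1\}$ (with $0$ absorbing), sends edges of $B$ to $1$ and all other edges to $0$, extends to $\hat\chi\from\Om AV\to T$, and shows $\hat\chi^{-1}(1)=\Om BV$; since $T$ is finite, this preimage is clopen and you are done.

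Your execution, though, has a real gap when $E(A)\setminus E(B)$ is infinite (recall $A$ is finite-\emph{vertex}, not finite). The content map you eventually settle on lands in $\mathcal P(E(A))\cong\{0,1\}^{E(A)}$, which is only pro-$\pv{Sl}$, and the set $\{X:X\subseteq E(B)\}$ is closed but \emph{not open} there when $E(A)\setminus E(B)$ is infinite. Your ``clean resolution'' silently uses openness of $c^{-1}(\text{content}\subseteq E(B))$ twice: once to pass from $\om AV\cap c^{-1}(\ldots)=\om BV$ to $c^{-1}(\ldots)\subseteq\overline{\om BV}$ via density of $\om AV$, and once at the end to conclude $\Om BV$ is open from the equality $c^{-1}(\ldots)=\Om BV$. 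The fix is precisely the paper's move (which you yourself hint at earlier with ``two loops''): replace $\mathcal P(E(A))$ by the two-element semilattice. Two minor points: the ``graph equivalence collapsing all $B$-edges to one loop and all non-$B$-edges to another'' is not a graph equivalence unless those edges are coterminal---what you want is a graph \emph{morphism} to a one-vertex target; and the retraction $r$ is used only to ensure $\pro i$ embeds $\Om BV$ (so that $\Om BV=\overline{\om BV}$ inside $\Om AV$), not in the openness argument itself.
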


\begin{proof}
  Consider the two-element semilattice $T=\{0,1\}$, where $0$ is a zero.
  Let $\chi\from A\to T$ be the graph morphism
  defined by
  \begin{equation*}
    \chi(a)=
    \begin{cases}
      1&\text{if $a\in E(B)$},\\
      0&\text{if $a\in E(A)\setminus E(B)$}.
    \end{cases}
  \end{equation*}  
  As $\pv V\supseteq \pv{Sl}$, we may consider
  the unique continuous homomorphism $\hat{\chi}\from\Om AV\to T$
  extending $\chi$. We claim that $\hat{\chi}^{-1}(1)=\Om BV$, which shows that $\Om BV$~is open. The inclusion $\Om BV\subseteq\hat{\chi}^{-1}(1)$ follows
  from $\hat{\chi}^{-1}(1)$ being a closed subsemigroupoid of~$\Om AV$ containing~$B$.
  For the reverse inclusion, note that
  every edge of $\om AV$ having an element of $E(A)\setminus E(B)$ as
  factor is mapped by $\chi$ to $0$,
  whence $\om AV\cap\hat{\chi}^{-1}(1)\subseteq \om BV$;
  as $\hat{\chi}^{-1}(1)$ is open and $\om AV$ is dense in $\Om AV$,
  it follows that $\hat{\chi}^{-1}(1)\subseteq\Om BV$.
\end{proof}

\begin{remark}
    In the special case where $A$ is a set viewed as a one-vertex graph, any subset of $A$ is a retract. By Remark~\ref{r:om-global-V-alphabet}, it immediately follows from the previous result that if $\pv V$ is a pseudovariety of semigroups containing $\pv{Sl}$ and $A$ is any alphabet, then $\Om BV$ is an open subsemigroup of $\Om AV$ for every $B\subseteq A$.
\end{remark}

Moving forward, we focus on pseudovarieties containing the pseudovariety $\pv N$ of
all finite nilpotent semigroups. If $A$ is an alphabet, then $\Om AN$ is
the one-point compactification of the discrete space $A^+$, the
semigroup structure of $\Om AN$ extending $A^+$ by letting the point
at infinite be a zero~\cite[Section~3.7]{Almeida:1994a}. If $A$ is
singleton, then $A^+$ is isomorphic to the additive semigroup $\nn_+$
of positive integers, and $\Om AN$ becomes the one-point
compactification $\nn_+\cup\{\infty\}$, extending $\nn_+$ with
$\infty$ as a zero for the semigroup operation.

\begin{example}
  \label{ex:free-nilpotent-retract-subgraph}
    Let $A$ be an alphabet with at least two letters. Then the free pro-\pv N semigroup $\Om AN$ contains one isomorphic copy of $\nn_+\cup\{\infty\}$ for each letter $a\in A$, since $\{a\}\subseteq A$ is a retract subgraph when $\{a\}$ and $A$ are viewed as one-vertex graphs. All of these copies of $\nn_+\cup\{\infty\}$ intersect pairwise in the single point $\{\infty\}$, and thus they are closed but not open in $\Om AN$. This shows that the conclusion of Proposition~\ref{p:over-Sl} fails when $\pv V = \pv N$.
\end{example}

A proof of the next proposition may be found in~\cite[Proposition 3.12]{Almeida&ACosta:2007a}.

\begin{proposition}
  \label{p:A+-embeds-isolated}
    Let $A$ be a finite-vertex graph and \pv V be a pseudovariety of
    semigroupoids containing \pv N. The unique homomorphism $A^+\to \Om AV$
    extending the mapping $\iota_A\from A\to\Om AV$ is injective, and the elements in its image are isolated points of $\Om AV$.
  \end{proposition}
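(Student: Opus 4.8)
My plan is to produce, for each path $w\in E(A^+)$, one continuous homomorphism from $\Om AV$ onto a finite semigroupoid in $\pv V$ that separates the image of $w$ from the images of all other paths and at the same time exhibits that image as an isolated point; this gives injectivity and the isolatedness claim together. Write $\psi\from A^+\to\Om AV$ for the homomorphism extending $\iota_A$. I will use that the edges of the dense subsemigroupoid $\om AV$ are exactly the elements $\psi(p)$ with $p\in E(A^+)$, so that, $E(\Om AV)$ being clopen in $\Om AV$, this set is dense in $E(\Om AV)$.

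Fixing $w=a_0\cdots a_{n-1}$, I first set $E_0=\{a_0,\dots,a_{n-1}\}$, let $B$ be the subgraph of $A$ with $V(B)=V(A)$ and $E(B)=E_0$, and build a finite semigroupoid $S_w$ with vertex set $V(A)$, whose edges are the (finitely many) paths in $B$ of length at most $n$ together with one further edge $0_{x,y}$ in each hom-set $S_w(x,y)$, and whose composition concatenates short paths when the outcome still has length at most $n$ and otherwise returns the appropriate $0_{x,y}$ --- essentially the Rees quotient of $B^+$ by the ideal of paths of length greater than $n$. The step I expect to be the main obstacle is checking that $S_w\in\pv V$. For this I would verify that sending a short path to the word over $E_0$ spelled by its edge sequence, and each $0_{x,y}$ to $0$, defines a faithful homomorphism from $S_w$ to the finite nilpotent semigroup of nonempty words over $E_0$ of length at most $n$ with an adjoined zero and concatenation truncated at length $n$; hence $S_w$ divides a member of $\pv N$, so $S_w\in\glob\pv N\subseteq\pv V$. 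This is the only place the hypothesis $\pv N\subseteq\pv V$ enters, and the only step that is not routine.

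Next I take the graph morphism $\chi_w\from A\to S_w$ which is the identity on vertices, fixes every edge of $E_0$ (viewed as a length-one path), and sends every edge $a\in E(A)\setminus E_0$ to $0_{\init(a),\term(a)}$, and let $\hat{\chi}_w\from\Om AV\to S_w$ be its continuous extension. For a path $v=c_0\cdots c_{m-1}$ one has $\hat{\chi}_w(\psi(v))=\chi_w(c_0)\cdots\chi_w(c_{m-1})$: this is a zero edge of $S_w$ when some $c_j\notin E_0$, and otherwise it is the element of $S_w$ represented by $v$, which equals the length-$n$ path $w\in S_w$ exactly when $v=w$. So $\hat{\chi}_w(\psi(v))=w$ forces $v=w$; as $\hat{\chi}_w(\psi(w))=w$, this already proves $\psi$ injective.

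Finally, for isolatedness I observe that $U:=\hat{\chi}_w^{-1}(w)$ is open in $\Om AV$ (preimage of an isolated point of $S_w$) and lies inside the clopen set $E(\Om AV)$. By the previous step, $U$ meets $\psi(E(A^+))=E(\om AV)$ only in $\psi(w)$; since $E(\om AV)$ is dense in $E(\Om AV)$, the set $U\cap E(\om AV)$ is dense in $U$, hence its closure $\{\psi(w)\}$ --- a single, thus closed, point --- contains $U$. Therefore $U=\{\psi(w)\}$ is open, that is, $\psi(w)$ is an isolated point of $\Om AV$.
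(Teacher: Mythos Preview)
The paper does not include its own proof of this proposition; it simply refers the reader to Proposition~3.12 of~\cite{Almeida&ACosta:2007a}. Your argument is correct and self-contained, and in spirit it is the natural one: separate a given path $w$ using a finite $\glob\pv N$-semigroupoid built from the Rees quotient of $B^+$ by the ideal of paths of length greater than $|w|$, padded with zeros so that every hom-set is nonempty and $\chi_w$ can be defined on all of~$A$.

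All steps check. Associativity in $S_w$ holds because the zero appearing in any product is determined solely by the source and target of the outer factors. The map $S_w\to N$ you describe is a faithful homomorphism, so $S_w$ divides a member of~$\pv N$ and hence lies in $\glob\pv N\subseteq\pv V$. The computation showing $\hat\chi_w(\psi(v))=w\iff v=w$ is correct and yields injectivity of~$\psi$; the density argument for isolatedness is also fine, using that $E(\Om AV)$ is clopen (since $V(\Om AV)$ and $E(\Om AV)$ partition $\Om AV$ into closed sets) and that $E(\om AV)$ is dense there. One purely cosmetic remark: the phrase ``concatenation truncated at length $n$'' could be misread as ``take the length-$n$ prefix'', which would give the non-nilpotent semigroup $\Om{E_0}{K_n}$; you mean that products exceeding length $n$ collapse to $0$, as is clear from your calling the resulting semigroup nilpotent.
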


  For a subset $X$ of $\Om AV$, we use the notation
 $\clos V(X)$ for the topological closure of $X$ in $\Om AV$.
 In view of Proposition~\ref{p:A+-embeds-isolated}, when $\pv V$ is a
 pseudovariety of semigroupoids containing~$\pv N$, we regard $A^+$ as
 a subsemigroupoid of $\Om AV$ which is a discrete dense subspace of
 $\Om AV$. Hence, for such $\pv V$ and $A$, the following observation
 arises: for every language $L\subseteq A^+$, the equality $\clos
 V(L)\cap A^+=L$ holds. Proposition~\ref{p:A+-embeds-isolated} and
 these considerations motivate calling edges of $\Om AV$
 \emph{pseudopaths}.

  We say that a subgraph $F$ of a semigroupoid $S$ is \emph{factorial in $S$}
  if the inclusion $m^{-1}(F)\subseteq F\times F$ holds, where $m\from S\times S\to S$ denotes the multiplication mapping of $S$.

\begin{corollary}
  \label{c:A+-as-filter}
   Let $A$ be a finite-vertex graph and \pv V be a pseudovariety of
   semigroupoids containing \pv N. Then  $A^+$ is factorial in $\Om AV$.
\end{corollary}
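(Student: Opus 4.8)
The statement to prove is that $m^{-1}(A^+)\subseteq A^+\times A^+$, where $m\from D(\Om AV)\to\Om AV$ is the (continuous) multiplication of $\Om AV$ viewed as a compact semigroupoid. The plan is to determine, for each finite path $w\in A^+$, the entire fibre $m^{-1}(\{w\})$: I will show it is precisely the set of factorizations of $w$ into two nonempty paths, which is a finite subset of $A^+\times A^+$. Taking the union over $w\in A^+$ then gives the assertion.

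First I would set up the two topological ingredients. By Proposition~\ref{p:A+-embeds-isolated} (and the discussion preceding this corollary), $A^+$ --- identified with $\om AV$ --- is a dense subspace of $\Om AV$ all of whose points are isolated; in particular $\{w\}$ is open, so by continuity of $m$ the set $m^{-1}(\{w\})$ is open in $D(\Om AV)$. Secondly, the set $D(A^+)$ of composable pairs of finite paths is dense in $D(\Om AV)$. Granting both, $m^{-1}(\{w\})\cap D(A^+)$ is dense in the open set $m^{-1}(\{w\})$. On the other hand, since the multiplication of the free semigroupoid $A^+$ is concatenation of paths, writing $w=a_0\cdots a_{n-1}$ with $a_i\in E(A)$ and $n\ge 1$ one checks that
\[
  m^{-1}(\{w\})\cap D(A^+)=\{(a_0\cdots a_{k-1},\,a_k\cdots a_{n-1}):1\le k\le n-1\},
\]
a finite set, empty when $n=1$. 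A finite subset of the Hausdorff space $D(\Om AV)$ is closed, so this set coincides with its closure, which is $m^{-1}(\{w\})$; hence $m^{-1}(\{w\})\subseteq A^+\times A^+$. (The degenerate case $n=1$ is subsumed: then $m^{-1}(\{w\})=\emptyset$, expressing that a length-one path admits no factorization into two paths.)

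The one point that genuinely requires an argument --- and the only place where the finite-vertex hypothesis on $A$ enters --- is the density of $D(A^+)$ in $D(\Om AV)$. Given $(x,y)\in D(\Om AV)$, set $p=\init(x)=\term(y)$ and pick nets $u_i\to x$ and $v_j\to y$ in $A^+$, which exist since $A^+$ is dense. The pairs $(u_i,v_j)$ converge to $(x,y)$ but are not a priori composable; to repair this, observe that $\init(u_i)\in V(A)$ because $u_i$ is a finite path, while $V(A)$ is finite, hence closed and discrete in the Hausdorff space $\Om AV$. Continuity of $\init$ then forces $p\in V(A)$ and $\init(u_i)=p$ for all large $i$, and symmetrically $\term(v_j)=p$ for all large $j$; thus $(u_i,v_j)\in D(A^+)$ for all large $(i,j)$. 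Everything else follows from the quoted results and the elementary combinatorics of concatenation, so I anticipate no further obstacle.
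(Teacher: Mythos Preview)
Your argument is correct and follows essentially the same route as the paper's proof: show that each fibre $m^{-1}(\{w\})$ is open, observe that its intersection with the composable finite paths is finite (hence closed), and conclude by density. The only cosmetic difference is in how the finite-vertex hypothesis is invoked: the paper notes that $D(\Om AV)$ is clopen in $\Om AV\times\Om AV$ and uses density of $A^+\times A^+$ in the full product, whereas you prove directly that $D(A^+)$ is dense in $D(\Om AV)$; these are equivalent observations.
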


\begin{proof}
  Take $u\in E(A^+)$.
  Let $m$ be the multiplication mapping
  of $\Om AV$. 
  By Proposition~\ref{p:A+-embeds-isolated},
  the set $\{u\}$ is clopen. Hence, as~$m$ is continuous and $D(\Om
  AV)$ is clopen in $\Om AV$ because $A$~ is a finite-vertex graph,
  the set $m^{-1}(u)$ is clopen in $\Om AV\times\Om AV$.
  As $A^+\times A^+$ is dense in $\Om AV\times\Om AV$,
  it follows that $m^{-1}(u)$ is the topological closure
  of the intersection $m^{-1}(u)\cap A^+\times A^+$.
  Such intersection is finite, thus closed, whence $m^{-1}(u)\subseteq A^+\times A^+$.
\end{proof}

Let $\pv V$ be a pseudovariety of semigroupoids containing $\pv N$ and
$A$ be a finite-vertex graph. Since $\nn_+\cup\{\infty\}$ is pro-$\pv
V$, we may consider the unique continuous homomorphism $\ell_A\from
\Om AV\to \nn_+\cup\{\infty\}$ such that $\ell_A(a)=1$ for all $a\in
E(A)$. Note that if $u$ is a path over~$A$, then~$\ell_A(u)=|u|$. More
generally, for each pseudopath $u\in\Om AV$, we say that $\ell_A(u)$
is the \emph{length} of $u$, and use the notation $\ell_A(u)=|u|$. We
extend this to empty pseudopaths of $(\Om AV)^I$ by letting
$\ell_A(u)=0$ whenever $u$ is an empty pseudopath. Moreover, we say
that a pseudopath $u\in(\Om AV)^I$ has finite length, or is a
\emph{finite-length pseudopath}, if $\ell_A(u)\in\nn$; otherwise, that
is, when $\ell_A(u)=\infty$, we say that it has infinite length, or
that it is an \emph{infinite-length pseudopath}.

\begin{remark}
  \label{r:infinite-length-pseudowords-form-ideal}
  As $\infty$ is a zero of $\nn_+\cup\{\infty\}$,
  the infinite-length pseudopaths of $\Om AV$ form an ideal of $\Om AV$,
  whenever $\pv N\subseteq \pv V$, for any finite-vertex graph $A$.
    In other words, the set of finite-length pseudopaths is factorial.
\end{remark}

In the case of finite graphs, we have the following stronger property.

\begin{proposition}
  \label{p:finite-length-pseudopaths-in-finite-graph-case}
  If\/ $\pv V$ is a pseudovariety of semigroupoids containing $\pv N$ and $A$ is a finite graph, then a pseudopath
  of\/ $\Om AV$
  has finite length if and only if it is in $A^+$.
\end{proposition}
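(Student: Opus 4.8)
The plan is to establish the non-obvious implication, the converse being immediate: a path in $A^+$ of length $n$ satisfies $\ell_A=n\in\nn$, so it has finite length. Thus I would start from a pseudopath $u\in\Om AV$ with $n:=\ell_A(u)$ finite, that is $n\in\nn_+$, and aim to show $u\in E(A^+)$.

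The first step is to record that $\{n\}$ is clopen in $\nn_+\cup\{\infty\}$: this space is the one-point compactification of the discrete space $\nn_+$, so $\{n\}$ is open, and its complement $(\nn_+\setminus\{n\})\cup\{\infty\}$ is the union of an open subset of $\nn_+$ with a basic neighbourhood of $\infty$, hence open as well. Since $\ell_A\from\Om AV\to\nn_+\cup\{\infty\}$ is continuous (here the hypothesis $\pv N\subseteq\pv V$ is used), the set $U:=\ell_A^{-1}(n)$ is a clopen, in particular compact, subset of $\Om AV$ with $u\in U$.

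The second step is where the hypothesis that $A$ is a \emph{finite} graph---not merely finite-vertex---enters. Since $E(A)$ is finite, there are only finitely many paths over $A$ of length $n$ (at most $|E(A)|^n$ of them); call $P_n$ this finite set, and note that $P_n=E(A^+)\cap U$ by the definition of $\ell_A$ on paths. In the final step I would invoke Proposition~\ref{p:A+-embeds-isolated} to regard $A^+$ as the subsemigroupoid $\om AV$ of $\Om AV$, which is dense in $\Om AV$. As $U$ is open, $P_n=E(A^+)\cap U$ is dense in $U$; but $P_n$, being finite, is closed, so $U\subseteq\clos V(P_n)=P_n$, and since trivially $P_n\subseteq U$ we get $U=P_n\subseteq E(A^+)$. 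In particular $u\in E(A^+)$, as wanted.

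I do not foresee a genuine obstacle: the whole argument is the familiar interplay between clopenness of level sets of $\ell_A$ and density of $\om AV$. The single point that deserves care is that finiteness of the \emph{edge} set of $A$ is indispensable in the second step: for a finite-vertex graph with infinitely many edges, $U=\ell_A^{-1}(1)$ is compact yet contains the infinite set $E(A)$, which by Proposition~\ref{p:A+-embeds-isolated} consists of isolated points of $\Om AV$; such a set must have cluster points in $U$ outside $A^+$, so the statement would then fail (a length-one pseudopath not in $A^+$).
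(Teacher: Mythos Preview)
Your proof is correct and relies on the same ingredients as the paper's: density of $A^+$ in $\Om AV$, continuity of $\ell_A$, and finiteness of the set of paths of a given length over a finite graph. The only difference is organizational---the paper proves the contrapositive by taking a net in $A^+$ converging to a pseudopath $u\notin A^+$ and arguing that the lengths along the net must be unbounded (hence $\ell_A(u)=\infty$), whereas you show directly that the clopen fiber $\ell_A^{-1}(n)$ equals the finite set $P_n$---but the substance is the same.
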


\begin{proof}
  The ``if'' part is trivial. Conversely, take a pseudopath $u\in\Om AV$ not in $A^+$.
  As $A^+$ is dense in $\Om AV$, there is a net $(u_i)_{i\in I}$
  of elements of $E(A^+)$ converging to $u$. Since $u\notin A^+$, such net has infinitely many distinct values. Hence, as $A$ is finite, the net
  $(\ell_A(u_i))_{i\in I}$ is unbounded. By continuity of $\ell_A$, it follows that $\ell_A(u)=\infty$.
\end{proof} 

\begin{remark}
  \label{r:pseudo-letters}
  The assumption that $A$ is a finite graph cannot be dropped in the previous proposition.
  If\/ $\pv V$ is a pseudovariety of semigroupoids containing $\pv N$ and $A$ is any finite-vertex graph, then $\clos V(A)=\ell_A^{-1}(1)$.
  In particular, as the elements of $A$ are isolated, if $A$ is infinite then there are pseudowords of length~$1$ not in $A^+$.
\end{remark}

What we saw for pseudovarieties of semigroupoids containing $\pv N$ carries on \emph{mutatis mutandis}
for pseudovarieties of semigroups containing $\pv N$. For such a pseudovariety of semigroups $\pv V$, and an alphabet $A$,
the pseudopaths of $\Om AV$ are called \emph{pseudowords}, as they generalize the words in the free semigroup $A^+$.

Given a finite-vertex graph $A$, let $\gamma_A^{\pv V}$, or simply $\gamma$, be the unique continuous homomorphism $\Om A{\glob V}\to \Om {E(A)}V$
extending the graph morphism $A\to E(A)$ which collapses all vertices
and restricts to the identity on $E(A)$.

\begin{proposition}
  \label{p:faithfulness}
  Let $A$ be a finite-vertex graph, and $\pv V$ be a pseudovariety of semigroups. Then $\gamma\from \Om A{\glob V}\to\Om {E(A)}V$ is faithful.
\end{proposition}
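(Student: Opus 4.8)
\textit{Strategy.} The plan is to reduce faithfulness of $\gamma$ to a separation statement about finite semigroups. Since $\Om {E(A)}V$ has a single vertex and $\gamma$ collapses the vertices of $\Om A{\glob V}$, the map $\gamma$ sends each hom-set $\Om A{\glob V}(x,y)$ into $\Om {E(A)}V$, so faithfulness says precisely that two coterminal pseudopaths $u,v$ with $\gamma(u)=\gamma(v)$ must coincide. As $\Om A{\glob V}$ is a finite-vertex pro-$\glob\pv V$ semigroupoid, it is residually $\glob\pv V$; hence it suffices to prove that coterminal $u\neq v$ are always separated by a continuous homomorphism into some finite semigroup of $\pv V$. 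Granting this, fix such a homomorphism $\phi$: since its codomain has one vertex, $\phi\circ\iota_A$ collapses the vertices of $A$ and therefore factors through the vertex-collapsing graph morphism $A\to E(A)$; the universal property of $\Om {E(A)}V$ then produces a continuous homomorphism $\rho$ with $\phi=\rho\circ\gamma$, whence $\rho(\gamma(u))=\phi(u)\neq\phi(v)=\rho(\gamma(v))$ and $\gamma(u)\neq\gamma(v)$.

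\textit{The separation step.} Take coterminal $u,v\in\Om A{\glob V}(x,y)$ with $u\neq v$ and, using residual finiteness, a continuous homomorphism $\psi\from\Om A{\glob V}\to S$ with $S$ a finite semigroupoid in $\glob\pv V$ and $\psi(u)\neq\psi(v)$. By Tilson's characterization of the global (cf.~\cite{Tilson:1987,Almeida&Weil:1996}), $S$ divides a finite semigroup $T\in\pv V$. Let $\widetilde T$ be the semigroupoid with vertex set $V(S)$ whose hom-sets are all copies of $T$, with composition induced by the product of $T$; the forgetful homomorphism $\widetilde T\to T$ (collapse vertices, identity on each hom-set) is faithful, so $\widetilde T$ divides $T$ and hence $\widetilde T\in\glob\pv V$. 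A routine check shows that the division of $S$ by $T$ can be witnessed by a subsemigroupoid $R$ of $\widetilde T$: one has a quotient homomorphism $q\from R\to S$ together with the faithful homomorphism $f\from R\to T$ obtained by restricting the forgetful map. In particular $R\in\glob\pv V$, so $R$ is pro-$\glob\pv V$.

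\textit{Lifting.} Since $q$ is a bijection on vertices and onto on hom-sets, the graph morphism $\psi\circ\iota_A\from A\to S$ admits a lift $\mu\from A\to R$ (lift vertices uniquely, choose edge preimages in the appropriate hom-sets; the choice can be made continuous because $A$ is finite-vertex). Extending $\mu$ through the universal property of $\Om A{\glob V}$ gives a continuous homomorphism $\pro{\mu}\from\Om A{\glob V}\to R$ with $q\circ\pro{\mu}=\psi$. Then $\pro{\mu}(u)$ and $\pro{\mu}(v)$ are coterminal in $R$ and distinct, since $q$ separates them; as $f$ is faithful, $f(\pro{\mu}(u))\neq f(\pro{\mu}(v))$. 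Thus $f\circ\pro{\mu}\from\Om A{\glob V}\to T$ is a continuous homomorphism into a finite semigroup of $\pv V$ separating $u$ from $v$, which finishes the argument.

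\textit{Main obstacle.} The delicate point is the lifting step: extending a graph morphism $A\to R$ to $\Om A{\glob V}$ requires $R$ to be pro-$\glob\pv V$, whereas an arbitrary witness of a division of semigroupoids need not lie in $\glob\pv V$. Realizing the witness inside the blow-up $\widetilde T$ is exactly what removes this difficulty, since $\widetilde T$ ---~and hence every subsemigroupoid of it~--- belongs to $\glob\pv V$. The remaining ingredients, namely the factorization of semigroup-valued homomorphisms through $\gamma$ and the continuity of the lift, are routine.
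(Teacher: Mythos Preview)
The paper does not give its own proof of this proposition; it cites \cite[Proposition~2.3]{Almeida:1996c} and \cite[Theorem~8.4]{Jones:1996}. Your argument is correct and follows essentially the standard route found in those references: separate $u\neq v$ by a finite semigroupoid $S\in\glob\pv V$, pass through a division witness to reach a finite semigroup $T\in\pv V$, and then observe that any semigroup-valued continuous homomorphism from $\Om A{\glob V}$ factors through~$\gamma$.

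One remark: the ``main obstacle'' you flag is not really an obstacle, and the blow-up $\widetilde T$ is unnecessary (though harmless). You worry that an arbitrary witness $R$ of the division of $S$ by $T$ might fail to lie in $\glob\pv V$. But by the definition of division used in the paper, such an $R$ comes equipped with a \emph{faithful} homomorphism into $T$; composing with the identity quotient $R\to R$ shows that $R$ itself divides $T\in\pv V\subseteq\glob\pv V$, and hence $R\in\glob\pv V$ automatically. You can therefore skip the $\widetilde T$ construction entirely and lift directly through any chosen witness~$R$. Likewise, the parenthetical about continuity of the lift $\mu\from A\to R$ is a non-issue: in the paper's conventions $A$ is a plain (discrete) graph, so every graph morphism out of~$A$ is continuous; the finite-vertex hypothesis plays no role at that point.
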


Proposition~\ref{p:faithfulness} is shown in~\cite[Proposition~2.3]{Almeida:1996c}
and~\cite[Theorem~8.4]{Jones:1996}. In the former paper, it is stated under the assumption that $A$ is finite, and in both papers in the setting of \emph{pseudovarieties of categories}.

For a set $Q$, consider the \emph{(aperiodic) Brandt semigroup} $B_Q$:
the underlying set of $B_Q$ is $(Q\times Q)\cup\{0\}$, with $0$ an
element not in $Q$ which is a zero for $B_Q$, and multiplication of
elements of $Q\times Q$ given by $(p,r)\cdot (s,q)=0$ if $r\neq s$ and
$(p,r)\cdot (r,q)=(p,q)$. If $Q$ and $R$ have finite cardinal $n$,
then $B_Q\cong B_R$, justifying the notation $B_n$ for a
representative of the isomorphism class of $B_Q$. It is well known
that, for a pseudovariety of semigroups $\pv V$, one has $B_2\in\pv V$
if and only if $B_Q\in\pv V$ for every finite set $Q$. Parts of the
following proposition are spread in~\cite{Jones:1996}.

\begin{proposition}
  \label{p:faithful-open-case-B2}
  Let $A$ be a finite-vertex graph, and $\pv V$ be a pseudovariety of semigroups containing $B_2$.
  Then  the following properties hold.
  \begin{enumerate}
  \item The image $\gamma(\Om A{\glob V})$ is an open factorial subset of $\Om {E(A)}V$.\label{item:faithful-open-case-B2-1}
  \item The homomorphism $\gamma$ is an open mapping whose restriction to $E(\Om A{\glob V})$ is a topological embedding.\label{item:faithful-open-case-B2-2}
  \item For all $x,y,z\in E(\Om A{\glob V})$, if the equality $\gamma(x)\cdot \gamma(y)=\gamma(z)$ holds,
  then we have $(x,y)\in D(\Om A{\glob V})$ and $xy=z$.\label{item:faithful-open-case-B2-3}
  \end{enumerate}
\end{proposition}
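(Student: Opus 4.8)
The plan is to encode the finite-vertex graph $A$ inside a suitable Brandt-type construction over $E(A)$, using the vertices of $A$ as Brandt coordinates. Concretely, let $Q=V(A)$ and consider the map $A\to B_Q$ sending a vertex $x$ to $(x,x)$ and an edge $a$ with $\init(a)=x$, $\term(a)=y$ to $(y,x)$ (matching the category-theoretic convention for composition used in this paper). Since $V(A)$ is finite, $B_Q\in\pv V$, so this extends to a continuous homomorphism $\beta\from\Om A{\glob V}\to B_Q$. The key observation is that $\beta$ factors through $\gamma$: the map $\beta$ is determined on edges by $\init$ and $\term$, and these data depend only on $\gamma(a)$ together with the position of $a$ in a product. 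More usefully, there is a continuous homomorphism $\delta\from\Om{E(A)}V\to B_Q$ extending the same rule on $E(A)$, and $\delta\circ\gamma=\beta$. This $\delta$ is the coordinate-tracking device that will let me detect composability.

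First I would prove (iii), which drives the other two parts. Given $x,y,z\in E(\Om A{\glob V})$ with $\gamma(x)\gamma(y)=\gamma(z)$, apply $\delta$: we get $\beta(x)\beta(y)=\beta(z)$ in $B_Q$, and since $\beta(z)=(\term(z),\init(z))\neq 0$, the product $\beta(x)\beta(y)$ is nonzero, which by the multiplication rule of $B_Q$ forces $\init(x)=\term(y)$, i.e.\ $(x,y)\in D(\Om A{\glob V})$. Then $xy$ is defined, and $\gamma(xy)=\gamma(x)\gamma(y)=\gamma(z)$; by faithfulness of $\gamma$ (Proposition~\ref{p:faithfulness}) and the fact that $x$, $y$, $z$ are edges, $xy$ and $z$ are coterminal once we check their source and range match (read off from $\beta$ again), so $\gamma(xy)=\gamma(z)$ gives $xy=z$. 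For (ii), injectivity of $\gamma$ on edges is immediate from faithfulness plus the coordinate information in $\beta$ (distinct coterminal edges are separated by $\gamma$, distinct non-coterminal ones by $\beta$); continuity of the inverse on the image will follow once (i) gives that the image is open, since a continuous bijection onto a space that is locally the image will be a homeomorphism on edges using compactness of $\Om A{\glob V}$ — more precisely, $\gamma$ restricted to the clopen set $E(\Om A{\glob V})$ is a continuous injection from a compact space, hence a closed map, hence an embedding onto its image. That $\gamma$ is an open mapping then reduces to (i): the image of any open set is the intersection of an open set with the open factorial set $\gamma(\Om A{\glob V})$, and one uses density of $\om A{\glob V}$ together with (iii) to control products.

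The main work, and the expected obstacle, is (i): showing that $\gamma(\Om A{\glob V})$ is open and factorial in $\Om{E(A)}V$. Factoriality I would get from (iii): if a product $uv$ in $\Om{E(A)}V$ lies in the image, say $uv=\gamma(z)$, I want $u,v$ individually in the image; approximate $u,v$ by words and use that $\beta$-coordinates of a factor of something in the image are again consistent, so each factor's $\delta$-image is a nonzero Brandt element, which (with a lifting argument) places it in $\gamma(\Om A{\glob V})$. Openness is the delicate point: the natural candidate is to write $\gamma(\Om A{\glob V})=\delta^{-1}(B_Q\setminus\{0\})$, or rather the union of the $\delta$-preimages of the idempotent-free part indexed compatibly; one inclusion is clear, and for the other I would use that $\delta^{-1}((Q\times Q))$ is clopen (as $\{0\}$ is clopen in $B_Q$, $B_Q$ being finite) and that on this clopen set every element is a limit of words whose $\delta$-image is a single $(y,x)$, hence is a limit of paths of $A^+$ from $x$ to $y$, forcing the limit into $\clos V$ of $E(A)$-paths with consistent endpoints, which is exactly $\gamma(\om A{\glob V})$'s closure. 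The subtlety is that $\delta^{-1}$ of the nonzero part might a priori be strictly larger than the image of $\gamma$ if $\pv V$ is "too big'' relative to the Brandt coordinates; ruling this out is where one genuinely needs $B_2\in\pv V$ (so that the Brandt coordinates are actually realized) together with faithfulness of $\gamma$, and assembling these two facts into the equality $\gamma(\Om A{\glob V})=\delta^{-1}(Q\times Q)$ — after checking the $\delta$-image lands in the intended coordinate block — is the crux of the argument.
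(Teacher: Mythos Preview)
Your approach is essentially the paper's: your map $\delta\from\Om{E(A)}V\to B_Q$ is exactly the paper's $\varphi$, and the equality $\gamma(E(\Om A{\glob V}))=\delta^{-1}(B_Q\setminus\{0\})$ you identify as the crux is precisely equation~\eqref{eq:faithful-open-case-B2-2} in the paper. The paper's organization is a bit cleaner than yours, however: it proves the formula $\varphi(\gamma(u))=(\term(u),\init(u))$ first by density, then establishes the key equality by checking it on the dense subset $E(A)^+$ (where a word $a_1\cdots a_n$ has $\varphi$-image nonzero iff consecutive letters are composable in $A$) and taking closures --- so the ``subtlety'' you worry about dissolves in one line, with no need for approximation-and-lifting arguments; both openness and factoriality of the image then fall out immediately from the equality (factoriality because a nonzero product in $B_Q$ forces both factors to be nonzero), and~(iii) is deduced last from injectivity plus the formula, rather than first.
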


\begin{proof}
  Let us denote by $\circ$ the composition in $\Om A{\glob V}$.
  Set $Q=V(A)$. Since $B_Q\in\pv V$, there is a unique continuous homomorphism $\varphi\from \Om {E(A)}V\to B_Q$
  such that $\varphi(a)=(\term(a),\init(a))$ for every $a\in E(A)$.

  We claim that the equality 
  \begin{equation}\label{eq:faithful-open-case-B2-1}
    \varphi(\gamma(u))=(\term(u),\init(u))
  \end{equation}
  holds for every edge $u$ of $\Om A{\glob V}$. Suppose first that $u$
  is an edge of $\om A{\glob V}$. Then we have a factorization in $\om
  A{\glob V}$ of the form $u=a_1\circ \cdots \circ a_n$ for some
  $a_1,\ldots,a_n\in A$. By induction on $n$, one obtains
  $\varphi(\gamma(u))=(\term(a_1),\init(a_n))=(\term(u),\init(u))$.
  Hence~\eqref{eq:faithful-open-case-B2-1} holds for every $u\in E(\om
  A{\glob V})$. Since $\om A{\glob V}$ is dense in $\Om A{\glob V}$,
  we then deduce from the continuity of the mappings $\varphi$,
  $\gamma$, $\term$ and $\init$ the validity of the claim.

  In particular, if $x,y\in E(\Om A{\glob V})$ are such that $\gamma(x)=\gamma(y)$, we obtain
  from formula~\eqref{eq:faithful-open-case-B2-1} that $x,y$ are coterminal.
  Since $\gamma$ is faithful by Proposition~\ref{p:faithfulness}, we actually have $x=y$. As $E(\Om A{\glob V})$ and $\Om {E(A)}V$ are compact,
  this shows that $\gamma$ restricts to a topological embedding $E(\Om A{\glob V})\to \Om {E(A)}V$.

  We claim that the equality
  \begin{equation}\label{eq:faithful-open-case-B2-2}
    \gamma(E(\Om A{\glob V}))=\varphi^{-1}(B_Q\setminus\{0\})
  \end{equation}
  holds. By induction on $n\geq 1$, we see that, for all $a_1,\ldots,a_n\in E(A)$, one has $\varphi(a_1a_2\cdots a_{n-1}a_n)\neq 0$
  if and only if the composition  $a_1\circ a_2\circ \cdots \circ
  a_{n-1}\circ a_n$ is well defined in $\om A{\glob V}$.
  This means that $\om {E(A)}V\cap \varphi^{-1}(B_Q\setminus\{0\})=\gamma(E(\om A{\glob V}))$.
  Since $\om {E(A)}V$
  and $E(\om A{\glob V})$ are respectively
  dense in $\Om {E(A)}V$ and $E(\Om A{\glob V})$,
  it then follows from the continuity of $\gamma$ and $\varphi$ that the equality~\eqref{eq:faithful-open-case-B2-2}
  indeed holds.

  Then~\eqref{eq:faithful-open-case-B2-2} implies that $\gamma(E(\Om A{\glob V}))$ is open and factorial in $\Om {E(A)}V$, i.e. property~\ref{item:faithful-open-case-B2-1} is proved.
  Hence, as $\gamma$ restricts to a topological embedding $E(\Om A{\glob V})\to \Om {E(A)}V$,
  the map $\gamma$ is open, which concludes the proof of property~\ref{item:faithful-open-case-B2-2}.

  It only remains to establish property~\ref{item:faithful-open-case-B2-3}.
  Let $x,y,z\in E(\Om A{\glob V})$ be such that $\gamma(x)\cdot \gamma(v)=\gamma(z)$.
  Applying $\varphi$,
  we get from~\eqref{eq:faithful-open-case-B2-1}
  the equality
  \begin{equation*}
    (\term(x),\init(x))\cdot (\term(y),\init(y))=(\term(z),\init(z)),
  \end{equation*}
  which holds in $B_Q$; this forces $\init(x)=\term(y)$, which means that $(x,y)\in D(\Om A{\glob V})$.
  Moreover, from our hypothesis we get $\gamma(x\circ y)=\gamma(z)$.
  Since $\gamma$ is injective on $E(\Om A{\glob V})$, we deduce that $x\circ y=z$.
\end{proof}

  Let $L\subseteq A^+$ be a language over a graph $A$. Let $\Cl C$ be a class of semigroupoids.
  We say that $L$ is \emph{$\Cl C$-recognizable} if and only if there is a semigroupoid
  homomorphism $\varphi\from A^+\to F$, with $F\in\Cl C$, such that $L=\varphi^{-1}(\varphi(L))$.

\begin{remark}
    \label{r:recognizable-languages-trace}
  If $\pv V$ is a pseudovariety of semigroupoids and $A$ is an alphabet, then a language $L\subseteq A^+$ is $\pv V$-recognizable if and only if it is $(\pv S\cap\pv V)$-recognizable.
\end{remark}

The study of profinite semigroups is strongly motivated by the next theorem, established by the first author in the case
of pseudovarieties of semigroups and finite alphabets~\cite[Theorem 3.6.1]{Almeida:1994a}.
The general case for pseudovarieties of finite semigroupoids and arbitrary finite-vertex graphs (possibly with infinitely many edges)
is established in~\cite[Theorems 10.3 and 10.4]{Jones:1996}, again under the guise of pseudovarieties of categories
(cf.~Section~\cite[Section 11]{Jones:1996}).

\begin{theorem}
  \label{t:V-recognizability}
  Let \pv V be a pseudovariety of semigroupoids containing \pv N.
  Let $L$ be a language over a finite-vertex graph $A$.
  The following conditions are equivalent:
  \begin{enumerate}
  \item $L$ is $\pv V$-recognizable;\label{item:V-recognizability-1}
  \item $\clos V(L)$ is open;\label{item:V-recognizability-2}
  \item $L=K\cap E(A^+)$ for some clopen subgraph $K$ of $\Om AV$.
    \label{item:V-recognizability-3}
  \end{enumerate}
\end{theorem}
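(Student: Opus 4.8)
The plan is to prove the cycle of implications $(1)\Rightarrow(3)\Rightarrow(2)\Rightarrow(1)$. Two standing observations will be used throughout. First, since $A$ is finite-vertex, $V(\Om AV)$ is a finite, hence closed, subset of the compact Hausdorff space $\Om AV$; as $E(\Om AV)$ is also closed and these two sets partition $\Om AV$, both are clopen. Consequently, adjoining all of $V(\Om AV)$ to a clopen subset $K_0\subseteq E(\Om AV)$ yields a clopen subgraph $V(\Om AV)\cup K_0$ whose intersection with $E(A^+)$ equals $K_0\cap E(A^+)$, while conversely any clopen subgraph $K$ has clopen edge-set $E(K)=K\cap E(\Om AV)$ with $K\cap E(A^+)=E(K)\cap E(A^+)$; this lets us move freely between clopen subgraphs and clopen subsets of the edge space. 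Second, by Proposition~\ref{p:A+-embeds-isolated} (which requires $\pv N\subseteq\pv V$), $E(A^+)$ is a dense set of isolated points of $E(\Om AV)$, so that $\clos V(L)\cap E(A^+)=L$ for every language $L\subseteq A^+$ and $\clos V(E(A^+))=E(\Om AV)$.

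For $(1)\Rightarrow(3)$, start from a homomorphism $\varphi\from A^+\to F$ with $F\in\pv V$ and $L=\varphi^{-1}\varphi(L)$. Restricting $\varphi$ to a graph morphism $A\to F$ and using that the finite semigroupoid $F$ is pro-$\pv V$, the universal property of $\Om AV$ produces a continuous homomorphism $\pro{\varphi}\from\Om AV\to F$ extending it; by freeness of $A^+$ it restricts to $\varphi$ on $A^+$. Then $K_0:=\pro{\varphi}^{-1}(\varphi(L))$ is clopen because $F$ is finite discrete, and $K_0\cap E(A^+)=\varphi^{-1}\varphi(L)=L$, so the clopen subgraph $K:=V(\Om AV)\cup K_0$ witnesses~(3).

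For $(3)\Rightarrow(2)$, write $L=K\cap E(A^+)$ with $K$ a clopen subgraph. The key point — and the step I expect to require the most care — is the identity $\clos V(L)=K\cap\clos V(E(A^+))$: the inclusion ``$\subseteq$'' follows from $L\subseteq K$, from $L\subseteq E(A^+)$, and from $K$ being closed, while ``$\supseteq$'' uses that $K$ is \emph{open}, so any neighbourhood of a point of $K\cap\clos V(E(A^+))$ can be shrunk inside $K$ and still meets $E(A^+)$, hence meets $K\cap E(A^+)=L$. Since $\clos V(E(A^+))=E(\Om AV)$ by the second standing observation, $\clos V(L)=K\cap E(\Om AV)$ is an intersection of two clopen sets, in particular open.

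For $(2)\Rightarrow(1)$, the set $\clos V(L)$ is closed and, by hypothesis, open, hence clopen in the finite-vertex pro-$\pv V$ semigroupoid $\Om AV$. Proposition~\ref{p:recognition-pro-v-semigroupoid} then yields a continuous quotient homomorphism $\psi\from\Om AV\to T$ with $T\in\pv V$ and $\clos V(L)=\psi^{-1}\psi(\clos V(L))$. Because $L$ is dense in $\clos V(L)$ and $T$ is finite discrete, $\psi(\clos V(L))=\psi(L)$, whence $\psi^{-1}\psi(L)=\clos V(L)$; intersecting with $A^+$ and invoking $\clos V(L)\cap E(A^+)=L$ shows that the homomorphism $\psi|_{A^+}\from A^+\to T$ satisfies $(\psi|_{A^+})^{-1}(\psi|_{A^+}(L))=L$, so $L$ is $\pv V$-recognizable, completing the cycle.
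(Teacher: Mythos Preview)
The paper does not actually prove this theorem; it is stated with attribution to \cite[Theorem~3.6.1]{Almeida:1994a} (semigroup case) and \cite[Theorems~10.3 and~10.4]{Jones:1996} (general semigroupoid case). Your proof is correct and follows the standard route one expects for this result: lift a recognizer to $\Om AV$ via the universal property for $(1)\Rightarrow(3)$, use the density of $E(A^+)$ in $E(\Om AV)$ together with openness of $K$ to identify $\clos V(L)$ with $E(K)$ for $(3)\Rightarrow(2)$, and invoke Proposition~\ref{p:recognition-pro-v-semigroupoid} for $(2)\Rightarrow(1)$. The careful bookkeeping between clopen subgraphs and clopen edge-sets, and the use of Proposition~\ref{p:A+-embeds-isolated} to ensure $\clos V(L)\cap E(A^+)=L$, are exactly the ingredients needed.
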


Using Remark~\ref{r:recognizable-languages-trace}, one can see that the analogous result for semigroups is the special case of one-vertex graphs.

A common operation performed on recognizable languages is that of \emph{concatenation}. In the semigroupoid case, this operation maps two languages $L,K\subseteq E(A^+)$ to the language $LK\subseteq E(A^+)$ consisting of all paths of the form $uv$ for $u\in L$ and $v\in K$ such that $(u,v)\in D(A^+)$. We can deduce the following simple fact from the above theorem.

\begin{corollary}
    \label{c:concatenation-finite-retract-open}
    Let $A$ be a finite-vertex graph and $\pv V$ be a pseudovariety of semigroupoids containing $\pv N$. If the set of $\pv V$-recognizable languages over $A$ is closed under concatenation, then $\Om BV$ is open in $\Om AV$ for every finite retract $B$ of $A$.
\end{corollary}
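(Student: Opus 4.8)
The plan is to reduce the statement, via Theorem~\ref{t:V-recognizability}, to the $\pv V$-recognizability of a single language over~$A$, and then to extract that recognizability from the concatenation hypothesis by complementation.

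First, fix a retraction $r\from A\to B$ together with a section $s\from B\to A$, and set $C=s(B)$. As $s$ is faithful (being a section) and $s\circ r$ maps $A$ onto $C$ while fixing $C$ pointwise, $C$ is a retract subgraph of $A$, isomorphic to $B$, and finite (as $B$ is). By Remark~\ref{r:embedding-of-retract-subgraph}, the homomorphism $\pro{\iota}\from\Om CV\to\Om AV$ induced by the inclusion $\iota\from C\to A$ is an embedding, and its image is readily seen to be the closed subsemigroupoid $\clos V(C^+)$ generated by~$C$, where $C^+\subseteq E(A^+)$ denotes the language of those paths over $A$ all of whose edges lie in~$C$; moreover $\pro s$ identifies $\Om BV$ with~$\Om CV$. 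It therefore suffices to show that $\clos V(C^+)$ is open in $\Om AV$, and, by Theorem~\ref{t:V-recognizability}, for that it suffices to show that $C^+$ is $\pv V$-recognizable.

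I would then assemble a few elementary recognizable languages. The full language $E(A^+)$ is $\pv V$-recognizable, being recognized by the homomorphism onto the trivial semigroup; every finite language is $\pv V$-recognizable since, by Proposition~\ref{p:A+-embeds-isolated}, it consists of isolated points of $\Om AV$, so it is clopen, hence $\pv V$-recognizable by Theorem~\ref{t:V-recognizability}; and the class of $\pv V$-recognizable languages over $A$ is closed under the Boolean operations (a complement is recognized by the same homomorphism, an intersection by a homomorphism into a direct product). Applying the hypothesis once, the concatenation $E(A^+)\cdot E(A^+)$ is $\pv V$-recognizable, so the set of length-one paths $E(A)=E(A^+)\setminus\bigl(E(A^+)\cdot E(A^+)\bigr)$ is $\pv V$-recognizable, and hence so is $D=E(A)\setminus E(C)$, the set of edges of $A$ not in~$C$ (using that $E(C)$ is finite). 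The key observation is then the identity
\begin{equation*}
  E(A^+)\setminus C^+
  \;=\;
  D\;\cup\;\bigl(D\cdot E(A^+)\bigr)\;\cup\;\bigl(E(A^+)\cdot D\bigr)\;\cup\;\bigl(E(A^+)\cdot D\cdot E(A^+)\bigr),
\end{equation*}
which simply records that a path misses $C^+$ precisely when it has some edge in $D$, occurring as the whole path, or as its first edge, its last edge, or an inner edge. By the concatenation hypothesis each of the four languages on the right is $\pv V$-recognizable, hence so is their union, hence so is its complement $C^+$; Theorem~\ref{t:V-recognizability} then yields that $\clos V(C^+)$ is open, completing the argument.

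The main obstacle, to my mind, is resisting the naive strategy of building $C^+$ ``from below'' by iterating the concatenation $E(C)\cdot E(C)\cdots$: this yields only the $C$-paths of bounded length and then requires an \emph{infinite} union, and indeed the $\pv V$-recognizable languages of a pseudovariety need not be closed under the Kleene star --- in line with the fact, witnessed by Example~\ref{ex:free-nilpotent-retract-subgraph}, that the conclusion fails outright for $\pv V=\pv N$. The resolution is that, after complementing, a \emph{single} offending edge certifies non-membership, so one application of concatenation --- dropping that edge into an arbitrary context --- already suffices. A secondary, routine point is the identification of $\Om BV$ with a subspace of $\Om AV$ via the chosen section, handled by Remark~\ref{r:embedding-of-retract-subgraph}.
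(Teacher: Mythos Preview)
Your argument is correct and follows essentially the same route as the paper's: reduce via Theorem~\ref{t:V-recognizability} to showing that $C^+$ is $\pv V$-recognizable, then express its complement---the paths containing at least one edge outside~$C$---as a finite Boolean combination of concatenations of recognizable languages, crucially using that $E(C)$ is finite. The only cosmetic difference is that the paper first builds $E\bigl((A\setminus B)^+\bigr)$ by removing, for each of the finitely many $b\in E(B)$, the language $E(A^*)bE(A^*)$, whereas you obtain $D=E(A)\setminus E(C)$ in one step via $E(A)=E(A^+)\setminus E(A^+)\cdot E(A^+)$; both then finish with the same four-term decomposition.
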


\begin{proof}
    Let $B$ be a finite retract subgraph of $A$. 
    Clearly $E(A^+)$ is $\pv V$-recognizable,  and so are all singletons $\{a\}$ for $a\in E(A)$ by Proposition~\ref{p:A+-embeds-isolated}. 
    It follows that $E(A^*)aE(A^*)=E(A^+)aE(A^+)\cup E(A^+)a\cup aE(A^+) \cup\{a\}$ is $\pv V$-recognizable for every $a\in E(A)$. Since $E(B)$ is finite, the set  
    \begin{equation*}
    E\bigl((A\setminus B)^+\bigr)=E(A^+)\setminus \bigcup_{b\in E(B)}E(A^*)bE(A^*)
  \end{equation*}
  is also $\pv V$-recognizable. 
  Moreover, the equality
  \begin{equation*}
    E(B^+)=E(A^+)\setminus E\bigl(A^*(A\setminus B)^+A^*\bigr)
  \end{equation*}
  holds, and so, the language $E(B^+)\subseteq A^+$ is $\pv
  V$-recognizable. Since $\Om BV$ is the topological closure of $B^+$
  in $\Om AV$, it follows from Theorem~\ref{t:V-recognizability} that
  $\Om BV$ is open in $\Om AV$.
\end{proof}

We note that the condition of
Corollary~\ref{c:concatenation-finite-retract-open} is sufficient but
not necessary. Indeed, the conclusion holds for all pseudovarieties
containing \pv{Sl} by Proposition~\ref{p:over-Sl}, but not all such
pseudovarieties are such that $\pv V$-recognizable languages are
closed under concatenation.

\section{Open multiplication and concatenation-closed pseudovarieties}
\label{sec:open-multiplication}

In this section we characterize when
multiplication in a relatively free profinite semigroupoid, over a finite-vertex graph, is an open
mapping. We rely on a general, purely topological result
that has nothing to do with algebraic operations.

We say that a mapping between topological spaces is
\emph{open} if it maps open sets to open sets~\cite{Kelley:1975,Engelking:1989,Willard:1970}. Some authors call such
mappings \emph{strongly open}~\cite{Hart&Nagata&Vaughan:2004egt,Whyburn:1964}.
The following result seems to be folklore. The proof presented here is based on an idea of
Kelley~\cite[Lemma 5, page 70]{Kelley:1975}. Nevertheless, we
emphasize that we adopt Willard's definition of subnet
\cite[Definition~11.2]{Willard:1970}, rather than Kelley's. More explicitly, a \emph{subnet} of $(x_i)_{i\in I}$ is a net $(x_{i_j})_{j\in J}$ where the mapping $j\mapsto i_j$ is increasing and cofinal.  

\begin{proposition}
  \label{p:open-mapping}
  Let $f\from X\to Y$ be a mapping between two topological spaces. Then the
  following conditions are equivalent:
  \begin{enumerate}
  \item\label{item:open-mapping-1} the mapping $f$ is open;
  \item\label{item:open-mapping-2} for every open subset $U$ of~$X$,
    every net in~$Y$ converging to some point
    of~$f(U)$ must have points in $f(U)$;
  \item\label{item:open-mapping-3} for every $x\in X$, every net $(y_i)_{i\in I}$ in
    $Y$ converging to~$f(x)$ has a subnet $(y_{i_j})_{j\in J}$ such that $y_{i_j}=f(x_j)$ 
    for all $j\in J$, where $(x_j)_{j\in J}$ is a net in $X$ converging to $x$.
  \end{enumerate}
\end{proposition}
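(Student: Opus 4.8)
The plan is to prove the cyclic chain \ref{item:open-mapping-1}$\Rightarrow$\ref{item:open-mapping-3}$\Rightarrow$\ref{item:open-mapping-2}$\Rightarrow$\ref{item:open-mapping-1}; the last two links are short, while the first one carries all the topological content. Throughout one may assume $X\neq\emptyset$, since for $X=\emptyset$ all three conditions hold vacuously.

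For \ref{item:open-mapping-3}$\Rightarrow$\ref{item:open-mapping-2}: let $U\subseteq X$ be open and let $(y_i)_{i\in I}$ be a net in $Y$ converging to some $y=f(x)$ with $x\in U$. Applying \ref{item:open-mapping-3} to $x$ yields a subnet $(y_{i_j})_{j\in J}$ with $y_{i_j}=f(x_j)$, where $(x_j)_{j\in J}$ converges to $x$. Since $U$ is a neighbourhood of $x$, eventually $x_j\in U$, hence eventually $y_{i_j}=f(x_j)\in f(U)$; as every value of the subnet is a value of $(y_i)_{i\in I}$, the net $(y_i)_{i\in I}$ has points in $f(U)$. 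For \ref{item:open-mapping-2}$\Rightarrow$\ref{item:open-mapping-1} I would argue by contraposition: if $f$ is not open, fix an open $U\subseteq X$ with $f(U)$ not open and a point $y\in f(U)$ that is not interior to $f(U)$, so that every neighbourhood $N$ of $y$ meets $Y\setminus f(U)$. Choosing $y_N\in N\setminus f(U)$ for each $N$ in the neighbourhood filter of $y$ (directed by reverse inclusion) produces a net converging to $y\in f(U)$ with no point in $f(U)$, contradicting \ref{item:open-mapping-2}.

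The crux is \ref{item:open-mapping-1}$\Rightarrow$\ref{item:open-mapping-3}, which follows Kelley's idea. Fix $x\in X$ and a net $(y_i)_{i\in I}$ converging to $f(x)$, and let $\mathcal N$ be the set of open neighbourhoods of $x$, directed by reverse inclusion. Set $J=\{(i,N)\in I\times\mathcal N : y_i\in f(N)\}$, ordered so that $(i,N)\le(i',N')$ means $i\le i'$ in $I$ and $N\supseteq N'$. The points to verify are: (a) $J$ is nonempty and directed --- given $(i_1,N_1),(i_2,N_2)\in J$, put $N=N_1\cap N_2\in\mathcal N$; openness of $f$ makes $f(N)$ an open neighbourhood of $f(x)$, so $(y_i)_{i\in I}$ is eventually in $f(N)$, and choosing $i\in I$ with $i\ge i_1,i_2$ and $y_i\in f(N)$ produces $(i,N)\in J$ above both; (b) the projection $\phi\from J\to I$, $(i,N)\mapsto i$, is increasing and cofinal, cofinality following from the same eventual-membership argument applied to the open set $f(X)$, so that $(y_{\phi(j)})_{j\in J}$ is a subnet of $(y_i)_{i\in I}$ in Willard's sense. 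Finally, by the axiom of choice pick for each $j=(i,N)\in J$ a point $x_j\in N$ with $f(x_j)=y_i$ (possible precisely because $(i,N)\in J$); then $y_{\phi(j)}=f(x_j)$ by construction, and $(x_j)_{j\in J}$ converges to $x$, since for an open neighbourhood $V$ of $x$, openness of $f$ forces some $(i_0,V)\in J$, after which every $j\ge(i_0,V)$ satisfies $x_j\in N\subseteq V$.

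The main obstacle I foresee is exactly the bookkeeping in this last step: arranging the directed set $J$ so that $\phi$ is genuinely \emph{increasing and cofinal}, i.e.\ a subnet in the sense adopted in the paper rather than merely a Kelley subnet, and then checking that the extracted net $(x_j)_{j\in J}$ really converges to $x$. Once $J$ is set up correctly, the remaining verifications (directedness, the choice of the $x_j$, the matching $y_{\phi(j)}=f(x_j)$) are routine, and the three short implications above glue into the desired equivalence.
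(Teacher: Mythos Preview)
Your proof is correct and follows essentially the same route as the paper: the same cyclic chain \ref{item:open-mapping-1}$\Rightarrow$\ref{item:open-mapping-3}$\Rightarrow$\ref{item:open-mapping-2}$\Rightarrow$\ref{item:open-mapping-1}, with the Kelley-style product directed set $J\subseteq I\times\mathcal N$ carrying the main implication. Your version is in fact slightly more careful than the paper's in explicitly verifying that the projection $J\to I$ is increasing and cofinal (the paper asserts it is ``onto'', which need not literally hold when some $y_i$ lie outside $f(X)$, though cofinality---which is what Willard's definition requires---does follow from the argument given).
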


\begin{proof}
  $\ref{item:open-mapping-1}{\Rightarrow}\ref{item:open-mapping-3}$.  
    Suppose first that $f$ is an open mapping and let $x$ be an element
  of~$X$ and $(y_i)_{i\in I}$ be a net in~$Y$ converging to $f(x)$.
    Let $\Cl N_x$ be the set of all neighborhoods of~$x$ and consider
  the set
  \begin{displaymath}
    J = \{(i,N)\in I\times\Cl N_x\from y_i\in f(N)\}
  \end{displaymath}
  ordered by $(i,N)\le(h,M)$ if $i\le h$ and $N\supseteq M$.
  For each $(i,N)\in J$, we choose $x_{(i,N)}\in N$ such that $y_i=f(x_{(i,N)})$.

  Note that, for every $N\in \Cl N_x$, since $f$ is an open mapping,
  $f(N)$ is a neighborhood of~$y=f(x)$. Hence, as $(y_i)_{i\in I}$
  converges to~$y$, there exists $i_N\in I$ such that $i_N\leq i$
  implies $(i,N)\in J$. It follows that the set $J$ is directed and so
  the onto function $\lambda\from J\to I$ given by projection on the
  first component defines a subnet $(y_{\lambda(j)})_{j\in J}$ of
  $(y_i)_{i\in I}$. Note that $y_{\lambda(j)}=f(x_j)$ for all $j\in J$.

  Finally, let $N\in\Cl N_x$. Set $j_0=(i_N,N)$. For every $j=(i,M)\in J$
  we have $x_j\in M$, and so $j_0\leq j$
  implies $x_j\in N$. Hence, the net $(x_j)_{j\in J}$ indeed converges to $x$.
  
  $\ref{item:open-mapping-3}{\Rightarrow}\ref{item:open-mapping-2}$. Assume \ref{item:open-mapping-3} and let $U$ be an open subset of $X$. 
  Let $x\in U$ and let $(y_i)_{i\in I}$ be a net converging to $f(x)$. By \ref{item:open-mapping-3}, there is a subnet $(y_{i_j})_{j\in J}$ of $(y_i)_{i\in I}$ such that $y_{i_j}=f(x_j)$ for all $j\in J$, where $(x_j)_{j\in J}$ is a net converging to $x$. 
  Since $U$ is a neighborhood of $x$, there exists $j_0\in J$ such that $x_j\in U$ for every $j\geq j_0$. 
  In particular $f(x_{j_0})$ is an element of the net $(y_i)_{i\in I}$ which belongs to $f(U)$.
  
  $\ref{item:open-mapping-2}{\Rightarrow}\ref{item:open-mapping-1}$.
  Assume \ref{item:open-mapping-2}. Let $U$ be an open subset of~$X$.
  Let $(y_i)_{i\in I}$ be a net of elements of $Y\setminus f(U)$ converging to an element $y$ of $Y$.
  From \ref{item:open-mapping-2}, we get $y\notin f(U)$. This shows that $Y\setminus f(U)$ is closed.
\end{proof}

\begin{definition}
    Let $S$ be a topological semigroupoid. We say that a convergent net
    $(w_i)_{i\in I}$ in $E(S)$ is \emph{fully factorizable} if, for every factorization
    $\lim w_i=uv$, there is a subnet $(w_{i_j})_{j\in J}$ such that $w_{i_j} = u_jv_j$ for all $j\in J$, where $(u_j,v_j)_{j\in J}$ is a net in
    $D(S)$ converging to $(u,v)$.
\end{definition}

\begin{corollary}
  \label{c:open-mult-fully-factorizable}
  Let $S$ be a topological semigroupoid. Then
  the multiplication in $S$ is open if and only if
  every convergent net in~$S$
  is fully factorizable.
\end{corollary}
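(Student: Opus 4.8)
The plan is to recognize this corollary as a direct translation of Proposition~\ref{p:open-mapping}. Write $m\from D(S)\to E(S)$ for the multiplication of $S$, where $D(S)$ carries the subspace topology inherited from $E(S)\times E(S)$, and recall that ``the multiplication in $S$ is open'' means precisely that $m$ is an open mapping. So I would apply Proposition~\ref{p:open-mapping} with $f=m$, $X=D(S)$ and $Y=E(S)$, and invoke the equivalence between items~\ref{item:open-mapping-1} and~\ref{item:open-mapping-3}: the map $m$ is open if and only if, for every $x\in D(S)$ and every net in $E(S)$ converging to $m(x)$, that net has a subnet of the form $\bigl(m(x_j)\bigr)_{j\in J}$, where $(x_j)_{j\in J}$ is a net in $D(S)$ converging to $x$.

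The next step is to unwind this condition using the structure of $D(S)$. A point of $D(S)$ is a composable pair $(u,v)$, and $m(u,v)=uv$; moreover, since $\init$ and $\term$ are continuous and $V(S)$ is Hausdorff, $D(S)$ is closed in $E(S)\times E(S)$, so a net in $D(S)$ converges to $(u,v)\in D(S)$ exactly when its two coordinate nets converge to $u$ and to $v$. Hence item~\ref{item:open-mapping-3} for $f=m$ says: for every $(u,v)\in D(S)$ and every net $(w_i)_{i\in I}$ in $E(S)$ with $w_i\to uv$, there is a subnet $(w_{i_j})_{j\in J}$ such that $w_{i_j}=u_jv_j$ for all $j$, where $\bigl((u_j,v_j)\bigr)_{j\in J}$ is a net in $D(S)$ converging to $(u,v)$.

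Finally I would observe that this last statement is exactly the assertion that every convergent net in $S$ is fully factorizable. To match the quantifiers, note that specifying a convergent net $(w_i)_{i\in I}$ in $E(S)$ together with a factorization $\lim w_i=uv$ of its limit is the same as specifying a composable pair $(u,v)\in D(S)$ together with a net in $E(S)$ converging to $uv$; and under this identification the conclusion required by the definition of a fully factorizable net coincides with the conclusion displayed above. Therefore the two conditions are equivalent, which yields the corollary.

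As for obstacles, there is essentially no substantive one: the corollary is a transcription of Proposition~\ref{p:open-mapping} into the language of semigroupoid multiplication. The only points needing a moment's care are (i) fixing the relevant topology, namely the subspace topology on $D(S)\subseteq E(S)\times E(S)$, together with the observation that $D(S)$ is closed there, so that convergence in $D(S)$ amounts to coordinatewise convergence with composable limit; and (ii) checking that the two orders of universal quantification---first a net and then a factorization of its limit, in the definition of \emph{fully factorizable}, versus first a point $x$ and then a net converging to $m(x)$, in item~\ref{item:open-mapping-3}---describe the same global statement. Both are routine.
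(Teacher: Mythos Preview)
Your proposal is correct and follows exactly the paper's approach: apply the equivalence \ref{item:open-mapping-1}$\Leftrightarrow$\ref{item:open-mapping-3} of Proposition~\ref{p:open-mapping} to the multiplication map $m\from D(S)\to E(S)$ and unwind. The paper's proof is a one-line reference to this equivalence, and your additional remarks (closedness of $D(S)$, matching the quantifiers) are accurate but not strictly needed.
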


\begin{proof}
  It suffices to apply to the multiplication mapping $D(S)\to S$ the equivalence \ref{item:open-mapping-1}$\Leftrightarrow$\ref{item:open-mapping-3}
in Proposition~\ref{p:open-mapping}.
\end{proof}

The following result provides several alternative characterizations of
when multiplication in a relatively free profinite semigroupoid is an
open mapping. 
Most importantly, it relates the open multiplication property with closure of \pv V-recognizable languages under concatenation. 

\begin{theorem}
  \label{t:mult-open-in-free-profinite}
  Let $A$ be a finite-vertex graph and \pv V be a
  pseudovariety of semigroupoids containing~$\pv N$.
  The following conditions are equivalent.
    \begin{enumerate}
        \item\label{item:mult-open-in-free-profinite-1}
        The set of \pv V-recognizable languages over $A$ is closed under concatenation.
        \item\label{item:mult-open-in-free-profinite-2} The multiplication in $\Om AV$ is open.
        \item\label{item:mult-open-in-free-profinite-4} For every finite retract $B$ of~$A$, the multiplication in $\Om BV$ is open.
        \item\label{item:mult-open-in-free-profinite-5} For every retract $B$ of~$A$, the multiplication in $\Om BV$ is open.
    \end{enumerate}
\end{theorem}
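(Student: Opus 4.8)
The plan is to run the cycle of implications $\ref{item:mult-open-in-free-profinite-1}\Leftrightarrow\ref{item:mult-open-in-free-profinite-2}$, $\ref{item:mult-open-in-free-profinite-1}\Rightarrow\ref{item:mult-open-in-free-profinite-5}$, $\ref{item:mult-open-in-free-profinite-5}\Rightarrow\ref{item:mult-open-in-free-profinite-4}$ (trivial, since every finite retract is a retract), and $\ref{item:mult-open-in-free-profinite-4}\Rightarrow\ref{item:mult-open-in-free-profinite-1}$. The engine behind the first equivalence is the identity
\[
  \clos V(L)\cdot\clos V(M)=\clos V(LM),
\]
valid for arbitrary languages $L,M\subseteq E(A^{+})$, where for $X,Y\subseteq E(\Om AV)$ we write $X\cdot Y=\{xy:x\in X,\ y\in Y,\ (x,y)\in D(\Om AV)\}$. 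For ``$\subseteq$'' I would approximate $x\in\clos V(L)$ and $y\in\clos V(M)$ by nets in $L$ and $M$, observe that the approximating pairs lie eventually in $D(A^{+})$ since $D(\Om AV)$ is clopen in $\Om AV\times\Om AV$ ($A$ being finite-vertex), and conclude by continuity of the multiplication. For ``$\supseteq$'' I would take a net in $LM$ converging to a point of $\clos V(LM)$, write each term as a composite of two factors, pass to convergent subnets of the two factor-nets by compactness, and use that $D(\Om AV)$ is closed to see that the limiting pair is composable.

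Granting this, $\ref{item:mult-open-in-free-profinite-1}\Leftrightarrow\ref{item:mult-open-in-free-profinite-2}$ is immediate. As $D(\Om AV)$ is clopen in $\Om AV\times\Om AV$ and the clopen subsets of $\Om AV$ form a base of its topology, the multiplication of $\Om AV$ is open if and only if $U\cdot W$ is open for all clopen $U,W\subseteq E(\Om AV)$. For such a $U$, density of $E(A^{+})$ gives $U=\clos V(U\cap E(A^{+}))$, and $U\cap E(A^{+})$ is $\pv V$-recognizable by Theorem~\ref{t:V-recognizability}; conversely $\clos V(L)$ is clopen for every $\pv V$-recognizable $L$, again by Theorem~\ref{t:V-recognizability}. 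Setting $L=U\cap E(A^{+})$ and $M=W\cap E(A^{+})$, the identity above turns ``$U\cdot W$ is open'' into ``$\clos V(LM)$ is open'', which, once more by Theorem~\ref{t:V-recognizability}, means ``$LM$ is $\pv V$-recognizable''. Both implications follow at once.

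For $\ref{item:mult-open-in-free-profinite-1}\Rightarrow\ref{item:mult-open-in-free-profinite-5}$, let $r\from A\to B$ be a retraction with section $s\from B\to A$; then $B$ is again finite-vertex (being $r(A)$), and the induced homomorphisms $r^{+}\from A^{+}\to B^{+}$, $s^{+}\from B^{+}\to A^{+}$ satisfy $r^{+}\circ s^{+}=\mathrm{id}$. Given $\pv V$-recognizable $L,M\subseteq E(B^{+})$, the languages $(r^{+})^{-1}(L)$ and $(r^{+})^{-1}(M)$ are $\pv V$-recognizable over $A$ (precompose a recognizing homomorphism with $r^{+}$). Since a graph morphism preserves path length, any factorization of $r^{+}(w)$ lifts to a factorization of $w$ at the same cut point, whence $(r^{+})^{-1}(L)\cdot(r^{+})^{-1}(M)=(r^{+})^{-1}(LM)$; by \ref{item:mult-open-in-free-profinite-1} this is $\pv V$-recognizable over $A$, and as $(s^{+})^{-1}\bigl((r^{+})^{-1}(LM)\bigr)=LM$ and preimages of $\pv V$-recognizable languages under homomorphisms are $\pv V$-recognizable, $LM$ is $\pv V$-recognizable over $B$. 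So $\pv V$-recognizable languages over $B$ are concatenation-closed, and applying $\ref{item:mult-open-in-free-profinite-1}\Rightarrow\ref{item:mult-open-in-free-profinite-2}$ to the graph $B$ shows that the multiplication in $\Om BV$ is open.

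For $\ref{item:mult-open-in-free-profinite-4}\Rightarrow\ref{item:mult-open-in-free-profinite-1}$, let $L,M\subseteq E(A^{+})$ be $\pv V$-recognizable and choose a single homomorphism $\varphi\from A^{+}\to F$ with $F$ finite in $\pv V$ recognizing both. As $F$ is finite, $\varphi$ has finite image on $E(A)$, hence factors as $\varphi=\bar\varphi\circ q_{\theta}^{+}$ for a finite-index graph equivalence $\theta$ of $A$, with $q_{\theta}^{+}\from A^{+}\to(A/\theta)^{+}$ induced by $q_{\theta}$; thus $L=(q_{\theta}^{+})^{-1}(\bar L)$ and $M=(q_{\theta}^{+})^{-1}(\bar M)$ for $\pv V$-recognizable $\bar L,\bar M\subseteq E((A/\theta)^{+})$. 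As $A/\theta$ is a finite retract of $A$, hypothesis~\ref{item:mult-open-in-free-profinite-4} together with $\ref{item:mult-open-in-free-profinite-2}\Rightarrow\ref{item:mult-open-in-free-profinite-1}$ for the finite graph $A/\theta$ give that $\bar L\bar M$ is $\pv V$-recognizable over $A/\theta$; the length argument of the previous paragraph then yields $LM=(q_{\theta}^{+})^{-1}(\bar L\bar M)$, so $LM$ is $\pv V$-recognizable over $A$. The step I expect to be most delicate is exactly this combinatorial bookkeeping in the two pull-back arguments --- pinning down that $(r^{+})^{-1}(L)\cdot(r^{+})^{-1}(M)=(r^{+})^{-1}(LM)$ and $LM=(q_{\theta}^{+})^{-1}(\bar L\bar M)$ hold on the nose, both resting on the fact that a factorization of a finite path in the image lifts, at the same position, to the source, and making sure the transfer of $\pv V$-recognizability uses only $r^{+}\circ s^{+}=\mathrm{id}$ rather than $B$ being literally a subgraph of $A$. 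By contrast, the closure identity and the reduction to clopen sets are routine.
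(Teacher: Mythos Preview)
Your proof is correct, and it takes a genuinely different route from the paper's.

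Both you and the paper handle \ref{item:mult-open-in-free-profinite-1}$\Leftrightarrow$\ref{item:mult-open-in-free-profinite-2} the same way, via the closure identity $\clos V(L)\cdot\clos V(M)=\clos V(LM)$ together with Theorem~\ref{t:V-recognizability}. The divergence is in how each reaches the finite-retract condition. The paper proves \ref{item:mult-open-in-free-profinite-1}$\Rightarrow$\ref{item:mult-open-in-free-profinite-4} topologically: it first shows (Corollary~\ref{c:concatenation-finite-retract-open}) that under \ref{item:mult-open-in-free-profinite-1} the subsemigroupoid $\Om BV$ is \emph{open} in $\Om AV$ for finite retract subgraphs $B$, then transfers open multiplication from $\Om AV$ to $\Om BV$. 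For the converse \ref{item:mult-open-in-free-profinite-4}$\Rightarrow$\ref{item:mult-open-in-free-profinite-2}, the paper works inside $\Om AV$: given clopen $K,L$, it factors their recognizing map through a finite retract $B$, and then uses \emph{full factorizability} (Corollary~\ref{c:open-mult-fully-factorizable}) to lift factorizations of elements of $\Om BV$ back to $\Om AV$, checking that $KL=\eta^{-1}\eta(KL)$.

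Your argument instead keeps everything at the level of finite paths. For \ref{item:mult-open-in-free-profinite-1}$\Rightarrow$\ref{item:mult-open-in-free-profinite-5} you pull $\pv V$-recognizable languages back along $r^{+}$ and push forward along $s^{+}$; for \ref{item:mult-open-in-free-profinite-4}$\Rightarrow$\ref{item:mult-open-in-free-profinite-1} you factor a recognizing homomorphism through a finite-index graph equivalence (this is the combinatorial shadow of Lemma~\ref{l:factorization-of-finite-projection}) and reduce to the finite case. In both steps the single nontrivial point is the equality $(r^{+})^{-1}(L)\cdot(r^{+})^{-1}(M)=(r^{+})^{-1}(LM)$, and your justification---that graph morphisms preserve path length, so a cut of $r^{+}(w)$ at position $k$ lifts to the cut of $w$ at the same position---is exactly right and works for arbitrary graph morphisms, whether or not they are vertex-injective. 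The net effect is that you bypass both Corollary~\ref{c:concatenation-finite-retract-open} and the full-factorizability machinery of Corollary~\ref{c:open-mult-fully-factorizable}, giving a more elementary proof. The paper's approach, by contrast, exhibits full factorizability in action, which is a recurring technique later in the paper (e.g., Propositions~\ref{p:closure-of-factorial} and~\ref{p:prefixes-R-class}).
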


\begin{proof}
  \ref{item:mult-open-in-free-profinite-1}$\Rightarrow$\ref{item:mult-open-in-free-profinite-2}.
  Let $K,M$ be clopen subsets of $\Om AV$. 
  By Theorem~\ref{t:V-recognizability}, the languages $L=K\cap A^+$ and
  $N=M\cap A^+$ are $\pv V$-recognizable. Therefore, by hypothesis,
  $LN$ is $\pv V$-recognizable, and so, by Theorem~\ref{t:V-recognizability},
  the set $\clos V(LN)$ is clopen. Since $A$ is dense in $\Om AV$,
  the equalities $L=K\cap A^+$ and
  $N=M\cap A^+$ yield the equalities $\clos V(L)=K$ and $\clos V(N)=M$.
  Therefore, $KM=\clos V(LK)$ is clopen. We showed that
  the product of two clopen sets of $\Om AV$ is always clopen. Since $\Om AV$ is zero-dimensional, this
  establishes that the multiplication in $\Om AV$ is open.
  
  \ref{item:mult-open-in-free-profinite-2}$\Rightarrow$\ref{item:mult-open-in-free-profinite-1}.
  By Theorem~\ref{t:V-recognizability}, if $K,L\subseteq A^+$ are $\pv
  V$-recognizable, then $\clos V(K)$ and $\clos V(L)$ are open subsets
  of $\Om AV$. Hence, by our assumption that the multiplication in
  $\Om AV$ is open, the set $\clos V(K)\clos V(L)$ is open. Since the
  equalities $\clos V(K)\clos V(L)=\clos V(KL)$ and $\clos V(KL)\cap
  A^+=KL$ hold (the former by continuity of multiplication, and the latter
  because $\pv V$ contains $\pv N$), it follows from
  Theorem~\ref{t:V-recognizability} that $KL$ is $\pv V$-recognizable.
  
  \ref{item:mult-open-in-free-profinite-1}$\Rightarrow$\ref{item:mult-open-in-free-profinite-4}.
  By the above, we may assume that \ref{item:mult-open-in-free-profinite-2} also holds.
  Let $B$ be a finite retract of $A$. Every finite retract of $A$ is isomorphic
  to a retract subgraph of~$A$, so we may as well suppose that $B$ is a retract subgraph of $A$. Let $m_B$ and $m_A$ denote the multiplication mappings of $\Om BV$ and $\Om AV$ respectively. It then suffices to observe that the equality $m_B(K)=m_A(K)$ holds for every subset $K$ of $D(\Om BV)$, which implies that $m_B$ is an open mapping since $\Om BV$ is an open subset of $\Om AV$ by Corollary \ref{c:concatenation-finite-retract-open}.
  
  \ref{item:mult-open-in-free-profinite-4}$\Rightarrow$\ref{item:mult-open-in-free-profinite-2}.
  Since $\Om AV$
  is zero-dimensional, it suffices to show that, for arbitrary clopen
  subsets $K$ and $L$, the product $KL$ is open.
  By Proposition~\ref{p:recognition-pro-v-semigroupoid}, we may take a continuous quotient homomorphism $\varphi\from\Om AV\to S$,
  with $S\in\pv V$, such that, for some subsets $P$ and $Q$ of~$S$, one has $K=\varphi^{-1}(P)$ and $L=\varphi^{-1}(Q)$.
    
  By Lemma~\ref{l:factorization-of-finite-projection}, for some finite retract $B$ of $A$
  there are continuous quotient homomorphisms $\eta\from\Om AV\to \Om BV$
  and $\psi\from\Om BV\to S$
  such that $\varphi=\psi\circ\eta$ and $\eta(A)=B$.
   \begin{equation*}
 \begin{split}
    \xymatrix{
      \Om AV\ar[d]_{\eta}\ar[rd]^{\varphi}&\\
      \Om BV\ar[r]_{\psi}&S
       }
   \end{split}                                       
  \end{equation*}
  As $\eta$ is onto, we have
  \begin{equation*}
    \psi^{-1}(P)=\eta\eta^{-1}\psi^{-1}(P)=\eta(\psi\circ\eta)^{-1}(P)=\eta\varphi^{-1}(P)=\eta(K),
  \end{equation*}
  and, similarly, $\psi^{-1}(Q)=\eta(L)$. The equalities $\eta(K)=\psi^{-1}(P)$ and $\eta(L)=\psi^{-1}(Q)$
  yield that $\eta(K)$ and $\eta(L)$ are open subsets of $\Om BV$.
  Noting that $\eta(K)\eta(L)=\eta(KL)$, as $\eta$ is a quotient homomorphism,
  it then follows from our hypothesis that $\eta(KL)$ is also open. Therefore,
  by continuity of $\eta$, to conclude the proof
  it suffices to establish the equality $KL=\eta^{-1}\eta(KL)$.
  
  Let $w\in \eta^{-1}\eta(KL)$. We may consider $u\in \eta(K)$ and
  $v\in\eta(L)$ such that $\eta(w)=uv$. Since $\pv V$ contains $\pv
  N$, we may take advantage of the inclusions $A^+\subseteq\Om AV$ and
  $B^+\subseteq\Om AV$. In particular, as $A^+$ is dense in $\Om AV$,
  we may pick a net $(w_i)_{i\in I}$ of elements of $A^+$ converging
  in $\Om AV$ to $w$. Since $\lim\eta(w_i)=uv$, it follows from
  Corollary~\ref{c:open-mult-fully-factorizable} that there is a net
  $(u_j,v_j)_{j\in J}$ converging in $\Om BV\times\Om BV$ to $(u,v)$
  and a subnet $(\eta(w_{i_j}))_{j\in J}$ of
  $(\eta(w_i))_{i\in I}$ such that
  $\eta(w_{i_j})=u_jv_j$ for all $j\in J$. 
  Recall that $\eta(A)=B$, whence $\eta(A^+)=B^+$. In particular we have
  $u_jv_j=\eta(w_{i_j})\in B^+$ for every $j\in J$. 
  By Corollary~\ref{c:A+-as-filter}, the graph $B^+$ is factorial in $\Om
  BV$. Therefore, $u_j$ and $v_j$ belong to $B^+$ for every $j\in J$.
  Again by the equality $\eta(A)=B$, for each $j\in J$ the paths
  $w_{i_j}$ and $\eta(w_{i_j})$ have the same length,
  and so we may consider a factorization $w_{i_j}=u_j'v_j'$,
  with $u_j',v_j'\in A^+$, such that $\eta(u_j')=u_j$ and
  $\eta(v_j')=v_j$. Let $(u',v')$ be a cluster point of the net
  $(u_j',v_j')$. By continuity of the multiplication of $\Om AV$ and
  of the mapping $\eta$, we respectively get $w=u'v'$ and
  $\eta(u')=u$, $\eta(v')=v$. Since $u\in \eta(K)$, we then have
  $\varphi(u')=\psi(u)\in\psi\eta(K)=P$, and so, as
  $\varphi^{-1}(P)=K$, we see that $u'\in K$. Similarly, one also has
  $v'\in L$. Hence, the equality $w=u'v'$ yields $w\in KL$. This
  establishes the equality $KL=\eta^{-1}\eta(KL)$, which concludes the
  proof that \ref{item:mult-open-in-free-profinite-4} implies \ref{item:mult-open-in-free-profinite-2}.

    \ref{item:mult-open-in-free-profinite-4}$\Rightarrow$\ref{item:mult-open-in-free-profinite-5}.
    Let $B$ be some retract of~$A$.
    Let $C$ be a finite retract of~$B$. Every retract of $B$ is a retract of $A$, and so,
    by the assumption that $\ref{item:mult-open-in-free-profinite-4}$ holds, the multiplication
    in $\Om CV$ is open. We have already shown that the equivalence
    \ref{item:mult-open-in-free-profinite-4}$\Leftrightarrow$\ref{item:mult-open-in-free-profinite-1} holds. Since $C$ is an arbitrary finite retract of $B$, applying that equivalence to $B$ instead of $A$, we
    get that the multiplication in $\Om BV$ is open.
    
    \ref{item:mult-open-in-free-profinite-5}$\Rightarrow$\ref{item:mult-open-in-free-profinite-4}.
  This implication is trivial.
\end{proof}

For pseudovarieties of semigroupoids of the form $\pv{\glob V}$, we also have the following sufficient condition for the multiplication in the semigroupoid $\Om A{\glob V}$ to be open.

\begin{proposition}
  \label{p:open-in-EA-implies-open-in-global}
  Let $A$ be a finite-vertex graph and \pv V be a pseudovariety of
  semigroups containing~$B_2$. If the multiplication in $\Om
  {E(A)}V$ is an open mapping, then the multiplication
  in $\Om A{\glob V}$ is also an open mapping.
\end{proposition}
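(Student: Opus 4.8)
The plan is to use the net characterization of open multiplication provided by Corollary~\ref{c:open-mult-fully-factorizable}, pushing factorizations forward and then backward along the homomorphism $\gamma\from\Om A{\glob V}\to\Om {E(A)}V$, whose structural properties are supplied by Proposition~\ref{p:faithful-open-case-B2} (applicable since $\pv V$ contains $B_2$). By Corollary~\ref{c:open-mult-fully-factorizable}, it suffices to show that every convergent net in $E(\Om A{\glob V})$ is fully factorizable. So let $(w_i)_{i\in I}$ be a net in $E(\Om A{\glob V})$ converging to $w$, and fix a factorization $w=uv$ with $(u,v)\in D(\Om A{\glob V})$.

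First I would transport the situation to the semigroup $\Om {E(A)}V$. Since $\gamma$ is a continuous homomorphism and $(u,v)\in D(\Om A{\glob V})$, we have $\gamma(w_i)\to\gamma(w)=\gamma(u)\gamma(v)$. As the multiplication in $\Om {E(A)}V$ is open by hypothesis, Corollary~\ref{c:open-mult-fully-factorizable} applies there, so the net $(\gamma(w_i))_{i\in I}$ is fully factorizable. Applied to the factorization $\gamma(u)\gamma(v)$ of its limit, this yields a subnet $(\gamma(w_{i_j}))_{j\in J}$ together with a net $(p_j,q_j)_{j\in J}$ in $\Om {E(A)}V\times\Om {E(A)}V$ converging to $(\gamma(u),\gamma(v))$ and satisfying $\gamma(w_{i_j})=p_jq_j$ for every $j\in J$.

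Next I would lift this back to $\Om A{\glob V}$. For each $j$, the edge $\gamma(w_{i_j})$ lies in $\gamma(E(\Om A{\glob V}))$, which is factorial in $\Om {E(A)}V$ by Proposition~\ref{p:faithful-open-case-B2}\ref{item:faithful-open-case-B2-1}; hence both $p_j$ and $q_j$ lie in $\gamma(E(\Om A{\glob V}))$, and by the injectivity of $\gamma$ on edges there are unique $u_j,v_j\in E(\Om A{\glob V})$ with $\gamma(u_j)=p_j$ and $\gamma(v_j)=q_j$. From $\gamma(u_j)\gamma(v_j)=\gamma(w_{i_j})$ and Proposition~\ref{p:faithful-open-case-B2}\ref{item:faithful-open-case-B2-3} I obtain $(u_j,v_j)\in D(\Om A{\glob V})$ and $u_jv_j=w_{i_j}$. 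Finally, since $\gamma$ restricts to a topological embedding $E(\Om A{\glob V})\to\Om {E(A)}V$ by Proposition~\ref{p:faithful-open-case-B2}\ref{item:faithful-open-case-B2-2}, the convergences $\gamma(u_j)=p_j\to\gamma(u)$ and $\gamma(v_j)=q_j\to\gamma(v)$ imply $u_j\to u$ and $v_j\to v$, so $(u_j,v_j)\to(u,v)$ in $D(\Om A{\glob V})$. Thus $(w_{i_j})_{j\in J}$ witnesses that $(w_i)_{i\in I}$ is fully factorizable, and Corollary~\ref{c:open-mult-fully-factorizable} then gives that the multiplication in $\Om A{\glob V}$ is open.

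The step I expect to be most delicate is the lifting: passing from composability in the \emph{semigroup} $\Om {E(A)}V$, where every pair of elements is composable, to genuine composability of the lifted factors in the \emph{semigroupoid} $\Om A{\glob V}$. There is no a priori reason for $u_j$ and $v_j$ to be coterminal in the way required by $D(\Om A{\glob V})$, and this is exactly what Proposition~\ref{p:faithful-open-case-B2}\ref{item:faithful-open-case-B2-3} is designed to provide — which is why the hypothesis $B_2\in\pv V$ is essential. One must also take care to invoke factoriality of $\gamma(E(\Om A{\glob V}))$ in order to know that $p_j$ and $q_j$ \emph{individually} (not merely their product) are images of edges of $\Om A{\glob V}$.
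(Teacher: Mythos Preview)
Your proof is correct. The paper, however, takes a shorter and more direct route: rather than passing through the net characterization of Corollary~\ref{c:open-mult-fully-factorizable}, it works directly with open sets via the commutative square
\[
\xymatrix@C=2.5cm@R=1.0cm{
  D(\Om A{\glob V})\ar[d]_{\gamma\times\gamma|}\ar[r]^{m_A} & \Om A{\glob V}\ar[d]^{\gamma}\\
  \Om{E(A)}V\times\Om{E(A)}V\ar[r]^{m_{E(A)}} & \Om{E(A)}V.
}
\]
Since $\gamma$ is an open mapping (Proposition~\ref{p:faithful-open-case-B2}\ref{item:faithful-open-case-B2-2}) and $D(\Om A{\glob V})$ is open, $(\gamma\times\gamma)(U)$ is open for every open $U\subseteq D(\Om A{\glob V})$; the hypothesis on $m_{E(A)}$ then makes $\gamma(m_A(U))$ open, and $m_A(U)$ is open because $\gamma$ is a homeomorphism onto its image. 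The paper's argument thus uses only part~\ref{item:faithful-open-case-B2-2} of Proposition~\ref{p:faithful-open-case-B2}, while yours needs all three parts (factoriality to ensure the factors $p_j,q_j$ individually lift, injectivity to name the lifts, and part~\ref{item:faithful-open-case-B2-3} to recover composability in the semigroupoid). Your route makes the ``delicate'' composability issue explicit and visible, whereas the paper's diagram chase hides it inside the assertion that $\gamma$ is a homeomorphism onto its image; both are legitimate, but the paper's is the more economical packaging of the same ingredients.
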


\begin{proof}
  Let $m_A$ and $m_{E(A)}$ be the multiplication in $\Om A{\glob V}$
  and in $\Om {E(A)}V$, respectively. Assume that $m_{E(A)}$ is an
  open mapping and let $U$ be an open subset of $D(\Om A{\glob V})$.
  We wish to show that $m_A(U)$ is open.
  
  The function $\gamma\times \gamma\from\Om A{\glob V}\times \Om
  A{\glob V}\to \Om {E(A)}V\times \Om {E(A)}V$, which maps each
  $(x,y)\in \Om A{\glob V}\times \Om A{\glob V}$ to
  $(\gamma(x),\gamma(y))$, is an open mapping by
  Proposition~\ref{p:faithful-open-case-B2}. In particular, and since
  $D(\Om A{\glob V})$ is open in $\Om A{\glob V}\times \Om A{\glob
    V}$, the set $(\gamma\times \gamma)(U)$ is open in $\Om
  {E(A)}V\times \Om {E(A)}V$. By the assumption that $m_{E(A)}$ is an
  open mapping, and by commutativity of
  Diagram~\ref{eq:open-in-EA-implies-open-in-global-1}, it then
  follows that $\gamma(m_A(U))$ is open in $\Om {E(A)}V$.
    \begin{equation}
  \begin{split}
    \label{eq:open-in-EA-implies-open-in-global-1}
    \xymatrix@C=2.5 cm@R=1.0 cm{
      D(\Om A{\glob V})\ar[d]_{\gamma\times \gamma|}\ar[r]^{m_A}&\Om A{\glob V}\ar[d]^{\gamma}\\
      \Om {E(A)}V\times \Om {E(A)}V\ar[r]^{m_{E(A)}}&\Om {E(A)}V}
  \end{split}                                                    
  \end{equation}
  Proposition~\ref{p:faithful-open-case-B2} yields
  that $\gamma$ is a homeomorphism onto its image.
  Therefore, the set $m_A(U)$ is open in $\Om A{\glob V}$.
\end{proof}

The pseudovariety $\pv{\glob N}$ itself fails the equivalent conditions in Theorem~\ref{t:mult-open-in-free-profinite}. For instance, if $A$ is a finite alphabet with at least two letters then, for every $a\in A$, the two languages $\{a\}$ and $A^+$ are \pv N-recognizable, but their concatenation is not.

Let $\pv{\loc Sl}$ be the pseudovariety consisting of all finite
semigroups $S$ such that, for every idempotent $e$ of $S$, the
semigroup $eSe$ belongs to $\pv {Sl}$.

\begin{example}
  Let $B$ be the graph with only two vertices $v,w$ and two edges
  $x,y\in B(v,w)$, for which the semigroupoid $\Om B{\glob V}$ is
  finite, whence it has open multiplication, for any choice of
  pseudovariety of semigroups $\pv V$. Thus, to show that the converse
  of Proposition~\ref{p:open-in-EA-implies-open-in-global} fails, it
  suffices to choose a pseudovariety $\pv V$ containing $B_2$ such
  that the multiplication in $\Om{E(B)}{\glob V}=\Om{E(B)}V$ is not
  open. For this purpose, we take $\pv V=\pv{LSl}$. It amounts to
  straightforward syntactic calculations to show that the language
  $x^+y^+$ is $\pv{LSl}$-recognizable but $x^+y^+x^+y^+$ is not
  (cf.~\cite{Pin:1986;bk}). Hence, the set of $\pv{LSl}$-recognizable
  languages of $\bigl(E(B)\bigr)^+$ is not closed under concatenation
  and, therefore, multiplication in $\Om{E(B)}{LSl}$ is not open by
  the proof of \cite[Lemma~2.3]{Almeida&ACosta:2007a}.
\end{example}

We say that a pseudovariety of semigroups \pv V is
\emph{concatenation-closed} if for every finite alphabet~$A$, the set of $\pv V$-recognizable languages over $A$ is closed under concatenation (which means that the associated variety of languages
according to Eilenberg's correspondence \cite{Eilenberg:1976,Pin:1986;bk} is closed under concatenation). We adopt the same definition for pseudovarieties of finite semigroupoids, where finite alphabets are replaced with finite graphs. 

For the semigroup case, the following characterization of concatenation-closed pseudovarieties can be found in a paper of Chaubard, Pin and Straubing~\cite{Chaubard&Pin&Straubing:2006}, as a special case of a
  far more general result about the so-called \emph{varieties of stamps}. 
  This equivalence is closely related to earlier work of Straubing, who showed the validity of its natural counterpart for
  pseudovarieties of monoids~\cite{Straubing:1979a}. The pseudovariety of all finite aperiodic semigroups is denoted $\pv A$. We refer to~\cite{Rhodes&Steinberg:2009qt}
for the definition of the Mal'cev product $\pv V\malcev \pv W$ of pseudovarieties of semigroups $\pv V,\pv W$.
Bear in mind that $\pv V$ and $\pv W$ are both contained in $\pv V\malcev \pv W$. 

\begin{theorem}
 \label{t:concatenation-malcev}
  Let \pv V be a pseudovariety of semigroups. Then $\pv V$ is concatenation-closed if and only if the equality $\pv A\malcev\pv V=\pv V$ holds.
\end{theorem}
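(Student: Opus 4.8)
The plan is to prove the two implications separately, using the standard correspondence between pseudovarieties of semigroups and recognizable languages together with two classical transfer facts: Schützenberger's theorem that a language over a finite alphabet is $\pv A$-recognizable if and only if it is star-free, and the theorem (classical; see~\cite{Pin:1986;bk}) that the Schützenberger product of semigroups from a pseudovariety $\pv V$ lies in $\pv{Sl}\malcev\pv V$, hence in $\pv A\malcev\pv V$. Two preliminary observations will be used in both directions. First, for a fixed finite alphabet $A$ the $\pv V$-recognizable languages over $A$ form a Boolean algebra; consequently $\pv V$ is concatenation-closed precisely when, for every finite $A$, this Boolean algebra is closed under marked products $L_0a_1L_1\cdots a_kL_k$ with all $L_i$ $\pv V$-recognizable. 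Second, a finite semigroup $S$ lies in $\pv V$ if and only if every language recognized by a homomorphism onto $S$ is $\pv V$-recognizable: the forward direction holds because the syntactic semigroup of a recognized language divides the recognizing semigroup, and the converse because $S$ embeds into the direct product of the syntactic semigroups of the languages $\eta^{-1}(s)$, $s\in S$, where $\eta\from A^+\to S$ is the canonical surjection attached to a finite generating set of $S$.

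Suppose first that $\pv A\malcev\pv V=\pv V$, and let $L,K\subseteq A^+$ be $\pv V$-recognizable over a finite alphabet $A$, recognized by $\varphi\from A^+\to S$ and $\psi\from A^+\to T$ with $S,T\in\pv V$. After adjoining identities where necessary, the concatenation $LK$ is recognized by the Schützenberger product of $S^I$ and $T^I$; by the transfer fact above this product belongs to $\pv{Sl}\malcev\pv V\subseteq\pv A\malcev\pv V=\pv V$, so $LK$ is $\pv V$-recognizable. As $L$ and $K$ were arbitrary, the first preliminary observation shows that $\pv V$ is concatenation-closed.

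Conversely, suppose the $\pv V$-recognizable languages over every finite alphabet are closed under concatenation, and take $S\in\pv A\malcev\pv V$. By the second preliminary observation it suffices to show that every language $L\subseteq A^+$ recognized by a surjection $\varphi\from A^+\to S$ is $\pv V$-recognizable, and, using the Boolean algebra structure, we may assume $L=\varphi^{-1}(s)$ for a single $s\in S$. By standard properties of the Mal'cev product we may fix a relational morphism $\tau\from S\to T$ with $T\in\pv V$ all of whose idempotent fibres $\tau^{-1}(e)$ are aperiodic. Regard $\tau$ as a subsemigroup $R$ of $S\times T$, with projections $p_1\from R\to S$ (surjective) and $p_2\from R\to T$; lift $\varphi$ through $p_1$ to a homomorphism $\tilde\varphi\from A^+\to R$ and set $\psi=p_2\circ\tilde\varphi\from A^+\to T$. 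Then $L$ is a finite union of the languages $\tilde\varphi^{-1}(r)$ with $p_1(r)=s$, each contained in an already $\pv V$-recognizable language $\psi^{-1}(t)$. The key point is that inside $\psi^{-1}(t)$ the remaining coordinate of $\tilde\varphi$ is controlled by the \emph{aperiodic} kernel of the morphism $\psi$, so that a Straubing-type wreath product principle together with Schützenberger's theorem expresses each $\tilde\varphi^{-1}(r)$ as a Boolean combination of marked products $L_0a_1L_1\cdots a_kL_k$ with every $L_i$ recognized by $T\in\pv V$. The concatenation-closedness hypothesis then makes each such marked product $\pv V$-recognizable, hence so are $\tilde\varphi^{-1}(r)$ and $L$; by the second preliminary observation, $S\in\pv V$. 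Together with the trivial inclusion $\pv V\subseteq\pv A\malcev\pv V$ this gives $\pv A\malcev\pv V=\pv V$.

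The main obstacle is the same in both directions: the combinatorial bookkeeping relating the aperiodic ``fibre'' part of $\pv A\malcev\pv V$ to marked-product-and-Boolean (equivalently, star-free) constructions over $\pv V$-recognizable languages — appearing in the forward direction as the computation placing the Schützenberger product inside $\pv{Sl}\malcev\pv V$, and in the backward direction as the wreath product principle applied to the aperiodic relational morphism $\tau$. These are precisely the ingredients worked out, in the far more general setting of varieties of stamps, in~\cite{Chaubard&Pin&Straubing:2006} (and for monoids already in~\cite{Straubing:1979a}); accordingly the cleanest write-up is to quote those results, the point of the present statement being to record the semigroup version in the form needed for our purposes.
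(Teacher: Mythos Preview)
The paper does not prove this theorem at all: it is stated as a known result, attributed to Chaubard, Pin and Straubing~\cite{Chaubard&Pin&Straubing:2006} (with the monoid antecedent in Straubing~\cite{Straubing:1979a}), and used as a black box. Your proposal is consistent with this, since you ultimately conclude that ``the cleanest write-up is to quote those results''; the sketch you give of the Sch\"utzenberger-product / relational-morphism / wreath-product-principle argument is the standard one underlying the cited references and is correct in outline, though the ``key point'' in the backward direction (expressing $\tilde\varphi^{-1}(r)$ as a Boolean combination of marked products of $\pv V$-recognizable languages via the aperiodic fibre) is precisely where the real work lies and is only gestured at. In short, there is nothing to compare: the paper cites, and your write-up ends by recommending the same citation.
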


 \begin{remark}
    \label{r:concatenation-closed-gV-versus-V}
    If $\pv V$ is a concatenation-closed pseudovariety of semigroupoids,
    then the pseudovariety of semigroups $\pv W=\pv V\cap\pv S$ is concatenation-closed (cf.~Remark~\ref{r:recognizable-languages-trace}).
    Therefore, if $\pv V$ is a concatenation-closed pseudovariety of semigroupoids, then we have
    $\pv A\malcev\pv W=\pv W$ by Theorem~\ref{t:concatenation-malcev}, whence $\pv A\subseteq\pv V$, and in particular $\pv{N}\subseteq \pv V$.
 \end{remark}

 We can now deduce easily from Theorem~\ref{t:mult-open-in-free-profinite} the following characterization of concatenation-closed pseudovarieties of semigroupoids.

\begin{theorem}
  \label{t:sgpd-concatenation-closed-pseudovarieties}
  Let \pv V be a pseudovariety of semigroupoids. Then, the
  following conditions are equivalent:
  \begin{enumerate}
  \item \label{item:sgpd-concatenation-closed-pseudovarieties-1}
    $\pv V$ is concatenation-closed;
  \item \label{item:sgpd-concatenation-closed-pseudovarieties-2}
    for every finite-vertex graph $A$, the set of \pv V-recognizable languages over $A$ is closed under concatenation;
  \item \label{item:sgpd-concatenation-closed-pseudovarieties-3}
    $\pv V\supseteq \pv N$ and the multiplication in $\Om AV$ is open for every finite-vertex graph~$A$;
  \item \label{item:sgpd-concatenation-closed-pseudovarieties-4}
        $\pv V\supseteq \pv N$ and the multiplication in $\Om AV$ is open for every finite graph~$A$.
  \end{enumerate}
\end{theorem}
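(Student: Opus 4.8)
The plan is to establish all four conditions equivalent by running the cycle of implications \ref{item:sgpd-concatenation-closed-pseudovarieties-1}$\Rightarrow$\ref{item:sgpd-concatenation-closed-pseudovarieties-4}$\Rightarrow$\ref{item:sgpd-concatenation-closed-pseudovarieties-3}$\Rightarrow$\ref{item:sgpd-concatenation-closed-pseudovarieties-2}$\Rightarrow$\ref{item:sgpd-concatenation-closed-pseudovarieties-1}, with essentially all the content outsourced to Theorem~\ref{t:mult-open-in-free-profinite} together with Remark~\ref{r:concatenation-closed-gV-versus-V}. The only place requiring any care is the interplay between statements quantified over \emph{finite} graphs and statements quantified over \emph{finite-vertex} graphs. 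In one direction this is harmless, since a finite graph is in particular finite-vertex, so any clause about all finite-vertex graphs specializes to all finite graphs. In the other direction, the point is that every \emph{finite retract} of a finite-vertex graph is itself a finite graph; this is exactly what lets us verify condition~\ref{item:mult-open-in-free-profinite-4} of Theorem~\ref{t:mult-open-in-free-profinite} from a hypothesis stated only for finite graphs.

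Concretely, for \ref{item:sgpd-concatenation-closed-pseudovarieties-1}$\Rightarrow$\ref{item:sgpd-concatenation-closed-pseudovarieties-4} I would first use Remark~\ref{r:concatenation-closed-gV-versus-V} to record that $\pv N\subseteq\pv V$, so that Theorem~\ref{t:mult-open-in-free-profinite} applies; then for any finite graph $A$, being concatenation-closed is precisely condition~\ref{item:mult-open-in-free-profinite-1} of that theorem for $A$, and the equivalence \ref{item:mult-open-in-free-profinite-1}$\Leftrightarrow$\ref{item:mult-open-in-free-profinite-2} gives that the multiplication in $\Om AV$ is open, which is~\ref{item:sgpd-concatenation-closed-pseudovarieties-4}. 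For \ref{item:sgpd-concatenation-closed-pseudovarieties-4}$\Rightarrow$\ref{item:sgpd-concatenation-closed-pseudovarieties-3}, the hypothesis already includes $\pv N\subseteq\pv V$; given an arbitrary finite-vertex graph $A$, every finite retract $B$ of $A$ is a finite graph, so by hypothesis the multiplication in $\Om BV$ is open, which is condition~\ref{item:mult-open-in-free-profinite-4} of Theorem~\ref{t:mult-open-in-free-profinite} for $A$, and \ref{item:mult-open-in-free-profinite-4}$\Leftrightarrow$\ref{item:mult-open-in-free-profinite-2} then yields open multiplication in $\Om AV$.

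For \ref{item:sgpd-concatenation-closed-pseudovarieties-3}$\Rightarrow$\ref{item:sgpd-concatenation-closed-pseudovarieties-2}, one again has $\pv N\subseteq\pv V$, and for each finite-vertex graph $A$ the equivalence \ref{item:mult-open-in-free-profinite-2}$\Leftrightarrow$\ref{item:mult-open-in-free-profinite-1} of Theorem~\ref{t:mult-open-in-free-profinite} turns open multiplication in $\Om AV$ into closure of the $\pv V$-recognizable languages over $A$ under concatenation. Finally \ref{item:sgpd-concatenation-closed-pseudovarieties-2}$\Rightarrow$\ref{item:sgpd-concatenation-closed-pseudovarieties-1} is immediate, since finite graphs are finite-vertex graphs. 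I expect no genuine obstacle: the whole argument is bookkeeping, and the single thing to stay vigilant about is having $\pv N\subseteq\pv V$ in hand before each invocation of Theorem~\ref{t:mult-open-in-free-profinite} — which is built into the hypotheses of~\ref{item:sgpd-concatenation-closed-pseudovarieties-3} and~\ref{item:sgpd-concatenation-closed-pseudovarieties-4}, and is supplied by Remark~\ref{r:concatenation-closed-gV-versus-V} in the case of~\ref{item:sgpd-concatenation-closed-pseudovarieties-1}.
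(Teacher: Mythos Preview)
Your proposal is correct and essentially the same as the paper's proof: both reduce everything to Theorem~\ref{t:mult-open-in-free-profinite} together with Remark~\ref{r:concatenation-closed-gV-versus-V}, the only difference being the order of the cycle (the paper runs \ref{item:sgpd-concatenation-closed-pseudovarieties-2}$\Rightarrow$\ref{item:sgpd-concatenation-closed-pseudovarieties-1}$\Rightarrow$\ref{item:sgpd-concatenation-closed-pseudovarieties-3}$\Rightarrow$\ref{item:sgpd-concatenation-closed-pseudovarieties-4}$\Rightarrow$\ref{item:sgpd-concatenation-closed-pseudovarieties-2}). The finite-retract observation you isolate for \ref{item:sgpd-concatenation-closed-pseudovarieties-4}$\Rightarrow$\ref{item:sgpd-concatenation-closed-pseudovarieties-3} is exactly the content the paper uses (implicitly) in its step \ref{item:sgpd-concatenation-closed-pseudovarieties-4}$\Rightarrow$\ref{item:sgpd-concatenation-closed-pseudovarieties-2}.
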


\begin{proof} The
  implication~\ref{item:sgpd-concatenation-closed-pseudovarieties-2}$\Rightarrow$\ref{item:sgpd-concatenation-closed-pseudovarieties-1}
  is trivial. If $\pv V$ is concatenation-closed, then $\pv V$
  contains~$\pv N$ by Remark~\ref{r:concatenation-closed-gV-versus-V},
  and so the
  implication~\ref{item:sgpd-concatenation-closed-pseudovarieties-1}$\Rightarrow$\ref{item:sgpd-concatenation-closed-pseudovarieties-3}
  follows immediately from
  Theorem~\ref{t:mult-open-in-free-profinite}. The
  implication~\ref{item:sgpd-concatenation-closed-pseudovarieties-3}$\Rightarrow$\ref{item:sgpd-concatenation-closed-pseudovarieties-4}
  is trivial. Finally, the
  implication~\ref{item:sgpd-concatenation-closed-pseudovarieties-4}$\Rightarrow$\ref{item:sgpd-concatenation-closed-pseudovarieties-2}
  also follows directly from
  Theorem~\ref{t:mult-open-in-free-profinite}.
\end{proof}

We next proceed to deduce the companion of Theorem~\ref{t:sgpd-concatenation-closed-pseudovarieties}
for pseudovarieties of semigroups.

\begin{theorem}
 \label{t:concatenation-closed-pseudovarieties}
  Let \pv V be a pseudovariety of semigroups. Then, the
  following conditions are equivalent:
  \begin{enumerate}
  \item \label{item:concatenation-closed-pseudovarieties-1}
    $\pv V$ is concatenation-closed;
  \item \label{item:concatenation-closed-pseudovarieties-2}
    for every alphabet $A$, if $K,L\subseteq A^+$ are \pv V-recognizable, then so is $KL\subseteq A^+$;
  \item \label{item:concatenation-closed-pseudovarieties-3}
    $\pv V\supseteq \pv N$ and the multiplication in $\Om AV$ is open for every alphabet $A$;
  \item \label{item:concatenation-closed-pseudovarieties-4}
     $\pv V\supseteq \pv N$ and the multiplication in $\Om AV$ is open for every finite alphabet~$A$;
  \end{enumerate}
\end{theorem}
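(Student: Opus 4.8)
The plan is to derive Theorem~\ref{t:concatenation-closed-pseudovarieties} from its semigroupoid counterpart, Theorem~\ref{t:mult-open-in-free-profinite}, by specialising the latter to one-vertex graphs and to the pseudovariety of semigroupoids $\glob \pv V$. Two elementary translations make this work. First, by Remark~\ref{r:om-global-V-alphabet} we have $\Om AV=\Om A{\glob V}$ for every alphabet $A$, regarded as a one-vertex (hence finite-vertex) graph, so ``the multiplication in $\Om AV$ is open'' is verbatim the corresponding assertion for $\glob \pv V$, and $\pv N\subseteq\pv V$ forces $\pv N\subseteq\glob \pv V$. Second, for every alphabet $A$ a language $L\subseteq A^+$ is $\pv V$-recognizable if and only if it is $\glob \pv V$-recognizable: one direction is clear from $\pv V\subseteq\glob \pv V$, while conversely a semigroupoid homomorphism $\varphi\from A^+\to F$ with $F\in\glob \pv V$ recognizing $L$ sends the unique vertex of $A^+$ to some vertex $w$ of $F$ and, since all edges of $A^+$ are loops, maps $E(A^+)$ into the local semigroup $F(w)$; because $\glob \pv V\subseteq\ell\pv V$ we have $F(w)\in\pv V$, and corestricting $\varphi$ to $F(w)$ exhibits $L$ as $\pv V$-recognizable. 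In particular, condition~\ref{item:mult-open-in-free-profinite-1} of Theorem~\ref{t:mult-open-in-free-profinite}, applied to the one-vertex graph $A$ and to $\glob \pv V$, says exactly that the $\pv V$-recognizable languages over the alphabet $A$ are closed under concatenation.

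With these translations in hand, I would prove the cycle \ref{item:concatenation-closed-pseudovarieties-2}$\Rightarrow$\ref{item:concatenation-closed-pseudovarieties-1}$\Rightarrow$\ref{item:concatenation-closed-pseudovarieties-4}$\Rightarrow$\ref{item:concatenation-closed-pseudovarieties-3}$\Rightarrow$\ref{item:concatenation-closed-pseudovarieties-2}. The implication \ref{item:concatenation-closed-pseudovarieties-2}$\Rightarrow$\ref{item:concatenation-closed-pseudovarieties-1} is trivial, finite alphabets being alphabets. For \ref{item:concatenation-closed-pseudovarieties-1}$\Rightarrow$\ref{item:concatenation-closed-pseudovarieties-4}: if $\pv V$ is concatenation-closed then Theorem~\ref{t:concatenation-malcev} gives $\pv A\malcev\pv V=\pv V$, whence $\pv N\subseteq\pv A\subseteq\pv A\malcev\pv V=\pv V$; and for each finite alphabet $A$ the translations above turn the hypothesis into condition~\ref{item:mult-open-in-free-profinite-1} of Theorem~\ref{t:mult-open-in-free-profinite} for the one-vertex graph $A$ and $\glob \pv V$, so its condition~\ref{item:mult-open-in-free-profinite-2} holds, i.e.\ multiplication in $\Om A{\glob V}=\Om AV$ is open.

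For \ref{item:concatenation-closed-pseudovarieties-4}$\Rightarrow$\ref{item:concatenation-closed-pseudovarieties-3}: the finite retracts of a one-vertex graph $A$ are, up to isomorphism, exactly the one-vertex graphs on finite subsets of $A$, so hypothesis~\ref{item:concatenation-closed-pseudovarieties-4} (which includes $\pv N\subseteq\pv V$, hence $\pv N\subseteq\glob \pv V$) guarantees that condition~\ref{item:mult-open-in-free-profinite-4} of Theorem~\ref{t:mult-open-in-free-profinite} holds for every alphabet $A$ relative to $\glob \pv V$; therefore its condition~\ref{item:mult-open-in-free-profinite-2} holds and multiplication in $\Om AV=\Om A{\glob V}$ is open for every alphabet $A$. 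For \ref{item:concatenation-closed-pseudovarieties-3}$\Rightarrow$\ref{item:concatenation-closed-pseudovarieties-2}: given~\ref{item:concatenation-closed-pseudovarieties-3}, for every alphabet $A$ condition~\ref{item:mult-open-in-free-profinite-2} of Theorem~\ref{t:mult-open-in-free-profinite} holds for the graph $A$ and $\glob \pv V$, hence so does condition~\ref{item:mult-open-in-free-profinite-1}, and by the recognizability translation this is precisely the assertion that the $\pv V$-recognizable languages over $A$ are closed under concatenation.

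I do not expect a serious obstacle: the theorem is essentially a repackaging of Theorem~\ref{t:mult-open-in-free-profinite}, in the same spirit as Theorem~\ref{t:sgpd-concatenation-closed-pseudovarieties}. The only step that requires some care is the second translation, that $\pv V$-recognizability and $\glob \pv V$-recognizability agree over one-vertex graphs, which rests on the inclusion $\glob \pv V\subseteq\ell\pv V$, together with the bookkeeping of identifying the (finite) retracts of a one-vertex graph with its (finite) sub-alphabets so that conditions~\ref{item:mult-open-in-free-profinite-4} and~\ref{item:mult-open-in-free-profinite-2} of Theorem~\ref{t:mult-open-in-free-profinite} match the passage between arbitrary and finite alphabets. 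One must also not forget, in \ref{item:concatenation-closed-pseudovarieties-1}$\Rightarrow$\ref{item:concatenation-closed-pseudovarieties-4}, to extract $\pv N\subseteq\pv V$ from concatenation-closedness through Theorem~\ref{t:concatenation-malcev}, since that hypothesis does not mention $\pv N$ explicitly.
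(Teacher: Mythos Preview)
Your proposal is correct and follows essentially the same route as the paper: both prove the cycle \ref{item:concatenation-closed-pseudovarieties-2}$\Rightarrow$\ref{item:concatenation-closed-pseudovarieties-1}$\Rightarrow$\ref{item:concatenation-closed-pseudovarieties-4}$\Rightarrow$\ref{item:concatenation-closed-pseudovarieties-3}$\Rightarrow$\ref{item:concatenation-closed-pseudovarieties-2} by passing to $\glob\pv V$ via Remark~\ref{r:om-global-V-alphabet}, extracting $\pv N\subseteq\pv V$ from Theorem~\ref{t:concatenation-malcev}, and then invoking the relevant implications of Theorem~\ref{t:mult-open-in-free-profinite}. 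The only difference is that you spell out the equivalence of $\pv V$- and $\glob\pv V$-recognizability over alphabets (via $\glob\pv V\subseteq\ell\pv V$), whereas the paper has this recorded as Remark~\ref{r:recognizable-languages-trace} and leaves the chain \ref{item:concatenation-closed-pseudovarieties-4}$\Rightarrow$\ref{item:concatenation-closed-pseudovarieties-3}$\Rightarrow$\ref{item:concatenation-closed-pseudovarieties-2} to the reader.
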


\begin{proof}
  The implication \ref{item:concatenation-closed-pseudovarieties-2}$\Rightarrow$\ref{item:concatenation-closed-pseudovarieties-1} is trivial.

  To show the implication
  \ref{item:concatenation-closed-pseudovarieties-1}$\Rightarrow$\ref{item:concatenation-closed-pseudovarieties-4},
  observe that, assuming that
  $\ref{item:concatenation-closed-pseudovarieties-1}$ holds, then in
  view of Theorem~\ref{t:concatenation-malcev},
  we know that $\pv V$ contains $\pv A$, and in particular it contains
  $\pv N$. On the other hand, note that if $A$ is an alphabet and $\pv
  V$ is a pseudovariety of semigroups, then we have $\Om AV=\Om
  A{\glob V}$. Hence,
  \ref{item:concatenation-closed-pseudovarieties-4} follows from the
  implication $\ref{item:mult-open-in-free-profinite-1}
  \Rightarrow\ref{item:mult-open-in-free-profinite-2}$ in
  Theorem~\ref{t:mult-open-in-free-profinite}.

  The chain \ref{item:concatenation-closed-pseudovarieties-4}$\Rightarrow$\ref{item:concatenation-closed-pseudovarieties-3}$\Rightarrow$\ref{item:concatenation-closed-pseudovarieties-2} also follows clearly from Theorem~\ref{t:mult-open-in-free-profinite}.
\end{proof}

\begin{corollary}
  \label{c:sgpd-concatenation-closed-pseudovarieties}
  Let \pv V be a pseudovariety of semigroups. Then $\pv V$ is concatenation-closed if and only if $\glob\pv V$ is concatenation-closed.  
\end{corollary}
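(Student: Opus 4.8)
The plan is to prove the two implications separately, relying on the characterizations of concatenation-closedness already obtained: Theorem~\ref{t:concatenation-closed-pseudovarieties} for pseudovarieties of semigroups and Theorem~\ref{t:sgpd-concatenation-closed-pseudovarieties} for pseudovarieties of semigroupoids, together with Theorem~\ref{t:concatenation-malcev} and Proposition~\ref{p:open-in-EA-implies-open-in-global}. The implication ``$\glob\pv V$ concatenation-closed $\Rightarrow$ $\pv V$ concatenation-closed'' will be almost formal, whereas the converse genuinely needs the faithfulness machinery around $\gamma$.

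For the easy implication, I would first record the equality $\pv S\cap\glob\pv V=\pv V$. Indeed, viewing a finite semigroup as a one-vertex semigroupoid, its unique local set is itself, so $\pv V$ is contained in the pseudovariety of semigroupoids $\ell\pv V$; hence $\glob\pv V\subseteq\ell\pv V$, and the chain $\pv V\subseteq\pv S\cap\glob\pv V\subseteq\pv S\cap\ell\pv V=\pv V$ gives the claim (the last equality again by the same observation on local sets of one-vertex semigroupoids). Now assume $\glob\pv V$ is concatenation-closed. Applying Remark~\ref{r:concatenation-closed-gV-versus-V} to the pseudovariety of semigroupoids $\glob\pv V$ yields that the pseudovariety of semigroups $\pv S\cap\glob\pv V$ is concatenation-closed; by the equality just established this pseudovariety is $\pv V$, so $\pv V$ is concatenation-closed.

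For the converse, assume $\pv V$ is concatenation-closed. By Theorem~\ref{t:concatenation-malcev} we have $\pv A\malcev\pv V=\pv V$, hence $\pv A\subseteq\pv V$; in particular $\pv N\subseteq\pv V$ and, as $B_2$ is aperiodic, $B_2\in\pv V$. Let $A$ be an arbitrary finite graph, so that $E(A)$ is a finite alphabet. By Theorem~\ref{t:concatenation-closed-pseudovarieties}, the multiplication in $\Om {E(A)}V$ is open; since $B_2\in\pv V$, Proposition~\ref{p:open-in-EA-implies-open-in-global} then gives that the multiplication in $\Om A{\glob V}$ is open. As this holds for every finite graph $A$ and $\pv N\subseteq\pv V\subseteq\glob\pv V$, the implication $\ref{item:sgpd-concatenation-closed-pseudovarieties-4}\Rightarrow\ref{item:sgpd-concatenation-closed-pseudovarieties-1}$ of Theorem~\ref{t:sgpd-concatenation-closed-pseudovarieties} shows that $\glob\pv V$ is concatenation-closed.

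The only real obstacle is the forward implication: since it concerns all finite graphs and not merely one-vertex ones, it cannot be reduced to the semigroup case by restriction, and one needs a genuine transfer from open multiplication over the alphabet $E(A)$ to open multiplication over the graph $A$. This is precisely what Proposition~\ref{p:open-in-EA-implies-open-in-global} supplies, at the expense of the hypothesis $B_2\in\pv V$; so the step to check with care is that concatenation-closedness of $\pv V$ forces $\pv A\subseteq\pv V$ via Theorem~\ref{t:concatenation-malcev}, which in turn yields $B_2\in\pv V$ and $\pv N\subseteq\pv V$.
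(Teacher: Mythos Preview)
Your proof is correct and follows essentially the same route as the paper's: the ``if'' direction via $\pv S\cap\glob\pv V=\pv V$ and Remark~\ref{r:concatenation-closed-gV-versus-V}, and the ``only if'' direction by extracting $B_2\in\pv V$ from $\pv A\malcev\pv V=\pv V$, then passing from open multiplication in $\Om{E(A)}V$ to $\Om A{\glob V}$ via Proposition~\ref{p:open-in-EA-implies-open-in-global} and concluding with Theorem~\ref{t:sgpd-concatenation-closed-pseudovarieties}. The only cosmetic difference is that the paper quantifies over finite-vertex graphs (using condition~\ref{item:sgpd-concatenation-closed-pseudovarieties-3}) where you quantify over finite graphs (using condition~\ref{item:sgpd-concatenation-closed-pseudovarieties-4}); either works.
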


\begin{proof}
  Since $\pv S\cap \glob\pv V =\pv V$, the ``if'' part of the
  corollary is immediate by
  Remark~\ref{r:concatenation-closed-gV-versus-V}. Conversely, suppose
  that $\pv V$ is concatenation-closed. Then we have $\pv V=\pv
  A\malcev\pv V$, by
  Theorem~\ref{t:concatenation-closed-pseudovarieties}, thus $\pv V$
  contains $\pv A$. In particular, we have $B_2\in\pv V$. Let $A$ be a
  finite-vertex graph. Also by
  Theorem~\ref{t:concatenation-closed-pseudovarieties}, the
  multiplication in $\Om {E(A)}V$ is open. It then follows from
  Proposition~\ref{p:open-in-EA-implies-open-in-global} that the
  multiplication in $\Om A{\glob V}$ is open. Since $A$ is an
  arbitrary finite-vertex graph,
  Theorem~\ref{t:sgpd-concatenation-closed-pseudovarieties} then
  implies that $\glob\pv V$ is concatenation-closed.
\end{proof}

The preceding corollary suggests the following problem, left open.

\begin{problem}
  \label{prob:characterize-concatenation-closed}
  If $\pv V$ is a concatenation-closed pseudovariety of semigroups,
  which pseudovarieties of semigroupoids in the interval
  $(\pv{\glob V},\ell\pv V]$ are concatenation-closed?
\end{problem}

While this problem is vacuous for local concatenation-closed
pseudovarieties, there are many non-local pseudovarieties of interest,
such as the \emph{complexity pseudovarieties} $\pv {C}_n$ for $n\ge1$,
cf.~\cite[Section 4.16]{Rhodes&Steinberg:2009qt}
or~\cite{Rhodes&Steinberg:2006}.

We highlight the following result which will be used in subsequent proofs.

\begin{corollary}
  \label{c:open-mult-free-profinite}
  For every alphabet (respectively, finite-vertex graph) $A$ and
  every concatenation-closed pseudovariety of semigroups (respectively, semigroupoids) \pv V, every convergent net in~$\Om AV$ is
  fully factorizable.
\end{corollary}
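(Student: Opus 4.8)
The plan is to obtain the statement as an immediate combination of the characterizations of concatenation-closed pseudovarieties established above with the topological criterion for open multiplication given by Corollary~\ref{c:open-mult-fully-factorizable}. No new argument is really needed: everything has already been done, and what remains is to assemble the pieces.

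First I would dispose of the semigroupoid case. Let $\pv V$ be a concatenation-closed pseudovariety of semigroupoids and let $A$ be a finite-vertex graph. By the implication \ref{item:sgpd-concatenation-closed-pseudovarieties-1}$\Rightarrow$\ref{item:sgpd-concatenation-closed-pseudovarieties-3} of Theorem~\ref{t:sgpd-concatenation-closed-pseudovarieties}, we have $\pv N\subseteq\pv V$ and the multiplication in $\Om AV$ is an open mapping. Since $\Om AV$ is a topological semigroupoid, Corollary~\ref{c:open-mult-fully-factorizable} then says precisely that every convergent net in $\Om AV$ is fully factorizable, which is the desired conclusion.

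The semigroup case is handled in exactly the same way, replacing Theorem~\ref{t:sgpd-concatenation-closed-pseudovarieties} by Theorem~\ref{t:concatenation-closed-pseudovarieties}: if $\pv V$ is a concatenation-closed pseudovariety of semigroups and $A$ is any alphabet, then the implication \ref{item:concatenation-closed-pseudovarieties-1}$\Rightarrow$\ref{item:concatenation-closed-pseudovarieties-3} of that theorem gives $\pv N\subseteq\pv V$ and that the multiplication in $\Om AV$ is open, and Corollary~\ref{c:open-mult-fully-factorizable} again applies to the topological semigroup $\Om AV$.

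There is essentially no obstacle here; the only point worth a word is that the definition of a concatenation-closed pseudovariety speaks only about \emph{finite} alphabets (respectively, \emph{finite} graphs), while the corollary concerns arbitrary alphabets and arbitrary finite-vertex graphs. This apparent gap has, however, already been absorbed into Theorems~\ref{t:concatenation-closed-pseudovarieties} and~\ref{t:sgpd-concatenation-closed-pseudovarieties}, whose conclusions on openness of multiplication are stated for all alphabets (respectively, all finite-vertex graphs), and which in turn rely on Theorem~\ref{t:mult-open-in-free-profinite} together with Corollary~\ref{c:concatenation-finite-retract-open} to propagate the property from finite objects to arbitrary ones. So the proof reduces to a two-line invocation of those earlier results.
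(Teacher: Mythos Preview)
Your proof is correct and follows exactly the same approach as the paper's own proof, which simply says to combine Corollary~\ref{c:open-mult-fully-factorizable} with Theorems~\ref{t:concatenation-closed-pseudovarieties} and~\ref{t:sgpd-concatenation-closed-pseudovarieties}.
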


\begin{proof}
  Combine Corollary~\ref{c:open-mult-fully-factorizable} with, respectively, Theorems~\ref{t:concatenation-closed-pseudovarieties} and~\ref{t:sgpd-concatenation-closed-pseudovarieties}.
\end{proof}

The proofs of both parts of the next proposition follow the same lines of reasoning, employed in
previous works, to establish the corresponding special cases pertaining to pseudovarieties
of semigroups and finite alphabets; see respectively~\cite[Proposition~2.4]{Almeida&ACosta:2007a} and~\cite[Lemma~3.8]{ACosta&Steinberg:2011}.
The second part is an application of Theorem~\ref{t:sgpd-concatenation-closed-pseudovarieties}.

\begin{proposition}
  \label{p:closure-of-factorial}
  Let $A$ be a finite-vertex graph.
  Consider a pseudovariety of semigroupoids $\pv V$ containing $\pv N$.
  Let $L\subseteq A^+$ be a factorial language.
  Suppose that at least one of the following conditions holds:
  \begin{enumerate}
  \item $L$ is $\pv V$-recognizable;\label{item:closure-of-factorial-1}
  \item \pv V is concatenation-closed.\label{item:closure-of-factorial-2}
  \end{enumerate}
  Then the closure $\clos{V}(L)$ is a factorial subset of $\Om AV$.
\end{proposition}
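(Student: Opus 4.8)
The plan is to unwind the definition of factorial: it suffices to show that for every composable pair $(u,v)\in D(\Om AV)$ with $uv\in\clos V(L)$, one has $u,v\in\clos V(L)$. In both cases I would realize $uv$ as a limit of a net of paths lying in $L$ and carrying compatible factorizations, and then apply the factoriality of $L$ as a \emph{language}, i.e.\ inside $A^+$. The point to watch — and the reason a cruder argument does not suffice — is that factoriality is not preserved by homomorphic images, so one cannot simply transport $L$ to a finite recognizing semigroupoid; the approximating factorizations must be kept inside $A^+$, where Corollary~\ref{c:A+-as-filter} (factoriality of $A^+$ in $\Om AV$) and the hypothesis on $L$ apply. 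Note that $\pv N\subseteq\pv V$ in both cases (in case~\ref{item:closure-of-factorial-2} by Remark~\ref{r:concatenation-closed-gV-versus-V}), so Proposition~\ref{p:A+-embeds-isolated} lets us treat $A^+$ as a dense discrete subsemigroupoid of $\Om AV$; and since $A$ is finite-vertex, $D(\Om AV)$ is clopen, a fact used repeatedly to pass between composability in $\Om AV$ and in $A^+$.

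For case~\ref{item:closure-of-factorial-1}, I would fix a homomorphism $\varphi\from A^+\to F$ with $F\in\pv V$ and $L=\varphi^{-1}(\varphi(L))$, extend it to a continuous homomorphism $\pro\varphi\from\Om AV\to F$ (the universal property applies since $F$, being finite and in $\pv V$, is pro-$\pv V$), and observe that $\pro\varphi^{-1}(\varphi(L))$ is clopen and contains $L$, hence contains $\clos V(L)$. Given $(u,v)\in D(\Om AV)$ with $uv\in\clos V(L)$, I would use the density of $A^+\times A^+$ in $\Om AV\times\Om AV$ to pick a net $(u_i,v_i)\to(u,v)$ in $A^+\times A^+$; as $D(\Om AV)$ is clopen, $(u_i,v_i)\in D(A^+)$ eventually, so $u_iv_i\in A^+$ and $u_iv_i\to uv$ by continuity of multiplication. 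Since $F$ is discrete and $\varphi(u_iv_i)=\pro\varphi(u_iv_i)\to\pro\varphi(uv)\in\varphi(L)$, we get $u_iv_i\in\varphi^{-1}(\varphi(L))=L$ eventually; factoriality of $L$ then forces $u_i,v_i\in L$ eventually, and passing to the limit yields $u,v\in\clos V(L)$.

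For case~\ref{item:closure-of-factorial-2}, the extra ingredient is Corollary~\ref{c:open-mult-free-profinite}: every convergent net in $\Om AV$ is fully factorizable. Given $(u,v)\in D(\Om AV)$ with $uv\in\clos V(L)$, I would choose a net $(w_i)$ in $L$ converging to $uv$ and apply full factorizability to the factorization $\lim w_i=uv$, obtaining a subnet $(w_{i_j})$ together with a net $(u_j,v_j)\to(u,v)$ in $D(\Om AV)$ such that $w_{i_j}=u_jv_j$ for all $j$. Since $w_{i_j}\in L\subseteq A^+$ and $A^+$ is factorial in $\Om AV$ by Corollary~\ref{c:A+-as-filter}, each $u_j,v_j$ lies in $A^+$; and since $w_{i_j}=u_jv_j\in L$ with $u_j,v_j\in A^+$, factoriality of $L$ gives $u_j,v_j\in L$. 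Passing to the limit, $u=\lim u_j\in\clos V(L)$ and $v=\lim v_j\in\clos V(L)$, as required. The only genuine obstacle in either case is the one flagged in the first paragraph — funnelling all the approximating data through $A^+$ so that the language-level factoriality of $L$ becomes usable — and once the appropriate nets are in place both arguments are short.
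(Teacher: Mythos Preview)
Your proposal is correct and follows essentially the same approach as the paper. The only cosmetic difference is in case~\ref{item:closure-of-factorial-1}: the paper invokes Theorem~\ref{t:V-recognizability} to say that $\clos V(L)$ is open and then uses that $\clos V(L)\cap A^+=L$, whereas you unwind this by working directly with the extended recognizing homomorphism $\pro\varphi$ and the clopen set $\pro\varphi^{-1}(\varphi(L))$; these are the same argument at different levels of packaging, and case~\ref{item:closure-of-factorial-2} is identical to the paper's.
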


\begin{proof}
  Let $w\in\clos V(L)$. Take $u,v\in\Om AV$
  such that $w=uv$. We wish to show that $u,v\in\clos V(L)$.

  Case \ref{item:closure-of-factorial-1}. 
  As $L$ is $\pv V$-recognizable, by Theorem~\ref{t:V-recognizability}
  the closure $\clos V(L)$ is an open subset of $\Om AV$.
  As $A^+$ is dense in $\Om AV$, we may take a net $(u_i,v_i)_{i\in I}$ of elements of $D(A^+)$ converging in $\Om AV\times \Om AV$
  to $(u,v)$.
  Since $\lim u_iv_i=w$ and $\clos V(L)$ is open, there is
  $k\in I$ such that $i\geq k$ implies $u_iv_i\in \clos V(L)$.
  Take $i\geq k$. Since the elements of $A^+$ are isolated points of $\Om AV$, we have $u_iv_i\in L$.
  As $L$ is factorial, it follows that $u_i,v_i\in L$ for every $i\geq k$. This yields $u,v\in \clos V(L)$.

  Case \ref{item:closure-of-factorial-2}. Let $(w_i)_{i\in I}$ be a
  net in~$L$ converging to $w$. By
  Corollary~\ref{c:open-mult-free-profinite}, there is a net
  $(u_j,v_j)_{j\in J}$ in $\Om AV\times\Om AV$ converging to
  $(u,v)$, and a subnet $(w_{i_j})_{j\in J}$ of $(w_i)_{i\in I}$ such that
  $w_{i_j}=u_jv_j$ for each $j\in J$. Since $u_j$ and $v_j$ are
  factors in $\Om AV$ of~$w_{i_j}$, a member of~$A^+$, they
  both belong to $A^+$ by Corollary~\ref{c:A+-as-filter}. As $L$ is
  factorial in~$A^+$, it follows that $u_j,v_j\in L$ for every $j\in
  J$, thus $u,v\in~\clos V(L)$.
\end{proof}

\section{Equidivisible pseudovarieties of semigroupoids}
\label{sec:equid-pseud-semigr}

A semigroupoid  $S$ is said to be \emph{equidivisible}~\cite{McKnight&Storey:1969}
if whenever we have edges $u,v,x,y\in S$ such that $uv=xy$,
there is an edge $t\in S^I$ such that at least one of the following conditions holds:
\begin{itemize}
\item $ut=x$ and $v=ty$;
\item $xt=u$ and $y=tv$.
\end{itemize}

The following definition was introduced by the first two
authors~\cite{Almeida&ACosta:2017}: a pseudovariety of semigroups $\pv
V$ is \emph{equidivisible} when $\Om AV$ is equidivisible for every
finite alphabet~$A$. Every pseudovariety of semigroups contained in
the pseudovariety $\pv{CS}$ of all finite completely simple semigroups
is an equidivisible pseudovariety, as every completely simple
semigroup is equidivisible~\cite{McKnight&Storey:1969}. For other
pseudovarieties, several characterizations are given in~\cite[Theorem
6.2]{Almeida&ACosta:2023}, with some appearing in earlier work~\cite{Almeida&ACosta:2017}. 
We highlight some of these characterizations in the following
theorem. We recall that $\pv{\loc \pv I}$ is the pseudovariety of
all finite semigroups $S$ such that, for every idempotent $e$, one has
$eSe=e$. Note that $\pv{\loc \pv I}$ contains $\pv N$.

\begin{theorem}
  \label{t:equidivisible-pseudovarieties}
  The following conditions are equivalent for a pseudovariety of semigroups~\pv V not contained in~\pv {CS}:
  \begin{enumerate}
  \item \label{item:equidivisible-pseudovarieties-1}
    $\pv V$ is equidivisible;
  \item\label{item:equidivisible-pseudovarieties-2} for every alphabet $A$, the semigroup \Om AV is equidivisible;
  \item\label{item:equidivisible-pseudovarieties-3} the equality $\pv{LI\malcev V}=\pv V$ holds.
 \end{enumerate}
 Moreover, the pseudovariety $\pv{\loc \pv I}$ is the least equidivisible pseudovariety of semigroups not contained in $\pv{CS}$.
\end{theorem}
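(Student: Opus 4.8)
I would take the equivalence of~\ref{item:equidivisible-pseudovarieties-1} and~\ref{item:equidivisible-pseudovarieties-3}, along with the concluding minimality assertion about $\pv{\loc \pv I}$, directly from~\cite[Theorem~6.2]{Almeida&ACosta:2023} (with several parts already in~\cite{Almeida&ACosta:2017}): those results are phrased in terms of \emph{finite} alphabets, which is exactly what the definition of ``$\pv V$ is equidivisible'' refers to, so no additional work is needed there. Since every finite alphabet is an alphabet, the implication~\ref{item:equidivisible-pseudovarieties-2}$\Rightarrow$\ref{item:equidivisible-pseudovarieties-1} is immediate. Hence the only substantial point is~\ref{item:equidivisible-pseudovarieties-1}$\Rightarrow$\ref{item:equidivisible-pseudovarieties-2}: upgrading equidivisibility of $\Om AV$ from finite alphabets to an arbitrary alphabet~$B$.

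For this I would view $B$ as a one-vertex graph and invoke Corollary~\ref{c:inverse-limit-finite-retracts} (applied to $\glob\pv V$, using Remark~\ref{r:om-global-V-alphabet} to identify $\Om CV=\Om C{\glob V}$ for alphabets~$C$): one has $\Om BV\cong\varprojlim_{\theta\in\Theta}\Om{B/\theta}V$, the limit running over the finite-index partitions~$\theta$ of $B$, with connecting homomorphisms $\pro{q}_{\rho/\theta}\from\Om{B/\theta}V\to\Om{B/\rho}V$ whenever $\rho$ is coarser than~$\theta$; this persists after adjoining local identities at the single vertex, giving $(\Om BV)^I\cong\varprojlim_\theta(\Om{B/\theta}V)^I$ with identity-preserving connecting homomorphisms. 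The crucial observation is that each quotient $B/\theta$ is a \emph{finite} alphabet, so by hypothesis~\ref{item:equidivisible-pseudovarieties-1} every $\Om{B/\theta}V$ is equidivisible. Now fix $u,v,x,y\in\Om BV$ with $uv=xy$ and, writing $u_\theta,v_\theta,x_\theta,y_\theta$ for the images in $\Om{B/\theta}V$, set
\[
  Y_\theta=\{\,t\in(\Om{B/\theta}V)^I:u_\theta t=x_\theta,\ v_\theta=t y_\theta\,\},\qquad
  Z_\theta=\{\,t\in(\Om{B/\theta}V)^I:x_\theta t=u_\theta,\ y_\theta=t v_\theta\,\}.
\]
By continuity of multiplication, $Y_\theta$ and $Z_\theta$ are closed, hence compact; equidivisibility of $\Om{B/\theta}V$ gives $Y_\theta\cup Z_\theta\neq\emptyset$ for every $\theta$; the connecting homomorphisms satisfy $\pro{q}_{\rho/\theta}(Y_\theta)\subseteq Y_\rho$ and $\pro{q}_{\rho/\theta}(Z_\theta)\subseteq Z_\rho$; and $\{\,t\in(\Om BV)^I:ut=x,\ v=ty\,\}=\varprojlim_\theta Y_\theta$, while $\{\,t\in(\Om BV)^I:xt=u,\ y=tv\,\}=\varprojlim_\theta Z_\theta$.

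The final step is to establish the dichotomy that either all the $Y_\theta$ are nonempty or all the $Z_\theta$ are nonempty. Indeed, if $Y_{\theta_0}=\emptyset$ and $Z_{\theta_1}=\emptyset$ for some $\theta_0,\theta_1\in\Theta$, I would pick a finite-index partition $\theta$ refining both $\theta_0$ and $\theta_1$; then $Y_\theta\cup Z_\theta\neq\emptyset$, but the image of $Y_\theta$ under $\pro{q}_{\theta_0/\theta}$ lies in $Y_{\theta_0}$ and the image of $Z_\theta$ under $\pro{q}_{\theta_1/\theta}$ lies in $Z_{\theta_1}$, so one of $Y_{\theta_0}$, $Z_{\theta_1}$ would be nonempty, a contradiction. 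Granting the dichotomy, say all $Y_\theta$ are nonempty; since an inverse limit of nonempty compact Hausdorff spaces over a directed set is nonempty (a standard finite-intersection argument inside the compact product $\prod_\theta(\Om{B/\theta}V)^I$; cf.~\cite[Section~3.2]{Engelking:1989}), $\varprojlim_\theta Y_\theta$ is nonempty, so there is $t\in(\Om BV)^I$ with $ut=x$ and $v=ty$. Thus $\Om BV$ is equidivisible.

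The step I expect to be the real obstacle is this last dichotomy. Equidivisibility is a disjunction, so a naive appeal to compactness applied to the sets $Y_\theta\cup Z_\theta$ yields, for each $\theta$, only a witness of \emph{one} of the two possible shapes, with no coherence across~$\theta$, and an inverse limit of the unions need not lie in a single one of the two branches globally. Separating the two families $(Y_\theta)_\theta$ and $(Z_\theta)_\theta$ and using directedness of $\Theta$ to force a single contradiction is what makes the argument work; the remainder is routine manipulation of inverse limits of compact semigroups. Note, finally, that this argument does not use the hypothesis $\pv V\not\subseteq\pv{CS}$: that restriction enters only through the equivalence with~\ref{item:equidivisible-pseudovarieties-3} and the minimality of $\pv{\loc \pv I}$.
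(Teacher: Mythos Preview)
Your proposal is correct and matches the paper's approach. The paper does not give a separate proof of this theorem: it takes~\ref{item:equidivisible-pseudovarieties-1}$\Leftrightarrow$\ref{item:equidivisible-pseudovarieties-3} and the minimality of $\pv{\loc\pv I}$ from~\cite[Theorem~6.2]{Almeida&ACosta:2023}, exactly as you do, and the passage~\ref{item:equidivisible-pseudovarieties-1}$\Rightarrow$\ref{item:equidivisible-pseudovarieties-2} is the one-vertex case of Proposition~\ref{p:equidivisible-pseudovarieties-of-semigroupoids}, whose proof combines Corollary~\ref{c:inverse-limit-finite-retracts} with Lemma~\ref{p:inverse-limit-of-equidivisible-semigroupoids}. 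Your argument is precisely this, with two cosmetic differences: the paper's lemma handles the disjunction by noting that one of the two index sets $I_1,I_2$ is cofinal (equivalent to your common-refinement dichotomy), and it lifts the witnesses to closed subsets $T_i\subseteq S^I$ using surjectivity of the projections and then intersects there, whereas you keep the witness sets $Y_\theta,Z_\theta$ at each level and invoke nonemptiness of inverse limits of nonempty compact Hausdorff spaces. Both routes are standard and yield the same conclusion.
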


By Theorem~\ref{t:concatenation-closed-pseudovarieties}, a pseudovariety of semigroups $\pv V$ is concatenation-closed if and only if
$\pv{A\malcev V}=\pv V$. Since $\pv {LI}\subseteq \pv A$, Theorem~\ref{t:equidivisible-pseudovarieties} entails the following corollary.

\begin{corollary}
  \label{c:concatenation-closed-implies-equidivisible}
  Every concatenation-closed pseudovariety of semigroups is equidivisible.
\end{corollary}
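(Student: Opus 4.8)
The plan is to deduce this from Theorem~\ref{t:concatenation-closed-pseudovarieties} (equivalently Theorem~\ref{t:concatenation-malcev}) together with Theorem~\ref{t:equidivisible-pseudovarieties}, using the inclusion $\pv{LI}\subseteq\pv A$ and the monotonicity of the Mal'cev product. First I would handle the trivial case: if $\pv V\subseteq\pv{CS}$, then $\pv V$ is equidivisible because every finite completely simple semigroup is equidivisible, so $\Om AV$ is equidivisible for every finite alphabet~$A$, and there is nothing to prove. So I may assume $\pv V\not\subseteq\pv{CS}$, which puts us in the situation where Theorem~\ref{t:equidivisible-pseudovarieties} applies.

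Now suppose $\pv V$ is concatenation-closed. By Theorem~\ref{t:concatenation-malcev} (or Theorem~\ref{t:concatenation-closed-pseudovarieties}), we have $\pv{A}\malcev\pv V=\pv V$. Since $\pv{LI}\subseteq\pv A$, monotonicity of the Mal'cev product in its first argument gives $\pv{LI}\malcev\pv V\subseteq\pv A\malcev\pv V=\pv V$. On the other hand, $\pv V\subseteq\pv{LI}\malcev\pv V$ always holds (the paper already records that $\pv W\subseteq\pv V\malcev\pv W$). Hence $\pv{LI}\malcev\pv V=\pv V$, which is condition~\ref{item:equidivisible-pseudovarieties-3} of Theorem~\ref{t:equidivisible-pseudovarieties}. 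By the equivalence in that theorem, $\pv V$ is equidivisible.

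The only delicate point is making sure $\pv{LI}$ is indeed contained in $\pv A$: this is standard, since $\pv{LI}$ consists of finite semigroups $S$ in which $eSe=e$ for every idempotent~$e$, so every such $S$ is aperiodic (each subgroup is trivial), whence $\pv{LI}\subseteq\pv A$. I would state this inclusion explicitly and cite it as well known (it is also implicitly used in the discussion preceding the corollary). No real obstacle is expected here; the one thing to be careful about is the case split on whether $\pv V\subseteq\pv{CS}$, since Theorem~\ref{t:equidivisible-pseudovarieties} is only stated for pseudovarieties not contained in~$\pv{CS}$, and this is exactly what the sentence introducing the corollary flags by invoking both halves of Theorem~\ref{t:equidivisible-pseudovarieties}.
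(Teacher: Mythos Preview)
Your proof is correct and follows essentially the same route as the paper: use $\pv A\malcev\pv V=\pv V$ (Theorem~\ref{t:concatenation-malcev}), the inclusion $\pv{LI}\subseteq\pv A$, and then Theorem~\ref{t:equidivisible-pseudovarieties}. Your case split on $\pv V\subseteq\pv{CS}$ is harmless but unnecessary, since a concatenation-closed $\pv V$ satisfies $\pv V\supseteq\pv A$ and hence is never contained in $\pv{CS}$.
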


Analogously, let us say that a pseudovariety of semigroupoids $\pv V$ is \emph{equidivisible}
if the semigroupoid $\Om AV$ is equidivisible for every finite graph $A$. 
The next proposition shows that the assumption that the graph is finite in this definition can be weakened.

\begin{proposition}
  \label{p:equidivisible-pseudovarieties-of-semigroupoids}
  A pseudovariety of semigroupoids $\pv V$ is equidivisible if and only if,
  for every finite-vertex graph $A$, the semigroupoid $\Om AV$ is equidivisible.
\end{proposition}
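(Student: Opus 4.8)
The plan is to reduce to the finite-graph case using the inverse-limit description $\Om AV\cong\varprojlim_{\theta\in\Theta}\Om{A/\theta}V$ of Corollary~\ref{c:inverse-limit-finite-retracts}, where $\Theta$ is the directed set of finite-index graph equivalences of $A$. The point is that each quotient graph $A/\theta$ is a \emph{finite} graph: it has the same (finite) vertex set as $A$, and only finitely many edges since $\theta$ has finite index. Hence each $\Om{A/\theta}V$ is equidivisible by hypothesis, and it remains to transfer equidivisibility through the inverse limit. The converse implication in the statement is trivial, since a finite graph is in particular a finite-vertex graph.

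So fix a finite-vertex graph $A$ and edges $u,v,x,y\in\Om AV$ with $(u,v),(x,y)\in D(\Om AV)$ and $uv=xy$; write $\pi_\theta=\pro q_\theta\from\Om AV\to\Om{A/\theta}V$ for the projections and $p_{\rho\theta}=\pro q_{\rho/\theta}$ for the connecting morphisms, so that $p_{\rho\theta}\circ\pi_\theta=\pi_\rho$ whenever defined. For each $\theta$ one has $\pi_\theta(u)\pi_\theta(v)=\pi_\theta(x)\pi_\theta(y)$ in $\Om{A/\theta}V$, so equidivisibility of $\Om{A/\theta}V$ makes the set
\begin{equation*}
  W_\theta=\{\,t\in(\Om{A/\theta}V)^I : \pi_\theta(u)\,t=\pi_\theta(x)\text{ and }\pi_\theta(v)=t\,\pi_\theta(y)\,\}\cup\{\,t : \pi_\theta(x)\,t=\pi_\theta(u)\text{ and }\pi_\theta(y)=t\,\pi_\theta(v)\,\}
\end{equation*}
a nonempty subset of $(\Om{A/\theta}V)^I$; moreover $W_\theta$ is closed, hence compact, being a finite union of intersections of preimages of points under continuous maps defined on clopen subspaces (composability being a clopen condition since $V(A)$ is finite). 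Applying the homomorphism $p_{\rho\theta}^I$ to either defining pair of equations shows $p_{\rho\theta}^I(W_\theta)\subseteq W_\rho$, so $(W_\theta)_{\theta\in\Theta}$ with the restricted connecting morphisms is an inverse system of nonempty compact Hausdorff spaces over the directed set $\Theta$; its inverse limit is therefore nonempty. Since all connecting morphisms carry the adjoined local identity at a vertex to the corresponding one, $\varprojlim_{\theta\in\Theta}(\Om{A/\theta}V)^I=(\Om AV)^I$, so we obtain $t=(t_\theta)_{\theta\in\Theta}\in(\Om AV)^I$ with $t_\theta\in W_\theta$ for all $\theta$.

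Finally, partition $\Theta=\Theta_L\cup\Theta_R$ according to which of the two defining conditions of $W_\theta$ holds for $t_\theta$. A directed set covered by two subsets has one of them cofinal; say $\Theta_L$ is cofinal (the case of $\Theta_R$ being symmetric). Given any $\rho\in\Theta$, pick $\theta\in\Theta_L$ above $\rho$ and apply $p_{\rho\theta}^I$ to $\pi_\theta(u)\,t_\theta=\pi_\theta(x)$ and $\pi_\theta(v)=t_\theta\,\pi_\theta(y)$; using $p_{\rho\theta}^I\circ\pi_\theta=\pi_\rho$ and $p_{\rho\theta}^I(t_\theta)=t_\rho$ this gives $\pi_\rho(u)\,t_\rho=\pi_\rho(x)$ and $\pi_\rho(v)=t_\rho\,\pi_\rho(y)$. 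As this holds for every $\rho$ and the projections $(\pi_\rho)_\rho$ jointly separate the points of $(\Om AV)^I$, we conclude that $(u,t)\in D((\Om AV)^I)$, $ut=x$ and $v=ty$, so $\Om AV$ is equidivisible.

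The one genuinely delicate point is the handling of the ``side'': a priori $t_\theta$ might witness the first alternative for some $\theta$ and the second for others, and this is what the cofinality observation — together with the coherence of the family $(t_\theta)$ under the connecting morphisms — is used to resolve. Everything else (compactness of $W_\theta$, non-emptiness of an inverse limit of nonempty compact Hausdorff spaces over a directed set, and the identification $\varprojlim(\Om{A/\theta}V)^I=(\Om AV)^I$) is routine.
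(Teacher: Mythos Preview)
Your proof is correct and follows essentially the same compactness-and-cofinality argument as the paper, which packages the inverse-limit step as a standalone lemma (Lemma~\ref{p:inverse-limit-of-equidivisible-semigroupoids}) stating that a quotient inverse limit of equidivisible profinite semigroupoids is equidivisible, and then applies it to the description of Corollary~\ref{c:inverse-limit-finite-retracts}. The only organizational difference is that the paper splits into the two ``sides'' \emph{before} running the compactness argument (lifting witnesses from $S_i^I$ to $S^I$ via surjectivity of the projections and then intersecting the closed sets $T_i\subseteq S^I$), whereas you take the inverse limit of the combined witness sets $W_\theta$ first and split afterward using coherence of the chosen family; both orderings work and the underlying idea is the same.
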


To establish this proposition
we use the following lemma, whose proof relies on a routine compactness
argument, given here for the reader's convenience.

\begin{lemma}
  \label{p:inverse-limit-of-equidivisible-semigroupoids}
  If the profinite semigroupoid $S$
  is a quotient inverse limit of equidivisible profinite semigroupoids, then $S$ is equidivisible.
\end{lemma}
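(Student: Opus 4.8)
The plan is to take an inverse system $S=\varprojlim_{i\in I} S_i$ with all $S_i$ equidivisible profinite semigroupoids and all connecting morphisms quotients, and to verify the equidivisibility condition for $S$ directly. Suppose $u,v,x,y\in S$ satisfy $uv=xy$; write $u_i,v_i,x_i,y_i$ for the images of $u,v,x,y$ under the projection $\pi_i\from S\to S_i$. For each $i$ we then have $u_iv_i=x_iy_i$ in the equidivisible semigroupoid $S_i$, so there is a $t_i\in S_i^I$ witnessing equidivisibility: either $u_it_i=x_i$ and $v_i=t_iy_i$, or $x_it_i=u_i$ and $y_i=t_iv_i$. The obvious first worry is that the chosen $t_i$ need not be compatible with the connecting morphisms, so they do not assemble into an element of $S^I$; the remedy is a compactness argument.

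First I would split the index set according to which of the two alternatives holds. For each $i$ let $P_i$ be the set of all $t\in S_i^I$ with $u_it=x_i$ and $v_i=ty_i$, and $Q_i$ the set of all $t\in S_i^I$ with $x_it=u_i$ and $y_i=tv_i$; each $S_i$ is finite (so $S_i^I$ is finite), and $P_i\cup Q_i\neq\emptyset$. At least one of the two properties ``$P_i\neq\emptyset$ for cofinally many $i$'' and ``$Q_i\neq\emptyset$ for cofinally many $i$'' must hold; by symmetry assume the former, and by passing to a cofinal subsystem assume $P_i\neq\emptyset$ for all $i$. Next I would observe that each $P_i$ is a \emph{nonempty finite} subset of the finite set $S_i^I$, and that the connecting morphisms $\varphi^I_{ij}\from S_i^I\to S_j^I$ (for $i\ge j$) map $P_i$ into $P_j$: indeed, applying $\varphi_{ij}$ to the equalities $u_it=x_i$, $v_i=ty_i$ and using that $\varphi_{ij}$ carries $u_i,v_i,x_i,y_i$ to $u_j,v_j,x_j,y_j$ shows $\varphi^I_{ij}(t)\in P_j$. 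Hence $(P_i)_{i\in I}$ is an inverse system of nonempty finite sets with the connecting maps, so by the standard fact that an inverse limit of nonempty finite (or compact) sets along a directed system is nonempty, $\varprojlim_i P_i\neq\emptyset$; pick $t=(t_i)_i$ in it.

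It remains to check that this $t$ works in $S$. Since $t\in\varprojlim_i S_i^I$, and $\varprojlim_i S_i^I=(\varprojlim_i S_i)^I=S^I$ whenever $V(S)$ is finite (which holds because each $S_i$, being in $\pv{Sd}$, is finite and the $V(S_i)\to V(S_j)$ are bijections, so $V(S)$ is finite), we have $t\in S^I$. The equalities $u_it_i=x_i$ and $v_i=t_iy_i$ hold for every $i$, and since the projections $\pi_i$ are jointly injective on $S$ and are homomorphisms, this forces $ut=x$ and $v=ty$ in $S^I$. (One must also check composability, i.e.\ that $ut$ and $ty$ are defined, but this is automatic: $\init,\term$ commute with the $\pi_i$, and composability holds coordinatewise.) In the symmetric case one gets $xt=u$ and $y=tv$ instead. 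Either way $S$ is equidivisible.

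The main obstacle, as indicated, is purely organizational: ensuring that a coherent choice of the witness $t$ exists across the whole inverse system. The key devices are (i) the dichotomy between the two alternatives must be resolved \emph{uniformly}, achieved by a cofinality/pigeonhole argument before the compactness step, and (ii) the non-emptiness of an inverse limit of nonempty finite sets. A minor point to handle carefully is the identification $\varprojlim_i S_i^I = (\varprojlim_i S_i)^I$, which uses that the graphs are finite-vertex and that the connecting morphisms restrict to bijections on vertices, so adjoining local identities commutes with the inverse limit.
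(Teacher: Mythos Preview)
Your overall strategy is sound, but there is a genuine error: you repeatedly assert that the $S_i$ are finite (``each $S_i$ is finite (so $S_i^I$ is finite)'', ``each $S_i$, being in $\pv{Sd}$, is finite''). The hypothesis is only that the $S_i$ are \emph{profinite} semigroupoids, i.e.\ pro-$\pv{Sd}$, not members of $\pv{Sd}$; they are typically infinite (for instance, in the paper's intended application the $S_i$ are semigroupoids of the form $\Om{A_i}V$ with $A_i$ finite, which are almost never finite). Your argument is easily repaired: the sets $P_i$ are closed in the compact space $S_i^I$ (being cut out by equations involving the continuous multiplication), hence compact, and the connecting maps $\varphi_{ij}^I$ are continuous, so the standard fact that an inverse limit of nonempty compact Hausdorff spaces is nonempty applies---you even flag this parenthetically with ``(or compact)''. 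Likewise, your justification that $V(S)$ is finite is based on the same wrong premise; what you actually need, namely the identification $\varprojlim_i S_i^I \cong S^I$, follows from the fact that the extended connecting maps $\varphi_{ij}^I$ send edges to edges and local identities to local identities, together with $V(S)=\varprojlim_i V(S_i)$.

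Once this is fixed, your argument is correct and takes a slightly different route from the paper's. The paper never works inside $S_i^I$; instead, after the same cofinality dichotomy, it uses surjectivity of the quotient projections $\pi_i$ to lift a witness in $S_i^I$ to some $\tau\in S^I$, defines $T_i=\{\tau\in S^I:\pi_i(u\tau)=\pi_i(x),\ \pi_i(v)=\pi_i(\tau y)\}$, observes these form a decreasing family of nonempty closed subsets of the single compact space $S^I$, and concludes by the finite intersection property. Your version, building an inverse system of the witness sets $P_i\subseteq S_i^I$ and taking its limit, is equivalent in spirit; it avoids explicitly invoking surjectivity of $\pi_i$ at the cost of having to justify $\varprojlim_i S_i^I\cong S^I$ at the end.
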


\begin{proof}
  Take $S=\varprojlim_{i\in I}S_i$
  with underlying quotient inverse system
  \begin{equation*}
   \{\pi_{j,i}\from S_j\to S_i\mid i\leq j;\,i,j\in I\} 
  \end{equation*}
  of continuous semigroupoid homomorphisms
  such that $S_i$ is an equidivisible profinite semigroupoid for every $i\in I$.
  We denote by $\pi_i$ the induced projection $S\to S_i$.

  Let $u,v,x,y\in E(S)$ be such that $uv=xy$. As $S_i$ is equidivisible for every $i\in I$, the set $I$ is the union of the following two sets:
    \begin{align*}
    I_1&=\{i\in I\mid \exists t\in S_i^I(\alpha(\pi_i(x)),\alpha(\pi_i(u))):\pi_i(u)t=\pi_i(x)\text{ and }\pi_i(v)=t\pi_i(y)\},\\
   I_2&=\{i\in I\mid \exists t\in S_i^I(\alpha(\pi_i(y)),\alpha(\pi_i(v))):\pi_i(u)=t\pi_i(x)\text{ and }\pi_i(v)t=\pi_i(y)\}.
  \end{align*}
  At least one these sets is cofinal in $I$. Hence, without loss of generality, we may assume that $I=I_1$ up to taking a subnet.
  Then, for every $i\in I$, since $\pi_i$ is a quotient homomorphism (cf.~\cite[Section 3.2]{Engelking:1989}), the following set is nonempty:
  \begin{equation*}
    T_i=\{\tau\in S^I(\alpha(x),\alpha(u)):\pi_i(u\tau)=\pi_i(x)\text{ and }\pi_i(v)=\pi_i(\tau y)\}.
  \end{equation*}
  Let $T=\bigcap_{i\in I}T_i$.
  Note that $\tau\in T$ if and only if $u\tau=x$ and $v=\tau y$.
  Therefore, to establish the lemma it suffices to show that $T$ is nonempty.
  
  For every $i,j\in I$ such that $i\leq j$,
  it follows from the equality $\pi_{j,i}\circ\pi_j=\pi_i$
  that $T_j\subseteq T_i$.
  Let $F$ be a finite subset of $I$. Since $I$ is directed, there is $k\in I$
  such that $i\leq k$ for every $i\in F$. Hence $T_k$
  is a nonempty subset of  $\bigcap_{i\in F}T_i$. Since $(T_i)_{i\in I}$
  is a family of closed subsets of the compact space $S^I$, we deduce
  that the intersection $\bigcap_{i\in I}T_i$ is nonempty. As remarked, this shows that $S$ is equidivisible.
\end{proof}

 \begin{proof}[{Proof of Proposition~\ref{p:equidivisible-pseudovarieties-of-semigroupoids}}]
   By Corollary~\ref{c:inverse-limit-finite-retracts},
   the profinite semigroupoid $\Om AV$ is a quotient inverse limit $\varprojlim\Om {A_i}V$ such that $A_i$
   is finite for every $i\in I$. Since, by hypothesis, $\Om {A_i}V$ is equidivisible for every $i\in I$,
   it follows from Lemma~\ref{p:inverse-limit-of-equidivisible-semigroupoids}
   that $\Om AV$ is equidivisible.
 \end{proof}

\begin{remark}
  \label{r:equidivisible-V-vs-V-cap-S}
  Note that if the pseudovariety of semigroups $\pv V$ is such that $\pv{\glob V}$ is
  equidivisible, then $\pv{V}$ is also equidivisible in view of the equality $\Om A{\glob
    V}=\Om AV$ for every alphabet $A$ (see Remark~\ref{r:om-global-V-alphabet}).
\end{remark}

 The following proposition is a simple application of
 Proposition~\ref{p:faithful-open-case-B2}.

\begin{proposition}
  \label{p:equidivible-V-versus-global-V}
  Let $\pv V$ be a pseudovariety of semigroups containing $B_2$. Then $\pv V$ is  equidivisible
  if and only if $\pv{\glob V}$ is equidivisible.
\end{proposition}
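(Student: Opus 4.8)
The plan is to prove the two implications separately, with only the ``only if'' direction using $B_2\in\pv V$. For the ``if'' direction, if $\pv{\glob V}$ is equidivisible then, since $\Om AV=\Om A{\glob V}$ for every alphabet $A$ (Remark~\ref{r:om-global-V-alphabet}), the semigroup $\Om AV$ is equidivisible for every alphabet $A$, so $\pv V$ is equidivisible; this is precisely Remark~\ref{r:equidivisible-V-vs-V-cap-S}, and needs nothing about $B_2$.

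For the ``only if'' direction, suppose $\pv V$ is equidivisible. By the definition of an equidivisible pseudovariety of semigroupoids (and, for the stronger finite-vertex statement, Proposition~\ref{p:equidivisible-pseudovarieties-of-semigroupoids}), it suffices to show that $\Om A{\glob V}$ is equidivisible for every finite graph~$A$. Fix such an $A$ and take edges $u,v,x,y$ of $\Om A{\glob V}$ with $uv=xy$. The strategy is to push the equation forward along the faithful homomorphism $\gamma=\gamma_A^{\pv V}\from\Om A{\glob V}\to\Om {E(A)}V$, solve it inside the equidivisible semigroup $\Om {E(A)}V$ (equidivisibility of $\pv V$ applies since $E(A)$ is a finite alphabet), and pull the solution back using the strong properties of $\gamma$ granted by $B_2\in\pv V$ in Proposition~\ref{p:faithful-open-case-B2}. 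Applying $\gamma$ gives $\gamma(u)\gamma(v)=\gamma(x)\gamma(y)$ in $\Om {E(A)}V$, so there is $s\in\bigl(\Om {E(A)}V\bigr)^I$ with, say, $\gamma(u)s=\gamma(x)$ and $\gamma(v)=s\gamma(y)$, the other alternative being treated symmetrically.

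It remains to lift $s$. If $s$ is the adjoined identity, then $\gamma(u)=\gamma(x)$ and $\gamma(v)=\gamma(y)$; since $\gamma$ restricts to an injective map on edges (Proposition~\ref{p:faithful-open-case-B2}), this forces $u=x$ and $v=y$, and one checks, using that $\init(x)=\term(y)$ because $(x,y)$ is composable, that $t=1_{\init(u)}$ witnesses the first alternative in the definition of equidivisibility. If instead $s\in\Om {E(A)}V$, then $\gamma(x)=\gamma(u)\cdot s$ lies in the factorial subset $\gamma(\Om A{\glob V})$ of $\Om {E(A)}V$ (Proposition~\ref{p:faithful-open-case-B2}\ref{item:faithful-open-case-B2-1}), so $s=\gamma(t)$ for a necessarily unique edge $t$ of $\Om A{\glob V}$; then $\gamma(u)\gamma(t)=\gamma(x)$ and $\gamma(t)\gamma(y)=\gamma(v)$, and Proposition~\ref{p:faithful-open-case-B2}\ref{item:faithful-open-case-B2-3} yields $(u,t),(t,y)\in D(\Om A{\glob V})$ together with $ut=x$ and $ty=v$, again the first alternative. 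Hence $\Om A{\glob V}$ is equidivisible, as required.

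I expect the only delicate point to be the bookkeeping around the category composition convention and the adjoined local identities: one must verify that the composability conditions underlying $ut=x$ and $v=ty$ genuinely hold, and dispatch the case where equidivisibility of $\Om {E(A)}V$ produces the formal identity rather than an honest pseudoword. Everything else is a direct invocation of the properties of $\gamma$ already recorded in Proposition~\ref{p:faithful-open-case-B2}.
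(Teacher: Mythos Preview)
Your proposal is correct and follows the same route as the paper: both directions use exactly the tools you name (Remark~\ref{r:equidivisible-V-vs-V-cap-S} for ``if'', and the factoriality and lifting properties of $\gamma$ from Proposition~\ref{p:faithful-open-case-B2} for ``only if''). Your version is in fact slightly more careful than the paper's, which writes $t\in\Om{E(A)}V$ rather than $t\in(\Om{E(A)}V)^I$ and so does not explicitly dispatch the adjoined-identity case that you handle separately.
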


\begin{proof}
  The ``if'' part of the proposition is given by
  Remark~\ref{r:equidivisible-V-vs-V-cap-S}.
  
  Conversely, suppose that $\pv V$ is equidivisible.  Let $u,v,x,y$ be pseudopaths of $\Om A{\glob V}$ such that $uv=xy$.
  Since $\gamma(uv)=\gamma(xy)$ and $\Om {E(A)}V$ is equidivisible,
  there is $t\in \Om {E(A)}V$ such that at least one of the following conjunctions holds:
  $\gamma(u)\cdot t=\gamma(x)$ and $\gamma(v)=t\cdot \gamma(y)$;
  or $\gamma(x)\cdot t=\gamma(u)$ and $\gamma(y)=t\cdot \gamma(v)$.
  Without loss of generality, we assume the former. 
  Since we are assuming that $\pv V$ contains $B_2$, it follows from Proposition~\ref{p:faithful-open-case-B2} that
  the image of $\gamma$ is a factorial subset of $\Om {E(A)}{\glob V}$,
  and so we have $t=\gamma(s)$ for some pseudopath $s$.
  Moreover, also by Proposition~\ref{p:faithful-open-case-B2},
  the equalities $\gamma(x)\cdot \gamma(s)=\gamma(u)$
  and $\gamma(y)=\gamma(s)\cdot \gamma(v)$
  yield the factorizations $xs=u$ and $y=sv$ in $\Om AV$. Hence $\Om AV$ is indeed equidivisible for every every finite-vertex graph~$A$.
\end{proof}

At this point, it is natural to ask the following question, which we leave as an open problem.

\begin{problem}
  \label{prob:characterize-equidivisibible}
  If\/ $\pv V$ is an equidivisible pseudovariety of semigroups, which
  semigroupoid pseudovarieties in the interval $(\pv{\glob V},\ell\pv V]$
  are equidivisible?
\end{problem}

 We close this section by proving the semigroupoid counterpart of Corollary~\ref{c:concatenation-closed-implies-equidivisible}. This also establishes a connection between Problems~\ref{prob:characterize-equidivisibible} and~\ref{prob:characterize-concatenation-closed}. 
\begin{corollary}
  \label{c:concatenation-closed-implies-equidivisible-semigroupoids}
  Every concatenation-closed pseudovariety of semigroupoids is equidivisible.
\end{corollary}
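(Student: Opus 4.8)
The plan is to mimic, at the level of nets, the proof of Corollary~\ref{c:concatenation-closed-implies-equidivisible}, but replacing the Mal'cev-product characterization of equidivisibility (which is available for semigroups only) by the full factorizability of convergent nets supplied by Corollary~\ref{c:open-mult-free-profinite}. So let $\pv V$ be a concatenation-closed pseudovariety of semigroupoids; by Remark~\ref{r:concatenation-closed-gV-versus-V} we have $\pv N\subseteq\pv V$, and hence, for any finite-vertex graph $A$, Proposition~\ref{p:A+-embeds-isolated} lets us regard $A^+$ as a dense subsemigroupoid of $\Om AV$ all of whose edges are isolated points. We must show that $\Om AV$ is equidivisible; doing this for finite-vertex graphs is a priori more than the definition requires, but costs nothing extra.

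First I would record the routine observation that the free semigroupoid $A^+$ is itself equidivisible: given $pq=rs$ in $A^+$ with, say, $|p|\le|r|$, comparing the underlying sequences of edges exhibits a path $t$ of $(A^+)^I$ (an empty path if $|p|=|r|$) with $pt=r$ and $q=ts$. Now take $u,v,x,y\in E(\Om AV)$ with $uv=xy=:w$ and choose a net $(w_i)_{i\in I}$ in $A^+$ converging to $w$. Applying Corollary~\ref{c:open-mult-free-profinite} to this net and the factorization $w=uv$ yields a subnet $(w_{i_j})_{j\in J}$ with $w_{i_j}=u_jv_j$, where $(u_j,v_j)_{j\in J}$ is a net in $D(\Om AV)$ converging to $(u,v)$; since $w_{i_j}\in A^+$ and $A^+$ is factorial in $\Om AV$ by Corollary~\ref{c:A+-as-filter}, we get $u_j,v_j\in A^+$. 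The key move is then to apply Corollary~\ref{c:open-mult-free-profinite} a \emph{second} time, but to the subnet $(w_{i_j})_{j\in J}$, which still converges to $w=xy$, and the factorization $w=xy$: this produces a further subnet $(w_{i_{j_k}})_{k\in K}$ with $w_{i_{j_k}}=x_ky_k$, $(x_k,y_k)\to(x,y)$, and again $x_k,y_k\in A^+$. Along $K$ we now have the single identity $u_{j_k}v_{j_k}=x_ky_k$ inside the equidivisible semigroupoid $A^+$, so there is $t_k\in(A^+)^I$ with either $u_{j_k}t_k=x_k,\ v_{j_k}=t_ky_k$ or $x_kt_k=u_{j_k},\ y_k=t_kv_{j_k}$; one of the two alternatives holds on a cofinal subset of $K$, so, passing to a subnet, we may assume without loss of generality that the first always holds. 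Finally, $(t_k)$ lives in the compact semigroupoid $(\Om AV)^I$ — compact precisely because $V(A)$ is finite — so passing to a convergent subnet $t_k\to t\in(\Om AV)^I$ and using continuity of the multiplication of $(\Om AV)^I$ together with the fact that the composability relation is closed (the adjacency maps being continuous into the finite discrete $V(A)$), the limit along this last subnet gives $ut=x$ and $v=ty$ with $t\in(\Om AV)^I$. Hence $\Om AV$ is equidivisible, and as $A$ was an arbitrary finite-vertex graph, $\pv V$ is equidivisible.

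The part I expect to require the most care is the bookkeeping around the two successive applications of full factorizability: the second must be performed on the subnet already extracted in the first step, so that the two path-factorizations $w_{i_{j_k}}=u_{j_k}v_{j_k}$ and $w_{i_{j_k}}=x_ky_k$ sit over a common index and can be confronted inside $A^+$. Everything else is a standard compactness argument, with the finiteness of $V(A)$ ensuring both that $(\Om AV)^I$ is a compact topological semigroupoid and that composability passes to limits. Alternatively, one could invoke Proposition~\ref{p:equidivisible-pseudovarieties-of-semigroupoids} to restrict attention to finite graphs from the outset, but this does not shorten the core net argument.
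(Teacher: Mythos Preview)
Your proof is correct and follows essentially the same route as the paper: the paper isolates the net argument as a general lemma (Lemma~\ref{l:equidivisible-from-subsemigroupoid-to-semigroupoid}) stating that a compact semigroupoid with open multiplication is equidivisible whenever it has a dense factorial equidivisible subsemigroupoid, and then applies it with $T=A^+$. The only cosmetic difference is that the paper starts directly with a net $(u_i,v_i)$ in $T^2$ converging to $(u,v)$ and invokes full factorizability once to produce $(x_j,y_j)$, whereas you start with a net converging to $w$ and invoke it twice; both executions are equally valid.
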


In the proof of this corollary, we use the following lemma.
 
\begin{lemma}
  \label{l:equidivisible-from-subsemigroupoid-to-semigroupoid}
  Let $S$ be a compact semigroupoid with open multiplication, and let~$T$ be a subsemigroupoid of $S$
  such that $T$ is factorial and dense in $S$. Then $S$ is equidivisible if and only if $T$ is equidivisible.
\end{lemma}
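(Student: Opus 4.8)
The plan is to prove the two implications separately. Only the implication ``$T$ equidivisible $\Rightarrow$ $S$ equidivisible'' will use the density of $T$ and the openness of the multiplication of $S$; the reverse implication relies solely on $T$ being a factorial subsemigroupoid.

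Assume first that $S$ is equidivisible, and let $u,v,x,y\in E(T)$ satisfy $uv=xy$. Since the multiplication of $T$ is the restriction of that of $S$, this equality also holds in $S$, so equidivisibility of $S$ furnishes some $t\in S^I$ with, say, $ut=x$ and $v=ty$ (the other alternative is treated symmetrically). If $t$ is one of the adjoined local identities, say $t=1_p$, then necessarily $p=\init(u)\in V(T)$, so $1_p\in T^I$ and it witnesses the required factorization in $T$. Otherwise $t\in E(S)$; since $x=ut\in E(T)$ and $T$ is factorial in $S$, we get $u,t\in E(T)$, hence $t\in E(T^I)$, and the equalities $ut=x$, $v=ty$ then hold in $T$. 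Thus $T$ is equidivisible.

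For the converse, assume $T$ is equidivisible. By Corollary~\ref{c:open-mult-fully-factorizable}, since $S$ has open multiplication, every convergent net in $S$ is fully factorizable. Let $u,v,x,y\in E(S)$ with $w:=uv=xy$, and choose, using density of $T$, a net $(w_i)_{i\in I}$ in $E(T)$ converging to $w$. Applying full factorizability to the factorization $w=uv$ gives a subnet $(w_{i_j})_{j\in J}$ with $w_{i_j}=u_jv_j$ and $(u_j,v_j)\to(u,v)$ in $D(S)$; as $w_{i_j}\in E(T)$ and $T$ is factorial, $u_j,v_j\in E(T)$. This subnet is again a convergent net in $S$ with limit $w=xy$, so a second application of full factorizability, to the factorization $w=xy$, yields a further subnet $(w_{i_{j_k}})_{k\in K}$ with $w_{i_{j_k}}=x_ky_k$, $(x_k,y_k)\to(x,y)$ and $x_k,y_k\in E(T)$. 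For each $k$ we then have $u_{j_k}v_{j_k}=w_{i_{j_k}}=x_ky_k$ in $T$, so equidivisibility of $T$ provides $t_k\in T^I$ satisfying one of the two alternative pairs of equalities; passing to a cofinal subset of $K$ we may assume the same alternative, say $u_{j_k}t_k=x_k$ and $v_{j_k}=t_ky_k$, holds for every $k$. Finally, $(t_k)$ is a net in $S^I$, which is compact, so along a subnet $t_k\to t$ for some $t\in S^I$ while the nets $(u_{j_k})$, $(v_{j_k})$, $(x_k)$, $(y_k)$ still converge to $u,v,x,y$; continuity of the multiplication of $S^I$ then gives $ut=x$ and $v=ty$, witnessing equidivisibility of $S$.

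The main obstacle is the bookkeeping in the converse direction: full factorizability must be invoked twice, once for each of the two factorizations of the common value $w$, with the second subnet nested inside the first, and then a convergent subnet of the witnesses $(t_k)$ must be extracted in $S^I$ --- note that these witnesses need not lie in $T^I$ in the limit, nor need $T^I$ be compact, which is precisely why one works in $S^I$. A routine point to verify along the way is the case distinction in the forward direction according to whether the equidivisibility witness in $S$ is an adjoined local identity or a genuine edge of $S$.
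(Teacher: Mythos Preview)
Your proof is correct and follows essentially the same strategy as the paper: use full factorizability (Corollary~\ref{c:open-mult-fully-factorizable}) to lift the two factorizations of $uv=xy$ to factorizations in $T$, apply equidivisibility of $T$ there, split according to which alternative occurs cofinally, and pass to a cluster point in the compact $S^I$.

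The only noticeable difference is in the bookkeeping for the converse. The paper starts directly with a net $(u_i,v_i)$ of composable pairs from $T$ converging to $(u,v)$ and then invokes full factorizability \emph{once} to obtain the approximating factorization $x_jy_j$; you instead start with a net $(w_i)$ in $T$ converging to $w$ and apply full factorizability \emph{twice}, once for each of the two given factorizations. Your route is slightly longer but has the mild advantage that you never need the approximating pairs to be composable a priori---this comes for free from full factorizability---whereas the paper tacitly uses that one can choose the net $(u_i,v_i)$ inside $D(T)$. Both arguments are valid and yield the same conclusion.
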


\begin{proof}
  It is clear that every factorial subsemigroupoid of an equidivisible semigroupoid is itself equidivisible.

  Let $S$ and $T$ be as in the statement, with $T$ equidivisible. Let
  $u,v,x,y$ be edges of $S$ such that $uv=xy$. Since $T$ is dense in
  $S$, there is a net $(u_i,v_i)_{i\in I}$ of elements of $T^2$
  converging in $S^2$ to $(u,v)$. As $S$ is fully factorizable
  (Corollary~\ref{c:open-mult-fully-factorizable}), there is a subnet
  $(u_{i_j},v_{i_j})_{j\in J}$ of $(u_i,v_i)_{i\in I}$ and a net $(x_j,y_j)_{j\in J}$ converging to $(x,y)$, such
  that the equality $u_{i_j}v_{i_j}=x_jy_j$ holds for
  every $j\in J$. Since $T$ is factorial, both $x_j$ and $y_j$ belong
  to $T$, for every $j\in J$. As $T$ is moreover equidivisible, the
  set $J$ is the union of the following two sets:
  \begin{align*}
    J_1&=\{j\in J\mid \exists t\in T:u_{i_j}t=x_j\text{ and }v_{i_j}=ty_j\},\\
   J_2&=\{j\in J\mid \exists t\in T:u_{i_j}=x_jt\text{ and }tv_{i_j}=y_j\}.
  \end{align*}
  At least one these sets is cofinal in $J$. Hence, without loss of generality, we may assume that $J=J_1$ up to taking a subnet. Then, for each $j\in J$,
  choose $t_j\in T$ such that $u_{i_j}t_j=x_j$ and $v_{i_j}=t_jy_j$. Let $t$ be a cluster point of the net
  $(t_j)_{j\in J}$. By continuity of the multiplication, we obtain $ut=x$ and $v=ty$. This shows that $S$ is equidivisible.
\end{proof}

\begin{proof}[Proof of Corollary~\ref{c:concatenation-closed-implies-equidivisible-semigroupoids}]
  Let $\pv V$ be a concatenation-closed pseudovariety of semigroupoids.
  Let $A$ be a finite-vertex graph. We know by Theorem~\ref{t:sgpd-concatenation-closed-pseudovarieties} that $\pv V$ contains $\pv N$
  and that the multiplication in $\Om AV$ is an open mapping.
  Recall that $A^+$ is dense in $\Om AV$ and (by Corollary~\ref{c:A+-as-filter}) factorial in $\Om AV$. Hence,
  as $A^+$ is an equidivisible semigroupoid, the result now follows immediately from Lemma~\ref{l:equidivisible-from-subsemigroupoid-to-semigroupoid}.
\end{proof}

\section{Prefix accessible pseudowords and pseudopaths}
\label{sec:recurrence}

In this section, we establish some properties of \emph{prefix accessible pseudopaths} (Definition~\ref{def:prefix-accessible-pseudopaths}), occasionally with the help of fully factorizable nets, via some of the main results established in Section~\ref{sec:open-multiplication}. In the preliminaries leading to that, we extend some known properties
about finite-length prefixes of pseudowords over finite alphabets to
pseudowords over arbitrary alphabets, and, more generally,
to pseudopaths over arbitrary finite-vertex pseudopaths.

We start by establishing conditions for the existence and uniqueness
of prefixes of any given finite length. In the following statement, we
refer to the pseudovariety~$\pv{K}_n$ of all finite semigroups in
which all products of $n$ factors are left zeroes.

\begin{proposition}
  \label{p:length-n-prefix}
  Let $A$ be a finite-vertex graph
  and fix an integer $n\geq 1$. Let $\pv{V}$ be a pseudovariety of
  semigroupoids containing $\pv{K}_n$ and $\pv N$. If $w\in\Om AV$ is a
  pseudopath of length at least $n$, possibly infinite, then there exists a unique
  pseudopath $u\in\Om AV$ of length $n$ and such that $u$ is a prefix of
  $w$.
\end{proposition}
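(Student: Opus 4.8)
The plan is to prove existence and uniqueness separately. Existence uses only $\pv{N}\subseteq\pv{V}$; uniqueness uses $\pv{K}_n$, and I would settle it first for finite graphs and then bootstrap to arbitrary finite-vertex graphs via Corollary~\ref{c:inverse-limit-finite-retracts}.

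\textit{Existence.} Let $w\in\Om AV$ with $\ell_A(w)\ge n$. Since $A^+$ is dense in $\Om AV$, I would pick a net $(w_i)_{i\in I}$ of paths converging to $w$; as $\ell_A$ is continuous and $\{k:k\ge n\}$ is clopen in $\nn_+\cup\{\infty\}$, after passing to a tail we may assume $|w_i|\ge n$ for all $i$. Factor each path as $w_i=u_it_i$, where $u_i$ is its length-$n$ prefix and $t_i\in(A^+)^I$ is the complementary suffix, so that $(u_i,t_i)\in D\bigl((\Om AV)^I\bigr)$. The net $(u_i,t_i)$ lives in the compact space $\ell_A^{-1}(n)\times(\Om AV)^I$ (here $\ell_A^{-1}(n)$ is clopen, hence compact, and $(\Om AV)^I$ is compact because $V(A)$ is finite), so it has a cluster point $(u,t)$, which lies in the closed set $D\bigl((\Om AV)^I\bigr)$. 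Along a corresponding subnet, continuity of the multiplication of $(\Om AV)^I$ gives $w_{i_j}=u_{i_j}t_{i_j}\to ut$, while $w_{i_j}\to w$; hence $w=ut$ with $\ell_A(u)=n$, so $u$ is a length-$n$ prefix of $w$.

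\textit{Uniqueness.} Suppose $u$ and $u'$ are length-$n$ prefixes of $w$; write $w=uz=u'z'$ with $z,z'\in(\Om AV)^I$. Assume first that $A$ is a finite graph, so that $u$ and $u'$ are paths of length $n$ over $A$ by Proposition~\ref{p:finite-length-pseudopaths-in-finite-graph-case}. Let $F$ be the free $\pv{K}_n$-semigroup over the finite alphabet $E(A)$; it is a finite semigroup (its elements are the nonempty words over $E(A)$ of length at most $n$, distinct words being distinct) and lies in $\pv{K}_n\subseteq\pv V$. Hence the graph morphism $A\to E(A)\hookrightarrow F$ that collapses all vertices extends to a continuous homomorphism $p\from\Om AV\to F$. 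For a path $v=a_1\cdots a_n$ of length $n$, the element $p(v)=a_1\cdots a_n$ is a product of $n$ factors, hence a left zero of $F$ (since $F\in\pv{K}_n$). Applying $p^I$ to $w=uz$ gives $p(w)=p(u)\,p^I(z)=p(u)$, and likewise $p(w)=p(u')$; as $p$ restricts to an injective map on the paths of length $n$ (they go to the corresponding distinct words of length $n$ in $F$), we conclude $u=u'$. For a general finite-vertex graph $A$, it suffices by Corollary~\ref{c:separation-of-points} to prove $\pro{q}_\theta(u)=\pro{q}_\theta(u')$ for every finite-index graph equivalence $\theta$ of $A$. Since $\ell_{A/\theta}\circ\pro{q}_\theta=\ell_A$ (both send the generators to $1$), the pseudopaths $\pro{q}_\theta(u)$ and $\pro{q}_\theta(u')$ have length $n$; and applying $\pro{q}_\theta$ to $w=uz=u'z'$ shows they are prefixes of $\pro{q}_\theta(w)$ in $\Om{A/\theta}V$. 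As $A/\theta$ is a finite graph, the previous case yields $\pro{q}_\theta(u)=\pro{q}_\theta(u')$.

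The hard part will be the uniqueness in the finite-graph case. It would be tempting to invoke the ``left zero'' property of $\pv{K}_n$ directly inside $\Om AV$, but this fails: $\Om AV$ need not satisfy any $\pv{K}_n$-identity (we only assume the inclusion $\pv{K}_n\subseteq\pv V$, not the reverse), and even in a genuine $\pv{K}_n$-semigroupoid the left-zero condition is constrained by the incidence of edges and is too weak to pin down the prefix. The remedy, as above, is to push $w$ forward to the free $\pv{K}_n$-\emph{semigroup} over $E(A)$, where there are no vertices to worry about and a length-$n$ word really is a left zero, so that the edge-word of the length-$n$ prefix of $w$ — and hence the prefix itself — can be recovered from $p(w)$.
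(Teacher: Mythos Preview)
Your proof is correct and follows essentially the same strategy as the paper's: existence via approximation by paths and compactness, uniqueness by reducing to finite graphs through Corollary~\ref{c:separation-of-points} and then projecting to the free $\pv{K}_n$-semigroup over the edge alphabet, where length-$n$ words are left zeros and distinct. The only cosmetic difference is that the paper merges your two uniqueness steps into one pass (given $u\neq s$, it picks a separating $\varphi\from\Om AV\to\Om BV$ with $B$ finite and then applies the $\pv{K}_n$-projection $\pi\from\Om BV\to\Om{E(B)}{K_n}$), whereas you first isolate the finite-graph case and then bootstrap; the underlying argument is the same.
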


\begin{proof}
  Let $(w_i)_{i\in I}$ be a net of paths in $A^+$ converging to $w$ in
  $\Om AV$. By continuity of the length homomorphism $\ell_A\from\Om
  AV\to\nn\cup\{\infty\}$, we may assume that $|w_i|\geq n$ for all
  $i\in I$. Let $u_i, v_i\in A^*$ be such that $w_i = u_iv_i$ and
  $|u_i|=n$. By taking a subnet, we may furthermore assume that the
  net $(u_i,v_i)_{i\in I}$ converges in $\Om AV\times (\Om AV)^I$ to
  some pair $(u,v)$. Note that $w=uv$ by continuity of multiplication,
  and that $|u|=n$ by continuity of $\ell_A$. Therefore, $u$ is a
  prefix of length $n$ of $w$.

  To prove uniqueness, suppose there is another factorization $w = st$
  where $|s|=n$, $s\in\Om AV$ and $t\in(\Om AV)^I$. By
  Corollary~\ref{c:separation-of-points}, if $u\neq s$ then there exists
  a finite graph $B$ and a continuous quotient homomorphism $\varphi\from
  \Om AV\to\Om BV$ such that $\varphi(A)=B$ and
  $\varphi(u)\ne\varphi(s)$.  Since $\varphi(A)\subseteq B$
  we must have $\ell_B(\varphi(a))=\ell_A(a)=1$
  for every edge $a$ of $A$.
  Therefore we have $\ell_B\circ\varphi=\ell_A$; in other words, $\varphi$ preserves length.
  In particular, both pseudopaths $\varphi(u)$ and $\varphi(s)$ have length $n$.
  As $B$ is finite, $\varphi(u)$ and $\varphi(s)$ are in fact words in $B^+$, of length $n$, by Proposition~\ref{p:finite-length-pseudopaths-in-finite-graph-case}.

  It is well known
  that for every finite alphabet $C$, the finite semigroup $\Om CK_n$ may be viewed as the set of all
  nonempty words of length at most $n$ over the alphabet~$C$,
  multiplication consisting in concatenating the words and taking the prefix of
  length $n$ if the result is longer than~$n$
  \cite[Section~5.2]{Almeida:1994a}. It follows that the natural
  projection $\pi\from\Om BV\to\Om {E(B)}K_n$ is injective on
  paths over $B$ of length at most~$n$. This leads to a contradiction: $\varphi(u)$ and $\varphi(s)$ are distinct paths over $B$ with length~$n$, while the above description of the
  operation in~$\Om {E(B)}K_n$ entails the equalities
  $\pi(\varphi(u))
    =\pi(\varphi(uv))
    =\pi(\varphi(st))
    =\pi(\varphi(s))$.
  Thus, the equality $u=s$ must hold.
\end{proof}

For a set $X$, let $w=w_0w_1w_2\cdots$ be an element of $X^\nn$, with
$w_n\in X$ for all $n\in\nn$. Given an interval $I$ of~$\nn$, we may
denote by $wI$ the (possibly infinite) sequence $(w_i)_{i\in I}$.
A~\emph{right-infinite path} over a graph $A$ is an element $w$ of
$\edg(A)^\nn$ such that $w{[}0,n{)}$ is a path over $A$, for every
$n\in\nn$. Proposition~\ref{p:length-n-prefix} leads to the following
definition, where $\pv K$ is the semigroup pseudovariety
$\bigcup_{n\geq 1}\pv K_n$. 
It follows from standard results in semigroup theory~\cite[Proposition~3.7.1]{Almeida:1994a} that $\pv K$ consists of all finite semigroups whose idempotents are left zeros, and in particular $\pv N\subseteq \pv K \subseteq \pv A$.

\begin{definition}
  \label{d:prefix-suffix}
  Let $w$ be a pseudopath of $\Om AV$, where $\pv{V}$ is a pseudovariety of semigroupoids containing
  $\pv K$.
  \begin{itemize}
  \item If $|w|\geq n$, where $n\in\nn$, we call the unique pseudoword $u$ of
    length~$n$ such that $w\in u(\Om AV)^I$ the \emph{prefix of length
      $n$ of $w$}, cf.~Proposition~\ref{p:length-n-prefix}.
  \item If $|w|=\infty$, we denote by $\overrightarrow{w}$ the
    right-infinite path over the graph  $\clos V(A)$ whose finite-length
    prefixes are those of $w$, cf.~Remark~\ref{r:pseudo-letters}. (Note that if $A$ is finite then $\clos V(A) = A$.)
  \end{itemize}
\end{definition}

It goes without saying that there is a dual of
Definition~\ref{d:prefix-suffix} where prefixes are replaced by suffixes, and where $\pv K$ is replaced by
its ``dual'' pseudovariety, denoted $\pv D$, which
consists of all finite semigroups where idempotents are right
zeros. It is worth mentioning that $\pv{\loc \pv I}$ is the least
pseudovariety containing $\pv K$ and $\pv D$.

Let $\pv V$ be a pseudovariety of semigroupoids containing $\pv N$. We
say that $\pv V$ is \emph{left finitely cancelable} if the following
condition holds: for every finite-vertex graph $A$, if the pseudopaths
$x,y,u,v\in(\Om AV)^I$ are such that the equality $xu=yv$ holds in
$\Om AV$ and $|x|=|y|\in\nn$, then $x=y$ and $u=v$. We may then denote
$v$ by $x^{-1}w$, or by $w^{(k)}$ where $w=xu$ and $k=|x|$. The dual
notion of \emph{right finitely cancelability} is defined in an analogous way. Finally, say that $\pv V$ is \emph{finitely cancelable} if it is
both right and left finitely cancelable.
  
\begin{proposition}
  \label{p:letter-super-cancelativity-semigroupoids}
  Every equidivisible pseudovariety of semigroupoids containing $\pv N$ is finitely cancelable.
\end{proposition}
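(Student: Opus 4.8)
The plan is to establish left finite cancelability; right finite cancelability follows by the symmetric argument with suffixes in place of prefixes, so I would treat only the left case in detail and remark that the other is dual. Fix a finite-vertex graph $A$. By Proposition~\ref{p:equidivisible-pseudovarieties-of-semigroupoids}, the hypothesis that $\pv V$ is equidivisible gives that $\Om AV$ is equidivisible for this $A$, and since $\pv V\supseteq\pv N$ the length homomorphism $\ell_A\from\Om AV\to\nn_+\cup\{\infty\}$ is available; it will do essentially all the work, via the basic fact that a pseudopath of $(\Om AV)^I$ has length $0$ if and only if it is an empty pseudopath. So suppose $x,y,u,v\in(\Om AV)^I$ satisfy $xu=yv$ in $\Om AV$ with $|x|=|y|=k\in\nn$; we must show $x=y$ and $u=v$.

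First I would dispose of the cases involving empty pseudopaths. If $k=0$, then $x$ and $y$ are empty pseudopaths, say $x=1_p$ and $y=1_q$, so that $xu=u$ and $yv=v$ (composability forcing $p=\term(u)$ and $q=\term(v)$); hence $u=v$, and comparing ranges yields $p=q$, so $x=y$. If $k\geq 1$, then $x$ and $y$ are edges of $\Om AV$, and applying $\ell_A$ to $xu=yv$ gives $|u|=|v|$, so $u$ is empty if and only if $v$ is; when both are empty one gets $x=xu=yv=y$ directly, and then $u$ and $v$ are both the empty pseudopath at $\init(x)$, so $u=v$. This leaves the main case, where $x,y,u,v$ are all edges of $\Om AV$, so that equidivisibility is applicable.

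In that case I would apply equidivisibility to $xu=yv$: there is $t\in(\Om AV)^I$ with either $xt=y$ and $u=tv$, or $yt=x$ and $v=tu$; by symmetry, assume the former. Applying $\ell_A$ to $xt=y$ gives $k=|x|+|t|=k+|t|$, whence $|t|=0$, so $t$ is the empty pseudopath at the vertex $\init(x)=\term(t)$. Therefore $y=xt=x$, and tracking the adjacency maps through the composition $yv$ shows $\term(v)=\init(y)=\init(x)=\term(t)$, so that $u=tv=v$ as well. This settles left finite cancelability; the right case is entirely parallel, equidivisibility now producing a length-zero overlap $t$ between the length-$k$ suffixes $x$ and $y$.

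I do not expect a serious obstacle here: the sole mathematical input is the observation that equidivisibility produces an overlap $t$ whose length, read off by $\ell_A$, is forced to be $0$. The step most deserving of care is the one flagged above — verifying that a length-zero overlap is literally an empty pseudopath that acts as an identity on the correct factor (which needs the bookkeeping of sources and ranges), together with the preliminary reduction to the situation where none of $x,y,u,v$ is empty, so that equidivisibility in the sense defined for edges can legitimately be invoked.
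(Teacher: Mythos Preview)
Your proof is correct and follows the same route as the paper: apply equidivisibility to produce an overlap $t$, then use the length homomorphism to force $|t|=0$, whence $t$ is an empty pseudopath and $x=y$, $u=v$. The paper's argument is terser and does not separate out the cases where some of $x,y,u,v$ are local identities; your explicit treatment of these boundary cases is a welcome bit of extra care, since equidivisibility is stated only for genuine edges, but the underlying idea is identical.
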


\begin{proof}
  Let $\pv V$ be an equidivisible pseudovariety of semigroupoids containing $\pv N$ and let~$A$ be a finite-vertex graph.
  Take pseudopaths $x,y,u,v\in(\Om AV)^I$ such that $xu=yv$ and $|x|=|y|\in\nn$.
  By equidivisibilty, the equality $xu=yv$
  entails the existence of $t\in(\Om AV)^I$ such that at least one of the following conjunctions holds:
  $xt=y$ and $u=tv$; or $x=ty$ and $ut=v$.
  We then get $|x|+|t|=|y|$ or $|x|=|t|+|y|$. As we are assuming $|x|=|y|$,
  it follows that $t$ is an empty path, whence $x=y$ and $u=v$. This establishes that $\pv V$ is left finitely cancelable. With similar arguments,
  we deduce that $\pv V$ is right finitely cancelable.
\end{proof}

Replacing semigroupoids by semigroups and finite-vertex graphs by alphabets, we obtain corresponding definitions of left and right finitely cancelable pseudovariety of semigroups, and thus of
finitely cancelable pseudovariety of semigroups. The arguments used in Proposition~\ref{p:letter-super-cancelativity-semigroupoids} also establish the following proposition.

\begin{proposition}
  \label{p:letter-super-cancelativity-semigroups}
  Every equidivisible pseudovariety of semigroups containing $\pv N$ is finitely cancelable.
\end{proposition}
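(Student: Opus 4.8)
The plan is to mimic the semigroupoid argument of Proposition~\ref{p:letter-super-cancelativity-semigroupoids}, replacing semigroupoids by semigroups and finite-vertex graphs by alphabets, as the paper explicitly announces. First I would fix an equidivisible pseudovariety of semigroups $\pv V$ containing $\pv N$ and an alphabet $A$. The key point to check is that the relevant free object $\Om AV$ is still equidivisible even when $A$ is infinite. This is not literally the definition (which only requires $\Om AV$ equidivisible for \emph{finite} alphabets), but it follows by the same inverse-limit reasoning used for semigroupoids: by Corollary~\ref{c:inverse-limit-finite-retracts} (in its semigroup form, via Remark~\ref{r:om-global-V-alphabet}), $\Om AV$ is a quotient inverse limit of the semigroups $\Om {A/\theta}V$ over finite-index equivalences $\theta$, i.e.\ over finite alphabets; each of these is equidivisible by hypothesis; and then Lemma~\ref{p:inverse-limit-of-equidivisible-semigroupoids} gives that $\Om AV$ is equidivisible. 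Alternatively, one notes that the finite quotients of $A$ are finite sets, so this is exactly the semigroup specialization of Proposition~\ref{p:equidivisible-pseudovarieties-of-semigroupoids}.

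Once $\Om AV$ is known to be equidivisible, the rest is the verbatim transcription of the proof of Proposition~\ref{p:letter-super-cancelativity-semigroupoids}. Take pseudowords $x,y,u,v\in(\Om AV)^I$ with $xu=yv$ and $|x|=|y|\in\nn$. Equidivisibility of $\Om AV$ yields a $t\in(\Om AV)^I$ with either $xt=y$ and $u=tv$, or $x=ty$ and $ut=v$. Applying the length homomorphism $\ell_A\from\Om AV\to\nn_+\cup\{\infty\}$ (which exists since $\pv N\subseteq\pv V$), we obtain $|x|+|t|=|y|$ in the first case and $|x|=|t|+|y|$ in the second. Since $|x|=|y|$ is a (finite) natural number, in both cases $|t|=0$, so $t$ is an empty path; hence $x=y$ and $u=v$. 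This establishes left finite cancelability. The dual computation, swapping the roles of prefixes and suffixes, establishes right finite cancelability, and therefore $\pv V$ is finitely cancelable.

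I do not anticipate a serious obstacle: the only substantive point is the passage from ``equidivisible for finite alphabets'' to ``$\Om AV$ equidivisible for all alphabets'', and that is handled cleanly by the inverse-limit lemma already in hand (Lemma~\ref{p:inverse-limit-of-equidivisible-semigroupoids} together with Corollary~\ref{c:inverse-limit-finite-retracts}). One small care is needed when some of $x,y,u,v$ are empty paths of $(\Om AV)^I$: the equidivisibility hypothesis as stated is for edges of a semigroupoid, but adjoining local identities preserves equidivisibility, and in the semigroup case $(\Om AV)^I$ is just $\Om AV$ with an identity adjoined, for which equidivisibility is immediate from that of $\Om AV$ (the witness $t$ can be taken to be the identity in the trivial subcases). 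Since the argument is so short and parallel to the preceding proposition, I would simply write: ``The arguments used in the proof of Proposition~\ref{p:letter-super-cancelativity-semigroupoids}, together with the observation that $\Om AV$ is equidivisible for every alphabet $A$ (by Lemma~\ref{p:inverse-limit-of-equidivisible-semigroupoids} and the semigroup version of Corollary~\ref{c:inverse-limit-finite-retracts}), also establish this proposition.''
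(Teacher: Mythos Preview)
Your proposal is correct and matches the paper's approach: the paper gives no separate proof, merely stating that ``the arguments used in Proposition~\ref{p:letter-super-cancelativity-semigroupoids} also establish the following proposition,'' and you have correctly unpacked what those arguments are, including the one nontrivial point (extending equidivisibility of $\Om AV$ from finite to arbitrary alphabets). A minor simplification: since $\pv N\subseteq\pv V$ forces $\pv V\not\subseteq\pv{CS}$, you could invoke Theorem~\ref{t:equidivisible-pseudovarieties}\ref{item:equidivisible-pseudovarieties-2} directly to get equidivisibility of $\Om AV$ for every alphabet $A$, rather than rerunning the inverse-limit argument; but your route via Lemma~\ref{p:inverse-limit-of-equidivisible-semigroupoids} and Corollary~\ref{c:inverse-limit-finite-retracts} is equally valid and is indeed the semigroup specialization of the proof of Proposition~\ref{p:equidivisible-pseudovarieties-of-semigroupoids}.
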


\begin{remark}
  Finitely cancelable pseudovarieties of semigroups are defined in~\cite{Almeida&ACosta&Costa&Zeitoun:2019} almost as here, the essential difference being that only finite alphabets
  are considered there. It is an easy exercise (not necessary for the remaining of the paper) to show that
  the two definitions are equivalent for all pseudovarieties of semigroups containing $\pv N$, using  the isomorphism
  $\Om AV\to \varprojlim_{\theta\in\Theta}\Om {A/{\theta}}V$
  from Corollary~\ref{c:inverse-limit-finite-retracts}
  and the fact that the canonical projections $\Om AV\to \Om {A/{\theta}}V$ preserve length. 
  Hence we can say that Proposition~\ref{p:letter-super-cancelativity-semigroups} of this paper coincides
  with Proposition 6.4 from paper~\cite{Almeida&ACosta&Costa&Zeitoun:2019}, where only finite alphabets are considered.
  See also Proposition 6.2 from~\cite{Almeida&ACosta&Costa&Zeitoun:2019} for a complete characterization of right finitely cancelable pseudovarieties of semigroups.
\end{remark}

\begin{definition}[Prefix accessible pseudopaths]
  \label{def:prefix-accessible-pseudopaths}
  Consider a finite-vertex graph $A$ and a pseudovariety of semigroupoids $\pv V$ containing $\pv N$.
  Let  $\dex V(w)$ denote the set of all cluster points in $\Om AV$ of the sequence $(w[0,n])_{n\in\nn}$.
  An element of a set of the form $\dex V(w)$ for some right-infinite path $w$ is said to be a \emph{prefix accessible} pseudopath.    
\end{definition}

Note that a prefix accessible pseudopath has infinite length. In fact, if $w$ is a right-infinite path over the finite-vertex graph $A$, then the set $\dex{V}(w)$ is given by the equality
\begin{equation*}
  \dex{V}(w) = \{ x\in\clos V(\{ w[0,n] \from n\in\nn\}) : |x| = \infty\}.
\end{equation*}  

\begin{remark}
  \label{r:projecting-prefix-accessible}
  For all pseudovarieties of semigroupoids $\pv{U}$ and $\pv{V}$
  such that $\pv{N}\subseteq\pv{V}\subseteq\pv{U}$,
  the fact that the natural projection $p_{\pv{U},\pv{V}}\from\Om AU\to\Om AV$ restricts to the identity on $A^+$
  entails the equality $\dex V(w) = p_{\pv{U},\pv{V}}(\dex U(w))$.
\end{remark}

The next proposition shows that the set $\dex V(w)$ is an $\green{R}$-class under mild conditions on $\pv V$.
Its proof makes use of Corollary~\ref{c:open-mult-free-profinite}.

\begin{proposition}
  \label{p:prefixes-R-class}
  Let  $w$ be a right-infinite path over the finite-vertex graph $A$.
  Consider a pseudovariety of semigroupoids $\pv V$ containing $\pv N$.
  Then the following properties hold:
  \begin{enumerate}
  \item The set $\dex{V}(w)$ is contained in an \green{R}-class of $\Om AV$.\label{item:prefixes-R-class-1}
  \item If\/ \pv V contains \pv K, then the \green{R}-class of $\Om AV$ containing $\dex{V}(w)$ is the
    set of infinite-length prefixes of elements of
    $\dex{V}(w)$.\label{item:prefixes-R-class-2}
  \item If\/ $\pv V$ is concatenation-closed, then $\dex{V}(w)$ is an \green{R}-class of $\Om AV$.\label{item:prefixes-R-class-3}
  \end{enumerate}
\end{proposition}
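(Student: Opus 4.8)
The plan is to treat the three assertions in turn, relying repeatedly on the elementary observation that for every $x\in\dex V(w)$ and every $n\in\nn$ the path $w[0,n]$ is a prefix of $x$: indeed $x$ is a cluster point of the tail $(w[0,m])_{m\ge n}$ of the defining sequence, whose terms all lie in the set $w[0,n](\Om AV)^I$, and this set is closed, being the image of the compact set $\{t\in(\Om AV)^I\from\term(t)=\init(w[0,n])\}$ under the continuous map $t\mapsto w[0,n]t$. For~\ref{item:prefixes-R-class-1}, given $x,y\in\dex V(w)$, I would show that $y$ is a prefix of $x$; exchanging the roles of $x$ and $y$ then gives that $x$ is a prefix of $y$ as well, so $x\green Ry$. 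Choosing a subnet $(w[0,m_j])_j$ of the defining sequence converging to $y$, the observation yields $x=w[0,m_j]\,s_j$ with $s_j\in(\Om AV)^I$; passing to a convergent subnet of the net $(w[0,m_j],s_j)_j$, which lies in the compact set $D(\Om AV)$, produces a limit of the form $(y,s)\in D(\Om AV)$, so $x=ys$ by continuity of the multiplication.

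For~\ref{item:prefixes-R-class-2}, write $R$ for the $\green R$-class containing $\dex V(w)$ and $Q$ for the set of infinite-length prefixes of elements of $\dex V(w)$. The inclusion $R\subseteq Q$ is immediate: if $z\green Rx$ with $x\in\dex V(w)$, then $z$ is a prefix of $x$, and since $x$ is also a prefix of $z$ one has $|z|\ge|x|=\infty$. For $Q\subseteq R$, let $z$ be an infinite-length prefix of some $x\in\dex V(w)$. Since $\pv K\subseteq\pv V$, Proposition~\ref{p:length-n-prefix} applies to both $z$ and $x$: the length-$n$ prefix of $z$, being also a prefix of $x$ by transitivity of $\leq_{\green R}$, coincides with the length-$n$ prefix $w[0,n]$ of $x$ furnished by the observation above; thus $z=w[0,n]\,z_n$ for suitable $z_n\in(\Om AV)^I$ and every $n$. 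Taking a convergent subnet of the net $(w[0,n],z_n)_n$ in $D(\Om AV)$, with limit $(y,s)$, gives $z=ys$ with $y$ an infinite-length cluster point of $(w[0,n])_n$, hence $y\in\dex V(w)$. Then $y$ is a prefix of $z$, while $y\green Rx$ by~\ref{item:prefixes-R-class-1} and $x$ is a prefix of $z$; consequently $z\green Rx$, i.e.\ $z\in R$.

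For~\ref{item:prefixes-R-class-3}, note first that $\dex V(w)$ is nonempty, since $\Om AV$ is compact so $(w[0,n])_n$ has a cluster point, which cannot be one of its distinct isolated terms and therefore has infinite length. By~\ref{item:prefixes-R-class-1} it remains to show $R\subseteq\dex V(w)$. Let $z\in R$, say $z\green Rx$ with $x\in\dex V(w)$; then $|z|=\infty$ and $x=zt$ for some $t\in(\Om AV)^I$, and we may assume $t\in E(\Om AV)$ (otherwise $z=x\in\dex V(w)$). Fix a subnet $(w[0,n_i])_i$ converging to $x$. As $\pv V$ is concatenation-closed, this convergent net is fully factorizable by Corollary~\ref{c:open-mult-free-profinite}, so the factorization $x=zt$ yields a subnet $(w[0,n_{i_k}])_k$ and a net $(p_k,q_k)_k$ in $D(\Om AV)$ converging to $(z,t)$ with $w[0,n_{i_k}]=p_kq_k$. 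Since $A^+$ is factorial in $\Om AV$ (Corollary~\ref{c:A+-as-filter}), each $p_k$ is a path, hence a prefix $w[0,m_k]$ of the path $w[0,n_{i_k}]$; therefore $z=\lim_k w[0,m_k]\in\clos V(\{w[0,n]\from n\in\nn\})$, and as $|z|=\infty$ this means $z\in\dex V(w)$.

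The step I expect to be most delicate is the inclusion $Q\subseteq R$ in~\ref{item:prefixes-R-class-2}. It is tempting but false to conclude, from $z$ and $x$ having the same finite-length prefixes, that $z\green Rx$: for instance $a^\omega ba^\omega$ has the same finite-length prefixes as $a^\omega$ yet is not $\green R$-related to it (it is not even a prefix of $a^\omega$). One thus genuinely has to manufacture, out of the factorizations $z=w[0,n]\,z_n$, an element $y\in\dex V(w)$ lying between $z$ and $x$ in the $\leq_{\green R}$-order, which is precisely where part~\ref{item:prefixes-R-class-1} together with a compactness argument come back into play.
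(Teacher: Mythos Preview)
Your proof is correct and follows the same overall strategy as the paper's. For parts~\ref{item:prefixes-R-class-1} and~\ref{item:prefixes-R-class-3} the arguments are essentially identical (both use that $w[0,n]$ is a prefix of every element of $\dex V(w)$ and then pass to limits; both invoke full factorizability and the factoriality of $A^+$ for the last part). For the inclusion $Q\subseteq R$ in~\ref{item:prefixes-R-class-2}, the paper takes a slightly shorter route: once it is established that $w[0,n]$ is a prefix of $z$ for every $n$, the paper invokes directly the closedness of the relation $\le_{\green R}$ in $(\Om AV)^I$---since $x$ is a cluster point of $(w[0,n])_n$ and $z\le_{\green R}w[0,n]$ for all $n$, one gets $z\le_{\green R}x$ at once---thus avoiding the need to manufacture the auxiliary $y\in\dex V(w)$. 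Your compactness argument achieves the same end and is perfectly valid. Two small points of phrasing: $w[0,n]$ has length $n+1$, not $n$; and in your concluding sentence for~\ref{item:prefixes-R-class-2} the hypothesis that $z$ is a prefix of $x$ should be invoked explicitly alongside the deduced fact that $x$ is a prefix of $z$ in order to conclude $z\green Rx$.
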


\begin{proof}
  \ref{item:prefixes-R-class-1}. Let $u,v\in \dex{V}(w)$. Then, there is a net $(u_i,v_i)_{i\in I}$
  of pairs of finite-length prefixes of $w$ converging in $\Om AV\times \Om
  AV$ to $(u,v)$. Fix $i\in I$. Since all finite prefixes of $w$ are
  $\le_{\green{R}}$-comparable elements of $A^+$, and since $u,v$ are infinite-length
  pseudowords, there is $k_i\in I$ such that $k_i\le j$ implies
  $u_j\le_{\green{R}}v_i$. Because $\le_{\green{R}}$ is a closed
  relation in $(\Om AV)^I$, we obtain $u\le_{\green{R}}v_i$ for every $i\in I$, and
  consequently $u\le_{\green{R}}v$. By symmetry, we also have
  $v\le_{\green{R}}u$, thus showing that $\dex V(w)$ is contained in
  an \green{R}-class of $\Om AV$.

  \ref{item:prefixes-R-class-2}.
  Assume that $\pv V$ contains $\pv K$, and denote by $R$ the $\green{R}$-class of $\Om AV$ containing
  $\dex  V(w)$, whose existence was established in the previous paragraph. 
  Let $p\in \Om AV$ be an infinite-length prefix of an element $v\in \dex  V(w)$.
  Then, as $\pv V$ contains $\pv K$, we have the equality $\overrightarrow{v}=\overrightarrow{p}$.
  From $v\in\dex V(w)$, we get, for every $n\in\nn$,  that $v\in w{[0,n]}\cdot \Om AV$,
  thus $p\leq_{\green{R}} w{[0,n]}$. Since the relation $\leq_{\green{R}}$ is closed in $\Om AV$, and
  $v$ is a cluster point of the sequence $(w[0,n])_{n\in\nn}$,
  we deduce that $p\leq_{\green{R}} v$. This shows that $p\in R$.
  To see that, conversely, every element of $R$ is
  an infinite-length prefix of some element of $\dex V(w)$,
  recall that the elements of $\dex V(w)$ have infinite length, and
  that the set of infinite-length pseudopaths of $\Om AV$ is an ideal (Remark~\ref{r:infinite-length-pseudowords-form-ideal}).

  \ref{item:prefixes-R-class-3}.
  Assume that $\pv V$ is closed under concatenation. 
  Note in particular that \pv V contains \pv A (see Remark~\ref{r:concatenation-closed-gV-versus-V}) and thus also \pv K. 
  Then, by the already established property~\ref{item:prefixes-R-class-2},
  to prove that the set $\dex{V}(w)$ is an $\green{R}$-class of $\Om AV$ it suffices to show
  that it is closed under taking infinite-length prefixes.
  Let $p\in \Om AV$ be an infinite-length prefix of an element $v\in \dex  V(w)$. Take $s\in(\Om AV)^I$ such that $v=ps$, and let $(v_i)_{i\in I}$
  be a net of finite prefixes of $w$ such that $v=\lim_{i\in I}v_i$.
  By Corollary~\ref{c:open-mult-free-profinite}, we may find a subnet $(v_{i_j})_{j\in J}$ of $(v_i)_{i\in I}$
  and a net $(p_j, s_j)_{j\in J}$ of composable pseudopaths
  converging to $(p,s)$ in $\Om AV\times (\Om AV)^I$ such that $p_j s_j = v_{i_j}$
  for every $j\in J$. Since, by assumption, $v_{i_j}\in A^+$, it follows from Corollary~\ref{c:A+-as-filter} that $p_j,s_j\in A^+$, for every $j\in J$.
  Then, the equality $v_{i_j}=p_j s_j$ also yields that $p_j$ is a finite prefix of $w$, for every $j\in J$.
  As $p$ has infinite length, it follows that $p\in \dex V(w)$, thus establishing that $\dex{V}(w)$ is closed under taking infinite-length prefixes. As already remarked, this concludes the proof.
\end{proof}

Without the assumption that $\pv V$ is concatenation-closed, it may happen that $\dex{V}(w)$ is not an \green{R}-class of $\Om AV$,
as seen in the next example. There, we use the fact that, for every profinite semigroup $S$ and $s\in S$, the sequence $(s^{n!})_{n\in\nn}$
converges in $S$ to an idempotent, denoted $s^\omega$ (see, for
instance, \cite[Proposition 3.9.2]{Almeida&ACosta&Kyriakoglou&Perrin:2020b}).

\begin{example}
  Consider the alphabet $A=\{\ltr{a},\ltr{b}\}$ and the equidivisible
  pseudovariety $\pv V=\pv{\loc \pv I}$. Let $w$ be the constant
  sequence $\ltr{a}\ltr{a}\ltr{a}\ltr{a}\ltr{a}\cdots \in A^\nn$. We
  clearly have $\ltr{a}^\omega\in \dex {\pv{\loc \pv I}}(w)$. On the
  other hand, in $\Om A{\loc \pv I}$ the equality
  $\ltr{a}^\omega=\ltr{a}^\omega \ltr{b} \ltr{a}^\omega$ holds, thus
  $\ltr{a}^\omega$ and $\ltr{a}^\omega \ltr{b}$ are
  $\green{R}$-equivalent elements of $\Om A{\loc \pv I}$. But
  $\ltr{a}^\omega \ltr{b}\notin \dex {\pv{\loc \pv I}}(w)$, since the
  suffix of length $1$ of any cluster point in $\Om A{\pv{\loc \pv
      I}}$ of the sequence $(\ltr{a}^n)_{n\geq 1}$ is clearly the
  letter $\ltr{a}$.
\end{example}

The following proposition is used in our parallel work~\cite{Almeida&ACosta&Goulet-Ouellet:2024b}.

\begin{proposition}\label{p:cutting-finite-prefixes-Pw}
  Let $A$ be a finite-vertex graph and $w$ be
  a right infinite path over~$A$. Consider a left finitely cancelable pseudovariety of semigroupoids $\pv V$ containing $\pv N$. For every $y\in \dex V(w)$ and $k\in\nn$, we have $y^{(k)}\in \dex V(w{[}k,\infty{)})$.
\end{proposition}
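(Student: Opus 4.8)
The plan is to realize $y^{(k)}$ as a cluster point of the sequence of finite prefixes of $w{[}k,\infty{)}$, by splitting, at position $k$, a net of finite prefixes of $w$ that converges to $y$. Throughout I would use only left finite cancelability and compactness; note that full factorizability is \emph{not} available here, since $\pv V$ is not assumed concatenation-closed.

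First I would check that the fixed path $p = w{[}0,k{)}$ is a prefix of $y$. For every $n\geq k$ the path $p$ is a prefix of $w{[}0,n{)}$, i.e.\ $w{[}0,n{)}\le_{\green{R}}p$; since $\le_{\green{R}}$ is a closed relation on $(\Om AV)^I$ and $y$ is a cluster point of $(w{[}0,n{)})_{n\in\nn}$, it follows that $y\le_{\green{R}}p$, that is, $p$ is a prefix of $y$. As $\pv V$ is left finitely cancelable and $|p|=k$, this makes $y^{(k)}$ well defined, with $p$ the unique prefix of $y$ of length $k$ and $y = p\cdot y^{(k)}$.

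Next I would build the required net. Pick a subnet $(w{[}0,n_i{)})_{i\in I}$ of $(w{[}0,n{)})_{n\in\nn}$ converging to $y$; since $|y|=\infty$ and $\ell_A$ is continuous, we may assume $n_i\geq k$ for every $i\in I$. Split $w{[}0,n_i{)} = p\cdot w{[}k,n_i{)}$ in $A^+$. By compactness of $E(\Om AV)$, after passing to a further subnet we may assume that $(w{[}k,n_i{)})_{i\in I}$ converges to some $u'\in\Om AV$. By continuity of $\term$, both $\term(u')$ and $\init(p)$ equal $\term(w_k)$, so $(p,u')\in D(\Om AV)$; hence continuity of multiplication gives $p\cdot u' = \lim_i p\cdot w{[}k,n_i{)} = \lim_i w{[}0,n_i{)} = y$. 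Comparing with $y = p\cdot y^{(k)}$ and invoking left finite cancelability, we conclude $u' = y^{(k)}$.

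Finally, the successive passages to subnets keep the index map $i\mapsto n_i$ monotone and cofinal in $\nn$, so $(w{[}k,n_i{)})_{i\in I}$ is a subnet of the sequence $(w{[}k,\infty{)}{[}0,m{)})_{m\in\nn}$ of finite prefixes of $w{[}k,\infty{)}$; therefore its limit $u' = y^{(k)}$ is a cluster point of that sequence, that is, $y^{(k)}\in\dex V(w{[}k,\infty{)})$, as desired. The points that would require the most care are the bookkeeping ensuring that $(w{[}k,n_i{)})_i$ is genuinely a subnet of the prefix sequence of $w{[}k,\infty{)}$, and the verification that the limiting pair $(p,u')$ lies in $D(\Om AV)$ --- the one place where the semigroupoid setting genuinely intervenes over the semigroup one.
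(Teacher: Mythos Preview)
Your argument is correct and follows essentially the same route as the paper's proof: take a net of finite prefixes of $w$ converging to $y$, split each at position $k$, pass to a convergent subnet of the tails, and use left finite cancelability to identify the limit with $y^{(k)}$. Your version is merely more explicit about two points the paper leaves implicit---the composability of the limiting pair $(p,u')$ and the bookkeeping ensuring $(w{[}k,n_i{)})_i$ is a genuine subnet of the prefix sequence of $w{[}k,\infty{)}$---while your preliminary paragraph verifying that $p$ is a prefix of $y$ is in fact redundant, since this falls out of the equality $p\cdot u'=y$ established later.
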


\begin{proof}
  Set $u=w{[}0,k{)}$. We may take a net $(y_i)_{i\in I}$ of finite prefixes
  of $w$ that converges to $y$ in $\Om A{V}$. Since $y$
  has infinite length, we may as well suppose that $u$ is a prefix of
  $y_i$, for every $i\in I$. For each $i\in I$, let $z_i=u^{-1}y_i$.
  Note that $z_i$ is a prefix of $w{[}k,\infty{)}$ and $\lim
  |z_i|=\infty$. By taking subnets, we may suppose that the net
  $(z_i)_{i\in I}$ converges to some pseudopath $z$ in $\Om A{V}$. Therefore, $z$
  belongs to $\dex{V}(w{[}k,\infty{)})$. Finally, we have $uz=\lim
  uz_i=\lim y_i=y$, thus $z=u^{-1}y=y^{(k)}$.
\end{proof}

For an alphabet $A$, an element $w$ of $A^\nn$
is said to be \emph{recurrent} if for every $n\in\nn$, there is some
$m>n$ such that $w[0,n)=w[m,m+n)$. Equivalently, $w\in A^\nn$ is
recurrent when every finite factor of $w$ occurs infinitely often in
$w$.

As an application of Proposition~\ref{p:prefixes-R-class}, we proceed
to deduce the following characterization of recurrent right-infinite paths, the main result of the present section. This characterization is necessary
for showing one the main theorems in~\cite{Almeida&ACosta&Goulet-Ouellet:2024b}.

\begin{theorem}
  \label{t:idempotents-in-Pw-pseudopath-version}
  Consider a finite-vertex graph $A$.
  Let $w$ be a right-infinite path over~$A$.
  The following statements are equivalent:
  \begin{enumerate}
  \item $w$ is recurrent;
    \label{i:idempotents-in-Pw-1}
  \item $\dex {Sd}(w)$ contains an idempotent;
    \label{i:idempotents-in-Pw-2}
  \item $\dex V(w)$ contains an idempotent for all pseudovarieties of semigroupoids $\pv V$ containing $\pv N$; \label{i:idempotents-in-Pw-3}
  \item $\dex{\glob\loc Sl}(w)$ contains an idempotent.
    \label{i:idempotents-in-Pw-4}
  \end{enumerate}
\end{theorem}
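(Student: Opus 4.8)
The plan is to run the cycle $(i)\Rightarrow(ii)\Rightarrow(iii)\Rightarrow(iv)\Rightarrow(i)$; the middle two implications are essentially formal, and the weight lies in $(i)\Rightarrow(ii)$ and $(iv)\Rightarrow(i)$. For $(ii)\Rightarrow(iii)$, fix a pseudovariety of semigroupoids $\pv V$ containing $\pv N$: the projection $p_{\pv{Sd},\pv V}\from\Om A{Sd}\to\Om AV$ restricts to the identity on $A^+$, hence maps $\dex{Sd}(w)$ onto $\dex V(w)$ by Remark~\ref{r:projecting-prefix-accessible}, and being a continuous homomorphism it carries idempotents to idempotents, so the image of an idempotent of $\dex{Sd}(w)$ is one of $\dex V(w)$. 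For $(iii)\Rightarrow(iv)$: every finite nilpotent semigroup $S$ has a single idempotent $0$ with $0S0=\{0\}\in\pv{Sl}$, so $\pv N\subseteq\pv{\loc Sl}\subseteq\glob\pv{\loc Sl}$, and $(iv)$ is the instance $\pv V=\glob\pv{\loc Sl}$ of $(iii)$.

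For $(iv)\Rightarrow(i)$, let $e\in\dex{\glob\loc Sl}(w)$ be idempotent. Since $eB_2e$ is a two-element semilattice or trivial for each idempotent $e$ of $B_2$, one has $B_2\in\pv{\loc Sl}$, so Proposition~\ref{p:faithful-open-case-B2} applies to $\gamma=\gamma_A^{\pv{\loc Sl}}\from\Om A{\glob\loc Sl}\to\Om{E(A)}{\loc Sl}$; as $\gamma$ is a continuous homomorphism preserving length, $\gamma(e)$ is an idempotent of infinite length in $\dex{\loc Sl}(w)$, where now $w$ is read as a right-infinite word over $E(A)$. It suffices to show that every prefix $u_n:=w[0,n)$ occurs in $w$ at a position $\geq1$: if the least such positions stay bounded as $n$ grows, then $w$ is purely periodic, and otherwise every prefix — hence every factor — of $w$ occurs infinitely often, so in either case $w$ is recurrent. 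Fix $n$; collapsing to a single new letter $\star$ all edges not occurring in $u_n$, the induced continuous homomorphism $\Om{E(A)}{\loc Sl}\to\Om\Sigma{\loc Sl}$ ($\Sigma$ finite) carries $\gamma(e)$ to an idempotent $\bar e\in\dex{\loc Sl}(w')$, where $w'$ is a word over $\Sigma$; since $u_n$ contains no collapsed letter, it suffices to produce an occurrence of $u_n$ at a positive position of $w'$. Suppose there is none. The language $L'=\Sigma^+u_n\Sigma^*=\bigcup_{b\in\Sigma}\Sigma^*(bu_n)\Sigma^*$ of words where $u_n$ occurs at a positive position is locally testable, hence $\pv{\loc Sl}$-recognizable; take a recognizing quotient $\eta\from\Sigma^+\to T$ ($T\in\pv{\loc Sl}$), so that $\clos{\loc Sl}(L')=\hat\eta^{-1}(\eta(L'))$ is clopen (Theorem~\ref{t:V-recognizability} and Proposition~\ref{p:recognition-pro-v-semigroupoid}). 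By assumption no prefix $w'[0,k)$ lies in $L'$, so $\hat\eta(\bar e)=\lim\hat\eta(w'[0,k_\alpha))$ lies in the finite, hence closed, set $T\setminus\eta(L')$. On the other hand $u_n$ is a prefix of $\bar e$ (as $\leq_{\green R}$ is closed), say $\bar e=u_nt$; idempotency gives $\bar e=\bar e\,\bar e=(\bar eu_n)t$; and $\bar eu_n=\lim w'[0,k_\alpha)u_n$ lies in $\clos{\loc Sl}(L')$, which is a right ideal of $(\Om\Sigma{\loc Sl})^I$ because $L'\Sigma^*=L'$. Hence $\bar e=(\bar eu_n)t\in\clos{\loc Sl}(L')$ and so $\hat\eta(\bar e)\in\eta(L')$: a contradiction.

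Finally $(i)\Rightarrow(ii)$, which I expect to be the main obstacle. Work in $\Om A{Sd}$. Since $\pv{Sd}$ is concatenation-closed (recognizable languages being closed under concatenation), $\dex{Sd}(w)$ is an entire $\green{R}$-class by Proposition~\ref{p:prefixes-R-class}(iii), so it is enough to produce an idempotent in that class. Fixing a prefix $u_n=w[0,n)$, recurrence yields pairwise non-overlapping occurrences $0=p_0<p_1<\cdots$, and $(i,j)\mapsto w[p_i,p_j)$ is an \emph{additive} map into the compact local semigroup of $\Om A{Sd}$ at the vertex $v=\term(w_0)$, all of whose values have $u_n$ as a prefix. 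A standard ultrafilter argument on an additive map produces an idempotent: for a nonprincipal $\mathcal U$, the elements $g(i)=\lim_{j\to\mathcal U}w[p_i,p_j)$ satisfy $g(i)=w[p_i,p_{i'})\,g(i')$ for $i<i'$, hence $g(i)=g(i)e$ with $e=\lim_{i\to\mathcal U}g(i)$, and one further limit gives $e^2=e$; meanwhile $g(0)=\lim_{j\to\mathcal U}w[0,p_j)$ lies in $\dex{Sd}(w)$. What remains — and is the delicate point, the one place where recurrence is genuinely used — is to obtain an idempotent \emph{inside} $\dex{Sd}(w)$, i.e.\ to show the $\green{R}$-class $\dex{Sd}(w)$ is regular; the plan is to show $e$ can be chosen $\green{R}$-equivalent to $g(0)$ by exploiting, beyond the identities above, full factorizability of convergent nets in $\Om A{Sd}$ (Corollary~\ref{c:open-mult-free-profinite}) to refactor the infixes $w[p_i,p_j)$ through prefixes of $w$.
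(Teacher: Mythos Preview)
Your treatment of $(ii)\Rightarrow(iii)\Rightarrow(iv)$ matches the paper's. Your $(iv)\Rightarrow(i)$ is correct but more roundabout than necessary: rather than descending via $\gamma$ to $\Om{E(A)}{\loc Sl}$ and then collapsing to a finite alphabet, the paper works directly in $\Om A{\glob\loc Sl}$, using the (separately proved) fact that for $\pv V\supseteq\pv{\loc Sl}$ and $u,v\in A^+$ the set $uE\bigl((\Om AV)^I\bigr)vE\bigl((\Om AV)^I\bigr)$ is clopen. With $v=u$ any finite prefix of $w$, idempotency gives $e=utut$ in this clopen set, hence some finite prefix of $w$ lies in $uE(A^*)uE(A^*)$, and $w$ is recurrent. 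Your argument reproves the same recognizability fact inline for semigroups; it works, but the extra reductions are not needed.

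The genuine gap is in $(i)\Rightarrow(ii)$. Your ultrafilter construction produces an idempotent $e=\lim_i g(i)$ with $g(0)e=g(0)$, but $e$ is a double limit of \emph{infixes} $w[p_i,p_j)$, and you give no mechanism to place it in $\dex{Sd}(w)$ (equivalently, to show $e\green R g(0)$). The vague plan to ``refactor the infixes through prefixes'' via full factorizability does not do this: factorizability lets you split a convergent net of prefixes, not reassemble infixes into prefixes. The paper's argument avoids the whole detour and is short. Pick any $u\in\dex{Sd}(w)$ with $u=\lim u_i$ along finite prefixes; recurrence gives, for each $i$, a path $v_i$ with $u_iv_iu_i$ a prefix of $w$; pass to a subnet so $v_i\to v$, whence $uvu\in\dex{Sd}(w)$. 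By Proposition~\ref{p:prefixes-R-class}\ref{item:prefixes-R-class-1} there is $t$ with $u=uvut$, and iterating gives $u=(uv)^\omega u t^\omega$. Now $(uv)^\omega$ is an idempotent and an infinite-length prefix of $u$; since $\pv{Sd}$ is concatenation-closed, $\dex{Sd}(w)$ is the whole $\green R$-class of $u$ by Proposition~\ref{p:prefixes-R-class}\ref{item:prefixes-R-class-3}, so $(uv)^\omega\in\dex{Sd}(w)$. You already invoke both parts of Proposition~\ref{p:prefixes-R-class}; what you are missing is simply this direct construction of $(uv)^\omega$ as a prefix of an element of $\dex{Sd}(w)$.
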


For the proof, we require the following lemma.

\begin{lemma}
  \label{l:LSl-clopen}
  Let $A$ be a finite-vertex graph, $\pv{V}$ be a pseudovariety of
  semigroupoids containing $\pv{\loc Sl}$ and $u,v\in A^+$. Then the set
  $uE\bigl((\Om AV)^I\bigr) vE\bigl((\Om AV)^I\bigr)$
  is clopen in $\Om AV$.
\end{lemma}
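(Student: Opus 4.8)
The plan is to show that the set $X = uE\bigl((\Om AV)^I\bigr)vE\bigl((\Om AV)^I\bigr)$ is clopen by exhibiting it as the preimage of a point (or a clopen set) under a continuous quotient homomorphism onto a finite semigroupoid in $\pv V$, and then invoking Proposition~\ref{p:recognition-pro-v-semigroupoid}. The key observation is that $u$ and $v$ are \emph{finite} paths over $A$, hence only finitely many edges of $A$ are involved in writing them down; this finiteness is what will let us build an explicit finite quotient of $\Om AV$ detecting the pattern ``contains $u$ as a prefix, then $v$ somewhere after''.

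First I would reduce to $V(A)$ finite (it is), and observe that being clopen is preserved under the isomorphism $\Om AV\cong\varprojlim_{\theta\in\Theta}\Om{A/\theta}V$ of Corollary~\ref{c:inverse-limit-finite-retracts}, together with Proposition~\ref{p:recognition-pro-v-semigroupoid}; so it suffices to produce, for a suitable finite-index graph equivalence $\theta$ identifying none of the edges occurring in $u$ or $v$, a continuous quotient homomorphism $\varphi\from\Om AV\to T$ with $T\in\pv V$ such that $X=\varphi^{-1}\varphi(X)$. Next, I would construct $T$ as a ``prefix–automaton'' semigroupoid: morally, $T$ should be a relatively free image detecting the factor language $L = uE(A^*)vE(A^*)\subseteq A^+$ (more precisely its intersection with each hom-set). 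The crucial point is that $L$ is $\pv V$-recognizable: it is a finite Boolean combination of languages of the form $E(A^*)aE(A^*)$ and singletons $\{a\}$ for $a\in E(A)$ appearing in $u,v$, pieced together with concatenations — exactly the kind of manipulation already carried out in the proof of Corollary~\ref{c:concatenation-finite-retract-open}. Here the hypothesis $\pv V\supseteq\pv{\loc Sl}$ (which contains $\pv N$, since $\pv N\subseteq\pv{\loc Sl}$) enters: $\pv{\loc Sl}$-recognizability of $x^+y^+$-type patterns is the classical syntactic computation, and the idempotent-localized semilattice structure is precisely what recognizes ``$u$ is a prefix, then $v$ occurs after''. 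Once $L$ is known to be $\pv V$-recognizable, Theorem~\ref{t:V-recognizability} gives that $\clos V(L)$ is clopen in $\Om AV$; and one checks directly that $X = \clos V(L)$, using that $A^+$ is dense in $\Om AV$ (Proposition~\ref{p:A+-embeds-isolated}), that $u,v\in A^+$ are isolated, and that multiplication against fixed isolated paths is continuous.

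The identity $X = \clos V(L)$ is the technical heart and deserves care. The inclusion $\clos V(L)\subseteq X$: take a net $(w_i)$ in $L$ converging to $w$; write $w_i = u\,p_i\,v\,q_i$ with $p_i,q_i\in (A^*)$, pass to a subnet so that $(p_i)$ and $(q_i)$ converge in $(\Om AV)^I$ to some $p,q$, and conclude $w = upvq\in X$ by continuity of multiplication (and the fact that $D(\Om AV)$ is clopen, so composability is preserved in the limit). The reverse inclusion $X\subseteq\clos V(L)$: given $w = u x v y$ with $x,y\in E((\Om AV)^I)$, approximate $x$ by a net of finite paths $x_i\in A^*$ (using density and that $A^+$ is dense, together with the fact that empty pseudopaths are isolated when $V(A)$ is finite) and similarly $y$ by $y_i$, with composability arranged; then $u x_i v y_i\in L$ and $u x_i v y_i\to w$, so $w\in\clos V(L)$.

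The main obstacle I anticipate is the careful bookkeeping in the reduction to a finite graph: one must ensure the graph equivalence $\theta$ chosen does not collapse any edge occurring in $u$ or in $v$ (so that the images of $u,v$ are again isolated paths of the prescribed shape in $\Om{A/\theta}V$), and must verify that $X$ is the full preimage under $\pro q_\theta$ of the corresponding set downstairs — this uses that $\pro q_\theta$ preserves length and restricts to the identity on the relevant part of $A^+$, together with the factoriality of $A^+$ in $\Om AV$ (Corollary~\ref{c:A+-as-filter}) to control how factorizations of a finite path can lift. Alternatively, and perhaps more cleanly, one bypasses the explicit $\theta$ entirely by just citing that $L$ is $\pv V$-recognizable (hence its closure is clopen by Theorem~\ref{t:V-recognizability}) and then proving $X=\clos V(L)$ directly as above; that route avoids the finite-graph gymnastics and I would present it that way.
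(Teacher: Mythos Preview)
Your overall architecture matches the paper's: set $L=uE(A^*)vE(A^*)$, show $X=\clos V(L)$, show $L$ is $\pv V$-recognizable, and invoke Theorem~\ref{t:V-recognizability}. Your verification of $X=\clos V(L)$ is fine (indeed more detailed than the paper, which simply asserts the equality).

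The gap is in the recognizability step. You propose to build $L$ as ``a finite Boolean combination \ldots\ pieced together with concatenations'', citing Corollary~\ref{c:concatenation-finite-retract-open}. But that corollary \emph{assumes} concatenation closure as a hypothesis, and $\pv{\loc Sl}$ is \emph{not} concatenation-closed --- the paper itself exhibits this in the example after Proposition~\ref{p:open-in-EA-implies-open-in-global} (the language $x^+y^+$ is $\pv{\loc Sl}$-recognizable but $x^+y^+x^+y^+$ is not). So you cannot manufacture $L$ by concatenating recognizable pieces, and the reference to Corollary~\ref{c:concatenation-finite-retract-open} is misplaced. Your remark that ``$\pv{\loc Sl}$-recognizability of $x^+y^+$-type patterns is the classical syntactic computation'' is the right instinct, but it does not by itself handle the possibly infinite edge set of $A$, nor does it supply a concatenation-free description of $uA^*vA^*$.

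The paper repairs both issues at once. First it passes to a \emph{finite alphabet}: letting $B$ be the (finite) set of edges occurring in $u$ or $v$ and $C=B\cup\{c\}$, the semigroup homomorphism $\varphi\from A^+\to C^+$ fixing $B$ and sending every other edge to $c$ satisfies $\varphi^{-1}(uC^*vC^*)=uE(A^*)vE(A^*)$, so it suffices to show $uC^*vC^*$ is $\pv{\loc Sl}$-recognizable over the finite alphabet $C$. Second, over finite $C$ one avoids concatenation via the Boolean identity
\[
  uC^*vC^* \;=\; uC^*\cap\Bigl(\bigcup_{w\in C^{|u|}}C^*wvC^*\Bigr),
\]
a finite combination of the classical locally testable languages $xC^*$ and $C^*yC^*$. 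Your route~1 via a finite-index graph equivalence $\theta$ could be made to do the first reduction, but you would still need an identity of this sort to finish without appealing to concatenation closure.
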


\begin{proof}
  It is well known that, if $D$ is a finite alphabet and $x,y\in D^+$,
  then the languages $xD^*$ and $D^*yD^*$ are $\pv{\loc Sl}$-recognizable
  in $D^+$ \cite[Theorem~5.2.1]{Pin:1986;bk}. By the equality
  \begin{equation*}
    uD^*vD^* = uD^*\cap\left(\bigcup_{w\in D^{|u|}}D^*wvD^*\right),
  \end{equation*}
  it follows that $uD^*vD^*$ is also $\pv{\loc Sl}$-recognizable in~$D^+$.
  
  By Theorem~\ref{t:V-recognizability}, since $\clos
  V\bigl(uE(A^*)vE(A^*)\bigr) = uE\bigl((\Om AV)^I\bigr)vE\bigl((\Om
  AV)^I\bigr)$, it suffices to show that the language $uE(A^*)vE(A^*)$
  is \pv V-recognizable in~$A^+$. Let $B$ be the alphabet whose
  letters are the edges of $A$ that are factors in $A^+$ of $u$ or
  $v$. Take an extra letter $c$ not in $B$, and consider the alphabet
  $C=B\cup\{c\}$. Let $\varphi\from A^+\to C^+$ be the homomorphism
  fixing the edges of $B$ and sending all other edges of $A$ to $c$.
  Since $\varphi^{-1}(uC^*vC^*) =uE(A^*)vE(A^*)$ it follows from the
  preceding paragraph that $uE(A^*)vE(A^*)$ is \pv V-recognizable
  in~$A^+$, which completes the proof of the lemma.
\end{proof}

\begin{proof}[{Proof of Theorem~\ref{t:idempotents-in-Pw-pseudopath-version}}]
  First, let us prove the implication
  \ref{i:idempotents-in-Pw-1}$\Rightarrow$\ref{i:idempotents-in-Pw-2}.
  Let $u\in \dex {Sd}(w)$ and take a net $(u_i)_{i\in I}$ of finite
  prefixes of $w$ converging to $u$. Assuming that $w$ is recurrent,
  for each $i\in I$ there is a path $v_i$ such that $u_iv_iu_i$ is a
  prefix of $w$. Taking subnets, we may assume that $(v_i)_{i\in I}$
  converges to some pseudopath $v\in\Om A{Sd}$. Note that $uvu = \lim
  u_iv_iu_i$ belongs to $\dex {Sd}(w)$. By
  Proposition~\ref{p:prefixes-R-class}\ref{item:prefixes-R-class-1}, there exists $t\in(\Om A{Sd})^I$ such
  that $u = uvut$.
  Observe that $uv$ and $t$ are loops at $\init(u)$.
  By induction, we obtain
  $u=(uv)^kut^k$ for every $k\in\nn$, thus $u = (uv)^\omega
  ut^\omega$.
  Since the pseudovariety $\pv{Sd}$ is concatenation-closed, it then
  follows from Proposition~\ref{p:prefixes-R-class}\ref{item:prefixes-R-class-3}
  that the idempotent $(uv)^\omega$ belongs to $\dex {Sd}(w)$.

  The implications \ref{i:idempotents-in-Pw-2}$\Rightarrow$\ref{i:idempotents-in-Pw-3}
  is immediate in view of~Remark~\ref{r:projecting-prefix-accessible}, while the implication \ref{i:idempotents-in-Pw-3}$\Rightarrow$\ref{i:idempotents-in-Pw-4}
  is trivial.

  It remains only to establish the implication
  \ref{i:idempotents-in-Pw-4}$\Rightarrow$\ref{i:idempotents-in-Pw-1}.
  Assume that $e$ is an idempotent in $\dex{\glob\loc Sl}(w)$. Let $u$
  be a finite prefix of $w$, so that $e = ut$ for some $t\in\Om
  A{\glob\loc Sl}$. Let $K = uE\bigl((\Om A{\glob\loc
    Sl})^I\bigr)uE\bigl((\Om A{\glob\loc Sl})^I\bigr)$, which is a
  clopen subset of $\Om A{\glob\loc Sl}$ by Lemma~\ref{l:LSl-clopen}.
  Since $e$ is idempotent, we have that $e=utut \in K$. As $e\in
  \dex{\glob\loc Sl}(w)$, it follows that there is a finite prefix of
  $w$ in $K$, which, by Corollary~\ref{c:A+-as-filter}, must be of the
  form $uxuy$ for some $x,y\in A^*$. Therefore, as $uxu$ is a prefix
  of $w$ with $x\in A^*$, we conclude that $w$ is recurrent.
\end{proof}

\begin{remark}\label{r:semigroupoids-vs-categories}
  In this paper, we opted to work primarly in the framework of pseudovarieties of semigroupoids instead of
  \emph{pseudovarieties of categories}, as the former encompasses more relevant situations.
  Concerning the study of profinite objects, the category framework
  is the preferred one in~\cite{Jones:1996}, while the semigroupoid one is preferred in~\cite{Almeida&Weil:1996}. 
  But as discussed in both papers, the two perspectives are closely related.
  In particular, denoting by $\pv{Cat}$ the pseudovariety of all finite categories, and recalling
  that $\pv{Sd}$ denotes the pseudovariety of all finite semigroupoids, the corresponding free objects are related by
  the equality $\Om A{Cat}=(\Om A{Sd})^I$, for every finite-vertex graph
  $A$ as it is easy to see that the profinite category $(\Om A{Sd})^I$
  is free over the graph $A$.
  Hence, it is reasonable to define $\dex {Cat}(w)$ as being $\dex {Sd}(w)$.
\end{remark}

In particular, Theorem~\ref{t:idempotents-in-Pw-pseudopath-version}
has the following immediate corollary. This is crucially used in our companion paper~\cite{Almeida&ACosta&Goulet-Ouellet:2024b},
where we opt to work with $\pv{Cat}$ instead of $\pv{Sd}$ for the sake of a somewhat more straightforward presentation.

\begin{corollary}
  \label{c:idempotents-in-Pw-pseudopath-version}
  Consider a finite-vertex graph $A$.
  Let $w$ be a right-infinite path over~$A$.
  Then $w$ is recurrent if and only if $\dex {Cat}(w)$ contains an idempotent. 
\end{corollary}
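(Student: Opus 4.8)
The plan is to reduce the statement directly to Theorem~\ref{t:idempotents-in-Pw-pseudopath-version}. By the convention fixed in Remark~\ref{r:semigroupoids-vs-categories}, the set $\dex{Cat}(w)$ is \emph{defined} to be $\dex{Sd}(w)$, so the asserted equivalence is, at face value, nothing but the equivalence of conditions \ref{i:idempotents-in-Pw-1} and \ref{i:idempotents-in-Pw-2} of that theorem, and the only thing that needs a word of justification is that ``$\dex{Cat}(w)$ contains an idempotent'' and ``$\dex{Sd}(w)$ contains an idempotent'' mean the same thing. First I would recall that every prefix accessible pseudopath has infinite length (as noted right after Definition~\ref{def:prefix-accessible-pseudopaths}); in particular, no element of $\dex{Cat}(w)$ is one of the local identities $1_v$ adjoined to $\Om A{Sd}$ to form $\Om A{Cat}=(\Om A{Sd})^I$. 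Hence an idempotent lying in $\dex{Cat}(w)$ is precisely an idempotent of the semigroupoid $\Om A{Sd}$ lying in $\dex{Sd}(w)$. Having made this identification, I would simply invoke the equivalence \ref{i:idempotents-in-Pw-1}$\Leftrightarrow$\ref{i:idempotents-in-Pw-2} of Theorem~\ref{t:idempotents-in-Pw-pseudopath-version} to conclude.

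There is essentially no obstacle: the substantive content has already been discharged in the proof of Theorem~\ref{t:idempotents-in-Pw-pseudopath-version}. The only mildly technical point arises if one prefers not to lean on the definitional convention of Remark~\ref{r:semigroupoids-vs-categories} and instead computes $\dex{Cat}(w)$ intrinsically, as the set of cluster points of the sequence $(w[0,n])_{n\in\nn}$ inside the profinite category $\Om A{Cat}=(\Om A{Sd})^I$. In that case one must observe that $\Om A{Sd}$ is a clopen subspace of $(\Om A{Sd})^I$ which contains every finite prefix $w[0,n]$, so that the cluster points of $(w[0,n])_{n\in\nn}$ in the larger space are exactly those in $\Om A{Sd}$, namely the elements of $\dex{Sd}(w)$. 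This is the routine check behind the convention, after which the argument proceeds exactly as above.
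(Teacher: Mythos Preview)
Your proposal is correct and matches the paper's own approach: the corollary is stated as an immediate consequence of Theorem~\ref{t:idempotents-in-Pw-pseudopath-version} via the identification $\dex{Cat}(w)=\dex{Sd}(w)$ from Remark~\ref{r:semigroupoids-vs-categories}. Your additional justification that no element of $\dex{Cat}(w)$ can be an adjoined local identity (and the alternative verification that $\Om A{Sd}$ is clopen in $(\Om A{Sd})^I$) is more than the paper spells out, but entirely in the same spirit.
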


\section{Stabilizers}
\label{sec:stabilizers}

Consider a semigroupoid $S$. The \emph{right stabilizer} of an edge $x$ of $S$ is the set
\begin{equation*}
  \rstab(x)=\{y\in E(S^I)\from xy=x\}.
\end{equation*}
Note that $\rstab(x)$ is a submonoid of the local monoid $S^I(x)$. If moreover $S$ is a topological semigroupoid,
then $\rstab(x)$ is a closed submonoid of the topological monoid~$S^I(x)$.

Following the terminology from~\cite{Henckell&Rhodes&Steinberg:2010b},
when studying a subsemigroupoid $T$ of a semigroupoid~$S$, we
designate as an \emph{internal $\green{L}$-chain of $T$} a set $C$ of
elements of $T$ such that, for all $x,y\in C$, one has
$x\leq_{\green{L}(T)}y$ or $y\leq_{\green{L}(T)}x$. This terminology
serves the purpose of avoiding confusion between the relations
$\leq_{\green{L}(S)}$ and $\leq_{\green{L}(T)}$. If $C=T$ then we simply say that $T$ is an
internal $\green{L}$-chain. Likewise, we say that an element $x$ of
$T$ is \emph{internally regular} if it is regular in $T$, that is, if
$x\in xTx$.

We need the following fact.

\begin{lemma}
  \label{l:internal-l-chain}
  Let $S$ be a semigroupoid and $x$ an edge of $S$. If $S$ is equidivisible, then $\rstab(x)$ is an internal $\green{L}$-chain.
\end{lemma}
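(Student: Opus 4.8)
The plan is to obtain the $\leq_{\green{L}}$-comparability of any two elements of $\rstab(x)$ from a single use of equidivisibility. Fix $y,z\in\rstab(x)$; by definition $xy=x=xz$, so in particular $xy=xz$. Before feeding this equality to the equidivisibility of $S$, I would dispose of the degenerate case in which $y$ or $z$ equals the local identity $1_{\init(x)}$, which is the identity of the monoid $\rstab(x)$: then one of $y,z$ is the largest element of $\rstab(x)$ for the $\leq_{\green{L}(\rstab(x))}$-order, and comparability is immediate. Otherwise $x$, $y$ and $z$ are all edges of $S$, and equidivisibility applied to $x\cdot y=x\cdot z$ provides an edge $t\in S^I$ such that either $xt=x$ and $y=tz$, or $xt=x$ and $z=ty$ (the two alternatives being obtained by exchanging the roles of $y$ and $z$).

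The heart of the argument is the observation that this $t$ automatically lies in $\rstab(x)$: the relation $xt=x$ is exactly membership in $\rstab(x)$, and it also forces $\init(t)=\term(t)=\init(x)$, so $t$ is a loop at $\init(x)$ and the products $tz$ and $ty$ are computed inside the local monoid $S^I(\init(x))$. Hence in the first alternative $y=tz$ with $t\in\rstab(x)$ means precisely $y\leq_{\green{L}(\rstab(x))}z$, and in the second alternative $z\leq_{\green{L}(\rstab(x))}y$. As $y$ and $z$ were arbitrary, this shows that $\rstab(x)$ is an internal $\green{L}$-chain.

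The only delicate point, and the closest thing to an obstacle, is the bookkeeping of the adjoined local identities of $S^I$: one should check that the edge $t$ returned by equidivisibility, whether it is an edge of $S$ or a local identity, genuinely belongs to $\rstab(x)$, and that $\leq_{\green{L}}$ in the conclusion is read inside the monoid $\rstab(x)$ rather than in $S$. Both are immediate once one records that $1_{\init(x)}\in\rstab(x)$ and that $xt=x$ determines the adjacency of $t$.
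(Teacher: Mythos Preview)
Your proof is correct and follows essentially the same approach as the paper's: apply equidivisibility to the equality $xy=xz$ to obtain $t\in S^I$ with $xt=x$ (hence $t\in\rstab(x)$) and either $y=tz$ or $z=ty$, giving internal $\green{L}$-comparability. Your treatment is in fact slightly more careful than the paper's, since you explicitly dispose of the case where $y$ or $z$ is the adjoined local identity $1_{\init(x)}$ before invoking equidivisibility (which, as stated, requires all four factors to be edges of~$S$); the paper's proof silently absorbs this triviality.
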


\begin{proof}
  Let $T=\rstab(x)$ and let $y,z\in T$. As $xy=xz$ and $S$ is equidivisible, there is $t\in S$ such that
  either $xt=x$ and $z=ty$, or $xt=x$ and $y=tz$. In particular, $t\in T$ and either $z\leq_{\green{L}(T)}y$ or $y\leq_{\green{L}(T)}z$.
\end{proof}

Our focus is on stabilizers in equidivisible relatively free profinite semigroups and semigroupoids.

\begin{theorem}
  \label{t:right-stabilizers-L-chain}
  Let $\pv V$ be an equidivisible pseudovariety of semigroups
  and $A$ be an arbitrary alphabet. Let $x$ be a pseudoword of $\Om AV$.
  Then the monoid $\rstab(x)$ is an internal $\green{L}$-chain and its internally regular elements are idempotents.
\end{theorem}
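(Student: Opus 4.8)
The plan is to prove the two assertions separately, after first noting that $\Om AV$ is equidivisible for \emph{arbitrary} $A$. Since $\pv V$ is an equidivisible pseudovariety, $\Om BV$ is equidivisible for every finite alphabet $B$, and a finite-index graph equivalence $\theta$ on the one-vertex graph $A$ yields a finite alphabet $A/{\theta}$; so by Corollary~\ref{c:inverse-limit-finite-retracts} the semigroup $\Om AV$ is a quotient inverse limit of the equidivisible semigroups $\Om {A/{\theta}}V$, and Lemma~\ref{p:inverse-limit-of-equidivisible-semigroupoids} yields that $\Om AV$ is equidivisible. The first assertion is then immediate: Lemma~\ref{l:internal-l-chain}, applied to the equidivisible semigroup $\Om AV$ and the pseudoword $x$, says precisely that $\rstab(x)$ is an internal $\green{L}$-chain.

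For the second assertion, write $T=\rstab(x)$ and let $y\in T$ be internally regular, say $y=yzy$ with $z\in T$; if $z=1$ this reads $y=y^2$, so assume $z\in\Om AV$. Put $e=zy$ and $f=yz$; these are idempotents of $T$, and one checks at once that $xe=xf=x$, that $ye=fy=y$, and that $e\green{L}(T)y$ and $f\green{R}(T)y$. First I record the elementary reduction: if $ey=e$ then $y=ye=y(ey)=(ye)y=y^2$, so $y$ is idempotent. Next, $\Om AV$ is compact, hence stable, and therefore so is its closed subsemigroup $T$; combining stability with the internal $\green{L}$-chain property one checks that in fact $f\green{L}(T)y$, whence $y=gy^2$ for some $g\in T^I$ and so $y\green{L}(T)y^2$, and then, again by stability, also $y\green{R}(T)y^2$. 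Thus $y$ belongs to the maximal subgroup $H$ of $T$ whose identity is $f$, and $y$ is idempotent if and only if $H$ is trivial. Everything therefore reduces to showing that $\rstab(x)$ contains no non-trivial maximal subgroup.

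This last step is the crux and the main obstacle. Applying equidivisibility of $\Om AV$ once to the identity $xy=xf$ (both sides equal $x$) achieves nothing, since the witness it supplies can always be taken equal to $y$, so that no contradiction with $y\neq f$ ensues. Instead the plan is to feed the witnesses produced by equidivisibility into further applications of equidivisibility, obtaining nets whose cluster points --- which exist by compactness of $\Om AV$ --- lie in $\rstab(x)$ and are $\leq_{\green{L}(T)}$-comparable with both $f$ and $y$, and to iterate this so as to force $y=f$, contradicting the non-triviality of $H$. It is this interplay between equidivisibility, compactness, and the $\green{L}$-chain structure that does the real work; the remainder is bookkeeping.
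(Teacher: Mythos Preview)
Your first two paragraphs are fine: the reduction showing $\Om AV$ is equidivisible for arbitrary $A$ is correct (and in fact the paper simply cites Theorem~\ref{t:equidivisible-pseudovarieties} for this), the $\green L$-chain assertion follows from Lemma~\ref{l:internal-l-chain}, and your reduction of the second assertion to the triviality of maximal subgroups of $\rstab(x)$ via stability is sound.

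The gap is your third paragraph. You yourself call it ``the crux and the main obstacle'', and then offer only a plan: iterate equidivisibility, take cluster points, exploit the $\green L$-chain order. None of this is carried out, and the closing claim that ``the remainder is bookkeeping'' is unsupported. Equidivisibility applied to $xy=xf$ with $y,f$ in a common group $\green H$-class of $T$ produces witnesses that are themselves absorbed by that group structure; there is no indication of which auxiliary equalities you intend to equidivide, what invariant the iteration improves, or why the process terminates in $y=f$. As written this is a hope, not an argument.

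The paper does not attempt a direct argument here. It splits into two cases: for $\pv V\subseteq\pv{CS}$ it cites a short external lemma, and for $\pv V\not\subseteq\pv{CS}$ it uses Theorem~\ref{t:equidivisible-pseudovarieties} to get $\pv V=\pv{LI}\malcev\pv V\supseteq\pv N$, invokes Proposition~\ref{p:letter-super-cancelativity-semigroups} to obtain finite cancellability of $\Om AV$, and then cites \cite[Theorem~9.1]{Almeida&ACosta&Costa&Zeitoun:2019}, which establishes the idempotency of internally regular stabilizer elements for \emph{finitely generated} profinite equidivisible semigroups that are finitely cancelable. The passage to arbitrary $A$ is then a one-line projection argument via Corollary~\ref{c:separation-of-points}. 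The point is that the hard step genuinely uses finite cancellability (or some comparable extra structure of $\Om AV$), a tool you have not invoked; equidivisibility and compactness alone, as you propose to use them, have not been shown to suffice.
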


\begin{proof}
  By Theorem~\ref{t:equidivisible-pseudovarieties}, the semigroup $\Om AV$ is equidivisible.
  Hence, by Lemma~\ref{l:internal-l-chain}, the monoid $\rstab(x)$ is an internal $\green{L}$-chain.
  Let $y$ be an internally regular element of $\rstab(x)$. We wish to show that $y$ is idempotent. For the simple case where $\pv V$ is contained in $\pv{CS}$, see~\cite[Lemma 9.2]{Almeida&ACosta&Costa&Zeitoun:2019}.

  Assume that $\pv V$ is not contained in $\pv{CS}$. By Theorem~\ref{t:equidivisible-pseudovarieties}, we must have $\pv V=\pv{LI}\malcev\pv V$.
  By Proposition~\ref{p:letter-super-cancelativity-semigroups}, $\Om AV$ is finitely cancelable.
  In~\cite{Almeida&ACosta&Costa&Zeitoun:2019}, the definition of ``finitely cancelable'' is extended in a natural way
  to all finitely generated compact semigroups, subsuming the case of
  relatively free profinite semigroups over finite sets.
  It is shown in~\cite[Theorem 9.1]{Almeida&ACosta&Costa&Zeitoun:2019} that
  if $S$ is a finitely generated profinite equidivisible semigroup that is finitely cancelable, and $s\in S$, then every internally regular element of $\rstab(s)$
  must be idempotent. This settles the theorem in the case where $A$ is finite.

  Assume next that $A$ is an arbitrary alphabet, not necessarily finite. 
  Suppose that $y\neq y^2$.
  By Corollary~\ref{c:separation-of-points}
  there is a finite alphabet $B$ and a continuous onto homomorphism $\eta\from\Om AV\to \Om BV$
  such that $\eta(y)\neq\eta(y^2)$. On the other hand, we clearly have $\eta(\rstab(x))\subseteq \rstab(\eta(x))$.
  In particular, $\eta(y)$ is an internally regular element of $\rstab(\eta(x))$. From the established case of finite alphabets, it follows that $\eta(y)=\eta(y)^2$, a contradiction.
  To avoid the contradiction, we must have $y=y^2$.
\end{proof}

 \begin{corollary}
   \label{c:category-right-stabilizers-L-chain}
   Let $\pv V$ be an equidivisible pseudovariety of semigroups
   and $A$ be a finite-vertex graph. Let $x$ be a pseudopath of $\Om A{\glob V}$.
   Then the internally regular elements of $\rstab(x)$ are idempotent. If moreover~$\pv V$ contains $B_2$, then $\rstab(x)$
   is an internal $\green{L}$-chain.
 \end{corollary}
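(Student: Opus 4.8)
The plan is to establish the two assertions independently. For the statement on internally regular elements --- which does not use the hypothesis $B_2\in\pv V$ --- the idea is to transport $\rstab(x)$ into a stabilizer inside a free pro-$\pv V$ \emph{semigroup}, where Theorem~\ref{t:right-stabilizers-L-chain} applies, by means of the faithful homomorphism $\gamma=\gamma_A^{\pv V}\from\Om A{\glob V}\to\Om {E(A)}V$. For the statement that $\rstab(x)$ is an internal $\green L$-chain --- which does use $B_2\in\pv V$ --- the idea is simply to recognize that $\Om A{\glob V}$ is itself equidivisible and then invoke Lemma~\ref{l:internal-l-chain}.

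First I would carry out the reduction via $\gamma$. By Proposition~\ref{p:faithfulness}, $\gamma$ is faithful, so its extension $\gamma^I$ is injective on every hom-set of $(\Om A{\glob V})^I$: on a local set this is faithfulness of $\gamma$, and the only point to add is that the adjoined local identity $1_{\init(x)}$, which $\gamma^I$ sends to the adjoined identity of $(\Om {E(A)}V)^I$, cannot be the $\gamma^I$-image of an edge, since such an image lies in $\Om {E(A)}V$, which does not contain that adjoined identity. As $\rstab(x)$ is contained in the single hom-set $(\Om A{\glob V})^I(\init(x),\init(x))$, the map $\gamma^I$ is injective on $\rstab(x)$, and, being a homomorphism, it sends $\rstab(x)$ into $\rstab(\gamma(x))$, where $\gamma(x)$ is a pseudoword of the free pro-$\pv V$ semigroup over the (possibly infinite) alphabet $E(A)$.

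Next I would transport internal regularity. If $y\in\rstab(x)$ satisfies $y\in y\,\rstab(x)\,y$, then $\gamma^I(y)\in\gamma^I(y)\,\gamma^I(\rstab(x))\,\gamma^I(y)\subseteq\gamma^I(y)\,\rstab(\gamma(x))\,\gamma^I(y)$, so $\gamma^I(y)$ is internally regular in $\rstab(\gamma(x))$. Applying Theorem~\ref{t:right-stabilizers-L-chain} to the equidivisible pseudovariety of semigroups $\pv V$, the alphabet $E(A)$ and the pseudoword $\gamma(x)$, we obtain that $\gamma^I(y)$ is idempotent, i.e., $\gamma^I(y^2)=\gamma^I(y)$; injectivity of $\gamma^I$ on $\rstab(x)$ then forces $y^2=y$. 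For the second assertion, assuming $B_2\in\pv V$: since $\pv V$ is equidivisible, Proposition~\ref{p:equidivible-V-versus-global-V} gives that $\pv{\glob V}$ is equidivisible, and Proposition~\ref{p:equidivisible-pseudovarieties-of-semigroupoids} then gives that $\Om A{\glob V}$ is equidivisible because $A$ is a finite-vertex graph; Lemma~\ref{l:internal-l-chain}, applied to $\Om A{\glob V}$ and the edge $x$, yields at once that $\rstab(x)$ is an internal $\green L$-chain.

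The main obstacle I anticipate is purely bookkeeping in the first two paragraphs: being careful that $\gamma^I$ is injective on all of $\rstab(x)$, adjoined identities included, and that internal regularity passes along $\gamma^I$ using only the inclusion $\gamma^I(\rstab(x))\subseteq\rstab(\gamma(x))$ --- which is enough, since the witness of internal regularity of $\gamma^I(y)$ may be taken in the possibly larger monoid $\rstab(\gamma(x))$. Beyond that, the argument is a direct combination of Proposition~\ref{p:faithfulness} with Theorem~\ref{t:right-stabilizers-L-chain}, and of Propositions~\ref{p:equidivible-V-versus-global-V} and~\ref{p:equidivisible-pseudovarieties-of-semigroupoids} with Lemma~\ref{l:internal-l-chain}; in particular, no use of open multiplication or of Proposition~\ref{p:faithful-open-case-B2} is needed.
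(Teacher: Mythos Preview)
Your proposal is correct and follows essentially the same route as the paper: transport to $\Om{E(A)}V$ via the faithful map $\gamma$ and invoke Theorem~\ref{t:right-stabilizers-L-chain} for the first part, then combine Proposition~\ref{p:equidivible-V-versus-global-V} with Lemma~\ref{l:internal-l-chain} for the second. Your explicit invocation of Proposition~\ref{p:equidivisible-pseudovarieties-of-semigroupoids} (to pass from the definition of equidivisible pseudovariety, stated for finite graphs, to the finite-vertex graph $A$) and your careful handling of $\gamma^I$ on the adjoined identity are details the paper leaves implicit, but the argument is the same.
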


 \begin{proof}
   Consider the natural
   mapping $\gamma\from\Om A {\glob V}\to \Om {E(A)}V$.
   Let $y$ be an internally regular element of $\rstab(x)$.
   Then $\gamma(y)$ is an internally regular element of $\rstab(\gamma(x))$. It then follows from Theorem~\ref{t:right-stabilizers-L-chain}
   that $\gamma(y)=\gamma(y^2)$. As $\gamma$ is faithful (Proposition~\ref{p:faithfulness}), this establishes that $y$ is idempotent.
   If moreover $\pv V$ contains $B_2$, then by Proposition~\ref{p:equidivible-V-versus-global-V} the pseudovariety $\pv{\glob V}$ is equidivisible,
   and so $\rstab(x)$ is an internal $\green{L}$-chain by Lemma~\ref{l:internal-l-chain}.
 \end{proof}

 Next, we make use of the following classical notion: the \emph{kernel} of a semigroup $S$ is the minimal nonempty two-sided ideal, provided such an ideal exists. When it exists, the kernel consists of internally regular elements. It is well known that every compact semigroup has a kernel (cf.~\cite[Corollary 3.1.15]{Rhodes&Steinberg:2009qt}).
 \begin{corollary}
    \label{c:right-stabilizer-kernel}
    Let $A$ be a finite-vertex graph and\/ $\pv V$ be an
    equidivisible pseudovariety of semigroups. Let $x$ be a pseudopath
    of $\Om A{\glob V}$. Then the kernel of $\rstab(x)$ is a left-zero semigroup.
 \end{corollary}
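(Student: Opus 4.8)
The plan is to identify the kernel $K$ of the compact monoid $\rstab(x)$, show that $K$ is a rectangular band using Corollary~\ref{c:category-right-stabilizers-L-chain}, and then strengthen ``rectangular band'' to ``left-zero semigroup'' by proving that $K$ is an internal $\green{L}$-chain; the $\green{L}$-chain property itself will be harvested from $\rstab(\gamma(x))$ through the faithful homomorphism $\gamma$.

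First I would note that $\rstab(x)$ is a closed submonoid of a (compact) local monoid of $(\Om A{\glob V})^I$, hence a compact monoid, so it has a kernel $K$ (cf.~\cite[Corollary~3.1.15]{Rhodes&Steinberg:2009qt}), which is a completely simple semigroup. Every element of $K$ is regular in $K$, hence internally regular in $\rstab(x)$ (since $K\subseteq\rstab(x)$); by Corollary~\ref{c:category-right-stabilizers-L-chain} it is therefore idempotent. A completely simple semigroup all of whose elements are idempotent is a rectangular band, so $K$ is a rectangular band. In a rectangular band, two elements are $\leq_{\green{L}}$-comparable precisely when they are $\green{L}$-equivalent, and the $\green{L}$-classes are its ``columns''; hence it suffices to show that $K$ is an internal $\green{L}$-chain, for this forces $K$ to consist of a single $\green{L}$-class, i.e.\ to be a left-zero semigroup, which is the assertion.

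To obtain the internal $\green{L}$-chain property I would transport the problem along $\gamma\from\Om A{\glob V}\to\Om {E(A)}V$. This map is faithful (Proposition~\ref{p:faithfulness}), hence injective on each local monoid, and it sends $\rstab(x)$ into $\rstab(\gamma(x))$; since $\pv V$ is equidivisible, Theorem~\ref{t:right-stabilizers-L-chain} applied to the pseudoword $\gamma(x)$ over the alphabet $E(A)$ shows that $\rstab(\gamma(x))$ is an internal $\green{L}$-chain. Given $y,z\in K$, the images $\gamma(y),\gamma(z)$ are thus $\leq_{\green{L}}$-comparable in $\rstab(\gamma(x))$. In the nontrivial case, say $\gamma(y)=t\,\gamma(z)$ with $t\in\rstab(\gamma(x))$, I would use that $\gamma(z)$ is idempotent (since $z\in K$ is) to rewrite $\gamma(y)=\gamma(y)\gamma(z)=\gamma(yz)$, then invoke injectivity of $\gamma$ to conclude $y=yz$; as $K$ is an ideal of $\rstab(x)$ this gives $y\in Kz$, so $y\leq_{\green{L}(K)}z$. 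Hence $K$ is an internal $\green{L}$-chain, completing the proof.

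The step I expect to be the crux is this last descent: internal $\green{L}$-chain-ness of the ambient monoid $\rstab(\gamma(x))$ does not pass to an arbitrary subsemigroup, so one cannot simply restrict to $K$; what makes the $\leq_{\green{L}}$-relation descend into $K$ is the idempotency of every element of $K$, furnished by Corollary~\ref{c:category-right-stabilizers-L-chain}. The reason for routing through $\gamma$ rather than arguing inside $\Om A{\glob V}$ directly is that $\Om A{\glob V}$ need not be equidivisible when $B_2\notin\pv V$, so $\rstab(x)$ itself need not be an internal $\green{L}$-chain, whereas $\Om {E(A)}V$ is equidivisible and Theorem~\ref{t:right-stabilizers-L-chain} applies there unconditionally. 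A routine point to record is the standard fact that the kernel of a compact semigroup exists and is completely simple.
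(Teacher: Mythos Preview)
Your proof is correct and follows the same skeleton as the paper's: the kernel $K$ of the compact monoid $\rstab(x)$ exists and is completely simple; its elements are internally regular in $\rstab(x)$, hence idempotent by Corollary~\ref{c:category-right-stabilizers-L-chain}, so $K$ is a (rectangular) band; and then one argues that $K$ is an internal $\green{L}$-chain, which forces a rectangular band to be left-zero (equivalently, $\green R$-trivial, as the paper phrases it).

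The one substantive difference is in how the internal $\green{L}$-chain property of $K$ is obtained. The paper's proof simply cites Corollary~\ref{c:category-right-stabilizers-L-chain}, but the $\green{L}$-chain clause of that corollary is stated only under the extra hypothesis $B_2\in\pv V$, which is not assumed here. Your argument routes through the faithful homomorphism $\gamma$ to $\rstab(\gamma(x))\subseteq(\Om{E(A)}V)^I$, where Theorem~\ref{t:right-stabilizers-L-chain} gives the internal $\green{L}$-chain property unconditionally, and then uses idempotency of $\gamma(z)$ to descend the relation $\gamma(y)\leq_{\green L}\gamma(z)$ to $y=yz$ via faithfulness. This extra care genuinely matters: $\pv{\loc I}$ is equidivisible but does not contain $B_2$, so the paper's shortcut does not literally apply to all equidivisible $\pv V$, whereas your argument does. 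In the paper's downstream applications (Theorem~\ref{t:net-characterization-of-L-minimal-right-stabilizers}) $\pv V$ is concatenation-closed, hence contains $\pv A\ni B_2$, so the gap is harmless there; but at the level of generality stated in Corollary~\ref{c:right-stabilizer-kernel}, your version is the more complete one.
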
 

 \begin{proof}
   Since the kernel $K$ of $\rstab(x)$ exists and consists of internally regular elements, it follows from Corollary~\ref{c:category-right-stabilizers-L-chain} that $K$ is a band and an internal $\green{L}$-chain.
   Since the kernel of a profinite semigroup is completely simple, we conclude
   that $K$ is in fact an $\mathcal{R}$-trivial band.
 \end{proof}

\begin{remark}
  \label{r:stabilizers-rhodes-steinberg}
  Building on the work of Elston about expansions of semigroups~\cite{Elston:1999}, Rhodes and Steinberg also obtained sufficient conditions for a pseudovariety of semigroups $\pv V$ to satisfy the property that, for every finite alphabet $A$ and every $x\in\Om AV$,
  the semigroup $\rstab(x)$ is an $\green{L}$-chain of $\Om AV$ (a condition weaker than being an internal $\green{L}$-chain) whose internally regular elements are idempotent~\cite[Theorem 13.1]{Rhodes&Steinberg:2001}.
  These sufficient conditions hold in particular when $\pv V = \pv S$ or $\pv V = \pv A$~\cite[Corollary 13.2]{Rhodes&Steinberg:2001}.
  The approach of Rhodes and Steinberg is rather sophisticated and
  different from the one we used to deduce Theorem~\ref{t:right-stabilizers-L-chain}.
  In the same vein, they also gave sufficient conditions on a profinite monoid $M$, satisfied in particular for free profinite monoids, guaranteeing that for all $x\in M$, the stabilizer $\rstab(x)$ is an $\green{R}$-trivial band~\cite[Corollaries 14.4 and 14.5]{Rhodes&Steinberg:2001}.
\end{remark}

 The next theorem is the main result of the section. It plays a crucial role in our companion paper \cite{Almeida&ACosta&Goulet-Ouellet:2024b}. 

\begin{theorem}
  \label{t:net-characterization-of-L-minimal-right-stabilizers}
  Let $A$ be a finite-vertex graph and\/  $\pv V$ be a concatenation-closed pseudovariety of semigroups.
  Let $x$ be a prefix accessible pseudopath of $\Om A{\glob V}$. Then
  a pseudopath $y$ of $\Om A{\glob V}$ belongs to the kernel of\/ $\rstab(x)$
  if and only if there is a net $(x_i)_{i\in I}$ of finite-length prefixes of $x$ such that
  $x_i\to x$ and $x_i^{-1}x\to y$.
\end{theorem}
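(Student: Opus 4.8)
The plan is to prove the equality $K=L$, where $K$ denotes the kernel of $\rstab(x)$ and $L$ denotes the set of pseudopaths $y$ for which there exists a net $(x_i)_{i\in I}$ of finite-length prefixes of $x$ with $x_i\to x$ and $x_i^{-1}x\to y$. First I would collect what concatenation-closedness provides. Since $\pv V$ is concatenation-closed, $\pv A\subseteq\pv V$ by Theorem~\ref{t:concatenation-malcev}, so in particular $\pv N\subseteq\pv K\subseteq\pv V$ and $B_2\in\pv V$; moreover $\glob\pv V$ is concatenation-closed by Corollary~\ref{c:sgpd-concatenation-closed-pseudovarieties}, hence equidivisible by Corollary~\ref{c:concatenation-closed-implies-equidivisible-semigroupoids}, hence finitely cancelable by Proposition~\ref{p:letter-super-cancelativity-semigroupoids}, and every convergent net in $\Om A{\glob V}$ is fully factorizable by Corollary~\ref{c:open-mult-free-profinite}. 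Consequently $\rstab(x)$ is an internal $\green{L}$-chain (Lemma~\ref{l:internal-l-chain}, using that $\Om A{\glob V}$ is equidivisible); being a closed submonoid of a compact monoid it is itself compact, so its kernel $K$ is nonempty, and $K$ is a left-zero semigroup by Corollary~\ref{c:right-stabilizer-kernel}. Finally, since $x$ is prefix accessible, we may fix a right-infinite path $w$ over $A$ with $x\in\dex{\glob V}(w)$; then $x$ is a cluster point of the net $(w[0,n))_{n}$, and each $w[0,n)$ is a finite-length prefix of $x$ because $\leq_{\green{R}}$ is closed and $w[0,n)\leq_{\green{R}}w[0,m)$ whenever $m\geq n$. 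By left finite cancelability, each expression $x_i^{-1}x$ appearing above is well defined.

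To prove $L\subseteq K$, let $y=\lim_{i}z_i$ where $z_i=x_i^{-1}x$ for a net $(x_i)$ of finite-length prefixes of $x$ with $x_i\to x$. The composable pairs $(x_i,z_i)$ converge to $(x,y)$, which again lies in $D(\Om A{\glob V})$ because that set is closed; continuity of multiplication then gives $x=x_iz_i\to xy$, so $xy=x$ and $y\in\rstab(x)$. The decisive point is that $z_iz=z_i$ for \emph{every} $z\in\rstab(x)$: indeed $x_i(z_iz)=(x_iz_i)z=xz=x=x_iz_i$, and since $x_i$ is a finite-length prefix it may be cancelled on the left. Passing to the limit yields $yz=y$ for every $z\in\rstab(x)$. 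Choosing any $e\in K$, we get $y=ye\in\rstab(x)\,K\subseteq K$, since $K$ is an ideal of $\rstab(x)$; hence $y\in K$.

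To prove $K\subseteq L$, fix $y\in K$, so $xy=x$. Choose a subnet $(q_k)$ of $(w[0,n))_{n}$ with $q_k\to x$, and apply full factorizability of the net $(q_k)$ to the factorization $\lim q_k=xy$: we obtain a subnet $(q_{k_l})$ and a net $(u_l,v_l)$ in $D(\Om A{\glob V})$ converging to $(x,y)$ with $q_{k_l}=u_lv_l$. As $A^+$ is factorial in $\Om A{\glob V}$ (Corollary~\ref{c:A+-as-filter}), both $u_l$ and $v_l$ are finite paths, so $u_l$ is a finite-length prefix of $x$ and $u_l\to x$. It remains to compute $\lim u_l^{-1}x$. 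Write $r_l=q_{k_l}^{-1}x$; then $x=q_{k_l}r_l=u_lv_lr_l$, so cancelling $u_l$ on the left gives $u_l^{-1}x=v_lr_l$. Passing to a further subnet along which $r_l\to r$, the net $(q_{k_l})$ of finite-length prefixes of $x$ satisfies $q_{k_l}\to x$ and $q_{k_l}^{-1}x=r_l\to r$, so $r\in L$, hence $r\in K$ by the inclusion already proved. Since $y,r\in K$ and $K$ is a left-zero semigroup, $yr=y$; therefore $u_l^{-1}x=v_lr_l\to yr=y$, which exhibits $y\in L$.

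I expect the two cancellation-plus-limit moves to be the main obstacles: establishing $z_iz=z_i$ for arbitrary $z\in\rstab(x)$ in the first inclusion, and the identity $u_l^{-1}x=v_lr_l$ together with the observation $\lim r_l\in K$ in the second. Both rely on combining full factorizability of $\Om A{\glob V}$, left finite cancelability, and the left-zero structure of $K$. The remaining verifications are routine bookkeeping of sources and ranges to check that the various composable-pair conditions hold and that the relevant limits stay inside $D(\Om A{\glob V})$, which is unproblematic because $V(A)$ is finite, so $D(\Om A{\glob V})$ is clopen.
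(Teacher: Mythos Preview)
Your proof is correct and follows essentially the same route as the paper's: both directions hinge on left finite cancellation combined with the left-zero structure of the kernel (Corollary~\ref{c:right-stabilizer-kernel}), and for $K\subseteq L$ both apply full factorizability to a net of finite prefixes converging to $x=xy$, then recombine the factor $v_l$ with the tail $q_{k_l}^{-1}x$ to get $u_l^{-1}x\to yr=y$. One cosmetic slip: the inequality ``$w[0,n)\leq_{\green{R}}w[0,m)$ whenever $m\geq n$'' is reversed under the paper's convention (where $x\leq_{\green{R}}y$ means $y$ is a prefix of $x$), but the intended closure argument that each $w[0,n)$ is a prefix of $x$ is clear and correct.
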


\begin{proof}
  In what follows, one should bear in mind that
  $\pv V\supseteq \pv A\supseteq \pv N$ (cf.~Theorem~\ref{t:concatenation-closed-pseudovarieties}),
  that $\pv{\glob V}$ is concatenation-closed (cf.~Corollary~\ref{c:sgpd-concatenation-closed-pseudovarieties}),
  and that $\pv V$ and $\pv{\glob V}$ are both equidivisible pseudovarieties
  (cf.~Corollaries~\ref{c:concatenation-closed-implies-equidivisible} and~\ref{c:concatenation-closed-implies-equidivisible-semigroupoids}).

  Take a pseudopath $y\in\Om A{\glob V}$ for which there is a net
  $(x_i)_{i\in I}$ of finite-length prefixes of $x$ such that $x_{i}\to x$
  and $x_i^{-1}x\to y$. Let $y_i=x_i^{-1}x$. Since $x=x_iy_i$, taking
  limits we get $x=xy$, thus $y\in\rstab(x)$. Let $z\in \rstab(x)$.
  Since
  \begin{equation*}
    x_iy_i=x=xz=x_iy_iz,
  \end{equation*}
  canceling (by Proposition~\ref{p:letter-super-cancelativity-semigroupoids}) the finite-length prefix $x_i$ we obtain $y_i=y_iz$. Taking limits,
  we see that $y=yz$, and so $y$ indeed belongs to the kernel of $\rstab(x)$.
  
  Conversely, suppose that $y$ belongs to the kernel of $\rstab(x)$.
  Since $x$ is prefix accessible, we know that $\overrightarrow{x}$ is
  a right-infinite path over $A$ and that there is some net
  $(x_i)_{i\in I}$ of finite-length prefixes of $x$ such that
  $x_{i}\to x$. Set $z_i=x_i^{-1}x$, for each~$i\in I$. 
  Up to taking a subnet, we may assume that the net $(z_i)_{i\in I}$ converges to an element $z\in\Om A{\glob V}$.
  By the first part of the proof, $z$ belongs to the kernel of $\rstab(x)$.
  As $\lim x_i=x=xy$, it follows from
  Corollary~\ref{c:open-mult-free-profinite} that there are nets
  $(x_j')_{j\in J}$ and $(y_j')_{j\in J}$ in $\Om A{\glob V}$ such
  that $\lim x_j'=x$, $\lim y_j'=y$ and a subnet
  $(x_{i_j})_{j\in J}$ of $(x_i)_{i\in I}$ satisfying
  $x_{i_j}=x_j'y_j'$ for every $j\in J$. Note that for every
  $j\in J$, since $x_{i_j}$ is a prefix of the right-infinite
  path $\overrightarrow{x}$ over $A$, it follows from
  Corollary~\ref{c:A+-as-filter} that $x_j'$ and $y_j'$ are paths over
  $A$, with $x_j'$ being moreover a prefix of $x$.

  Consider the pseudopath $y_j=y_j'z_{i_j}$. Then we have
  \begin{equation*}
    x=x_{i_j}z_{i_j}=x_j'y_j'z_{i_j}=x_j'y_j,
  \end{equation*}
  whence $y_j=(x_j')^{-1}x$. 
  Moreover, we have
  \begin{equation*}
    \lim_{j\in J} y_j = \lim_{j\in J} y'_j z_{i_j} = yz = y,
  \end{equation*}
  since $y$ and $z$ both belong to the kernel of $\rstab(x)$, which is a left-zero semigroup by Corollary~\ref{c:right-stabilizer-kernel}.
  Since $(x_j')_{j\in J}$ is a net of
  finite-length prefixes of $x$ converging to $x$, this concludes the proof of the theorem.
\end{proof}

In light of Remark~\ref{r:semigroupoids-vs-categories}, we immediately extract from
Corollary~\ref{c:right-stabilizer-kernel} and
Theorem~\ref{t:net-characterization-of-L-minimal-right-stabilizers} the following corollaries. These results are used in our related work~\cite{Almeida&ACosta&Goulet-Ouellet:2024b}.

\begin{corollary}
    \label{c:CAT-category-right-stabilizers-L-chain}
    Let $A$ be a finite-vertex graph. Let $x$ be a pseudopath of $\Om A{Cat}$. Then the kernel of $\rstab(x)$ is a left-zero semigroup.
\end{corollary}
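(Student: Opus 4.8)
The plan is to deduce the statement from Corollary~\ref{c:right-stabilizer-kernel} applied to the pseudovariety $\pv S$ of all finite semigroups, using the dictionary between categories and semigroupoids. The starting observation is Remark~\ref{r:semigroupoids-vs-categories}, by which $\Om A{Cat}=(\Om A{Sd})^I$; since $\pv S$ is local, i.e.\ $\glob\pv S=\pv{Sd}$, this reads $\Om A{Cat}=(\Om A{\glob S})^I$. I would also record at the outset that $\pv S$ is equidivisible: it is not contained in $\pv{CS}$ and $\pv{LI}\malcev\pv S=\pv S$ holds trivially, so Theorem~\ref{t:equidivisible-pseudovarieties} applies (equivalently, $\pv S$ is concatenation-closed since $\pv A\malcev\pv S=\pv S$, hence equidivisible by Corollary~\ref{c:concatenation-closed-implies-equidivisible}).

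Next I would split into two cases according to whether the given pseudopath $x$ of $\Om A{Cat}$ is one of the adjoined local identities $1_v$ or an edge of $\Om A{Sd}$. In the first case, any $y$ with $1_vy=1_v$ must satisfy $\term(y)=v$, whence $1_vy=y$ and therefore $y=1_v$; thus $\rstab(1_v)=\{1_v\}$, whose kernel is the one-element, hence trivially left-zero, semigroup $\{1_v\}$. In the second case, the monoid used in the definition of the right stabilizer of $x$ is $(\Om A{Sd})^I=\Om A{Cat}$, so $\rstab(x)$ computed in $\Om A{Cat}$ is literally the same set as $\rstab(x)$ computed in $\Om A{Sd}=\Om A{\glob S}$; applying Corollary~\ref{c:right-stabilizer-kernel} to the equidivisible pseudovariety $\pv V=\pv S$ then gives directly that the kernel of $\rstab(x)$ is a left-zero semigroup.

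I do not expect any substantive obstacle: the real content was already delivered by Corollaries~\ref{c:category-right-stabilizers-L-chain} and~\ref{c:right-stabilizer-kernel}, and this corollary is purely a translation. The only points demanding a little care are the identification $\Om A{\glob S}=\Om A{Sd}$ (locality of $\pv S$, which I would cite rather than prove) and the remark that adjoining local identities creates no stabilizers beyond the trivial one handled above. Should one wish to bypass the locality of $\pv S$, an alternative would be to rerun the arguments of Corollaries~\ref{c:category-right-stabilizers-L-chain} and~\ref{c:right-stabilizer-kernel} inside $\Om A{Sd}$ itself, using that $\pv{Sd}$ is concatenation-closed---hence equidivisible by Corollary~\ref{c:concatenation-closed-implies-equidivisible-semigroupoids}, so that Lemma~\ref{l:internal-l-chain} makes $\rstab(x)$ an internal $\green L$-chain---together with the faithfulness of $\gamma$ to import the ``internally regular implies idempotent'' conclusion from a free profinite semigroup; but routing through $\pv V=\pv S$ is shorter.
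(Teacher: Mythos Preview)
Your approach is essentially the paper's: the paper simply says the corollary is ``immediately extracted'' from Corollary~\ref{c:right-stabilizer-kernel} via Remark~\ref{r:semigroupoids-vs-categories}, and your write-up supplies the details (locality of $\pv S$, the case split on adjoined identities) that the paper leaves implicit.

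One small inaccuracy worth flagging: when $x\in E(\Om A{Sd})$, the stabilizer computed in $\Om A{Cat}$ is not \emph{literally} the same set as the one computed in $\Om A{Sd}$. By the paper's definition, $\rstab(x)$ for an edge of a semigroupoid $S$ lives in $S^I$; taking $S=\Om A{Cat}=(\Om A{Sd})^I$ adjoins a \emph{second} local identity $1'_{\init(x)}$, so that $\rstab_{\Om A{Cat}}(x)=\rstab_{\Om A{Sd}}(x)\cup\{1'_{\init(x)}\}$. This does not affect the conclusion: $\rstab_{\Om A{Sd}}(x)$ already contains the first identity $1_{\init(x)}$, hence is a nonempty ideal of $\rstab_{\Om A{Cat}}(x)$, and the two monoids therefore share the same kernel.
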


\begin{corollary}
  \label{c:net-characterization-of-L-minimal-right-stabilizers}
  Let $A$ be a finite-vertex graph.
  Let $x$ be a prefix accessible pseudopath of~$\Om A{Cat}$. Then
  a pseudopath $y$ of $\Om A{Cat}$ belongs to the kernel of\/ $\rstab(x)$
  if and only if there is a net $(x_i)_{i\in I}$ of finite-length prefixes of $x$ such that
  $x_i\to x$ and $x_i^{-1}x\to y$.
\end{corollary}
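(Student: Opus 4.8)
The plan is to read off Corollary~\ref{c:net-characterization-of-L-minimal-right-stabilizers} from Theorem~\ref{t:net-characterization-of-L-minimal-right-stabilizers} by specializing to the pseudovariety $\pv S$ of all finite semigroups, in the same way that Corollary~\ref{c:CAT-category-right-stabilizers-L-chain} is extracted from Corollary~\ref{c:right-stabilizer-kernel}. First I would check that $\pv S$ is concatenation-closed: since $\pv S$ contains every finite semigroup we have $\pv A\malcev\pv S=\pv S$, so Theorem~\ref{t:concatenation-malcev} applies. (Alternatively, $\pv S$-recognizability is plain recognizability, and the recognizable languages over a finite alphabet are closed under concatenation.) Thus Theorem~\ref{t:net-characterization-of-L-minimal-right-stabilizers} is available with $\pv V=\pv S$ and supplies the asserted net characterization inside $\Om A{\glob S}$.

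Next I would identify the ambient object. By Remark~\ref{r:semigroupoids-vs-categories} we have $\Om A{Cat}=(\Om A{Sd})^I$, and $\pv S$ is local, that is $\glob\pv S=\pv{Sd}$: indeed $\glob\pv S\subseteq\ell\pv S=\pv{Sd}$ always, while every finite semigroupoid $T$ divides the finite semigroup whose set of elements is $E(T)$ with a fresh zero adjoined, the product of two edges being their composite in $T$ when this is defined and the zero otherwise — here one uses that the faithful homomorphism in the definition of a divisor may collapse vertices — and $\glob\pv S$ is closed under divisors. Hence $\Om A{\glob S}=\Om A{Sd}$, so the passage from $\Om A{Sd}$ to $\Om A{Cat}$ amounts only to the adjunction of the local identities $1_v$.

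Finally I would verify that this change does not affect the statement. A prefix accessible pseudopath $x$ has infinite length, hence is a genuine edge of $\Om A{Sd}$ rather than a local identity; the notions of finite-length prefix of $x$ and of the quotient $x_i^{-1}x$ (available because $\pv{Sd}$, being concatenation-closed, is equidivisible and therefore finitely cancelable by Proposition~\ref{p:letter-super-cancelativity-semigroupoids}) are unchanged; and the kernel of $\rstab(x)$ is a left-zero semigroup by Corollary~\ref{c:right-stabilizer-kernel}, all of whose elements have infinite length — this follows from the characterization just obtained in $\Om A{Sd}$ together with continuity of the length map — so no $1_v$ lies in it, and the kernel is the same whether read in $\Om A{Sd}$ or in $\Om A{Cat}$. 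The equivalence furnished by Theorem~\ref{t:net-characterization-of-L-minimal-right-stabilizers} for $\pv V=\pv S$ therefore transfers verbatim, which is the corollary. I do not expect a real obstacle here; the only point demanding a line of care is precisely this bookkeeping of the adjoined local identities.
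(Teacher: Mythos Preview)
Your proposal is correct and follows the same route as the paper, which simply says that the corollary is ``immediately extract[ed]'' from Theorem~\ref{t:net-characterization-of-L-minimal-right-stabilizers} and Corollary~\ref{c:right-stabilizer-kernel} in light of Remark~\ref{r:semigroupoids-vs-categories}. You have fleshed out the details the paper leaves implicit---checking that $\pv S$ is concatenation-closed, that $\glob\pv S=\pv{Sd}$, and that adjoining the local identities to pass from $\Om A{Sd}$ to $\Om A{Cat}$ leaves the kernel of $\rstab(x)$ unchanged---and your bookkeeping on the last point is sound (indeed, the simpler observation that adjoining a new identity to a monoid never alters its minimal ideal would also suffice there).
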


\section*{Acknowledgments}

The first author acknowledges partial support by CMUP (Centro de
Matemática da Universidade do Porto), member of LASI (Intelligent
Systems Associate Laboratory), which is financed by Portuguese funds
through FCT (Fundação para a Ciência e a Tecnologia, I. P.) under the
project UIDB/00144.

The second author acknowledges financial support from the
Centre for Mathematics of the University of Coimbra - UID/00324.

The third author was supported by the Czech Technical University Global Postdoc Fellowship program.

\inputencoding{latin1}
\bibliographystyle{abbrv} 
\bibliography{../biblio/sgpabb,../biblio/ref-sgps}

\begin{thebibliography}{10}

\bibitem{Almeida:2025}
J.~Almeida.
\newblock Pseudovarieties of semigroups.
\newblock {\em Asian-Eur. J. Math.}, 0(0):2540007, 0.

\bibitem{Almeida:1994a}
J.~Almeida.
\newblock {\em Finite Semigroups and Universal Algebra}.
\newblock World Scientific, Singapore, 1994.
\newblock {E}nglish translation.

\bibitem{Almeida:1996c}
J.~Almeida.
\newblock A syntactical proof of locality of {DA}.
\newblock {\em Int. J. Algebra Comput.}, 6:165--177, 1996.

\bibitem{Almeida:2003cshort}
J.~Almeida.
\newblock Profinite semigroups and applications.
\newblock In V.~B. Kudryavtsev and I.~G. Rosenberg, editors, {\em Structural
  theory of automata, semigroups and universal algebra}, pages 1--45, New York,
  2005. Springer.

\bibitem{Almeida&ACosta:2007a}
J.~Almeida and A.~Costa.
\newblock Infinite-vertex free profinite semigroupoids and symbolic dynamics.
\newblock {\em J. Pure Appl. Algebra}, 213:605--631, 2009.

\bibitem{Almeida&ACosta:2017}
J.~Almeida and A.~Costa.
\newblock Equidivisible pseudovarieties of semigroups.
\newblock {\em Publ. Math. Debrecen}, 90:435--453, 2017.

\bibitem{Almeida&ACosta:2015hb}
J.~Almeida and A.~Costa.
\newblock {\em Handbook of {A}uto{M}ath{A}}, volume I. Theoretical Foundations,
  chapter 17. Profinite topologies, pages 615--652.
\newblock European Math. Soc. Publ. House, 2021.

\bibitem{Almeida&ACosta:2023}
J.~Almeida and A.~Costa.
\newblock Equidivisibility and profinite coproduct.
\newblock {\em Quaest. Math.}, 46(11):2243--2275, 2023.

\bibitem{Almeida&ACosta&Costa&Zeitoun:2019}
J.~Almeida, A.~Costa, J.~C. Costa, and M.~Zeitoun.
\newblock The linear nature of pseudowords.
\newblock {\em Publ. Mat.}, 63:361--422, 2019.

\bibitem{Almeida&ACosta&Goulet-Ouellet:2024b}
J.~Almeida, A.~Costa, and H.~Goulet-{O}uellet.
\newblock Profinite approach to {S}-adic shift spaces {I}: Saturating directive
  sequences.
\newblock In preparation.

\bibitem{Almeida&ACosta&Kyriakoglou&Perrin:2020b}
J.~Almeida, A.~Costa, R.~Kyriakoglou, and D.~Perrin.
\newblock {\em Profinite semigroups and symbolic dynamics}, volume 2274 of {\em
  Lect. Notes in Math.}
\newblock Springer, Cham, 2020.

\bibitem{Almeida&Steinberg:2000a}
J.~Almeida and B.~Steinberg.
\newblock On the decidability of iterated semidirect products and applications
  to complexity.
\newblock {\em Proc. London Math. Soc.}, 80:50--74, 2000.

\bibitem{Almeida&Steinberg:2008}
J.~Almeida and B.~Steinberg.
\newblock Rational codes and free profinite monoids.
\newblock {\em J. London Math. Soc. (2)}, 79:465--477, 2009.

\bibitem{Almeida&Weil:1996}
J.~Almeida and P.~Weil.
\newblock Profinite categories and semidirect products.
\newblock {\em J. Pure Appl. Algebra}, 123:1--50, 1998.

\bibitem{Banaschewski:1983}
B.~Banaschewski.
\newblock The {B}irkhoff theorem for varieties of finite algebras.
\newblock {\em Algebra Universalis}, 17:360--368, 1983.

\bibitem{Chaubard&Pin&Straubing:2006}
L.~Chaubard, J.-E. Pin, and H.~Straubing.
\newblock Actions, wreath products of \protect{$\mathcal C$}-varieties and
  concatenation product.
\newblock {\em Theor. Comp. Sci.}, 356:73--89, 2006.

\bibitem{Clifford&Preston:1961}
A.~H. Clifford and G.~B. Preston.
\newblock {\em The Algebraic Theory of Semigroups}, volume~I.
\newblock Amer. Math. Soc., Providence, R.I., 1961.

\bibitem{ACosta&Steinberg:2011}
A.~Costa and B.~Steinberg.
\newblock Profinite groups associated to sofic shifts are free.
\newblock {\em Proc. London Math. Soc.}, 102:341--369, 2011.

\bibitem{Eilenberg:1976}
S.~Eilenberg.
\newblock {\em Automata, Languages and Machines}, volume~B.
\newblock Academic Press, New York, 1976.

\bibitem{Elston:1999}
G.~Z. Elston.
\newblock Semigroup expansions using the derived category, kernel, and {M}alcev
  products.
\newblock {\em J. Pure Appl. Algebra}, 136(3):231--265, 1999.

\bibitem{Engelking:1989}
R.~Engelking.
\newblock {\em General Topology}.
\newblock Number~6 in Sigma Series in Pure Mathematics. Heldermann Verlag
  Berlin, 1989.
\newblock Revised and completed edition.

\bibitem{Grillet:1995bk}
P.-A. Grillet.
\newblock {\em Semigroups, an introduction to the structure theory}, volume 193
  of {\em Monographs and Textbooks in Pure and Applied Mathematics}.
\newblock Marcel Dekker, Inc., New York, 1995.

\bibitem{Hart&Nagata&Vaughan:2004egt}
K.~P. Hart, J.~Nagata, and J.~E. Vaughan, editors.
\newblock {\em Encyclopedia of {G}eneral {T}opology}.
\newblock Elsevier Science Publishers, B.V., Amsterdam, 2004.

\bibitem{Henckell&Rhodes&Steinberg:2010b}
K.~Henckell, J.~Rhodes, and B.~Steinberg.
\newblock A profinite approach to stable pairs.
\newblock {\em Int. J. Algebra Comput.}, 20:269--285, 2010.

\bibitem{Howie:1976}
J.~M. Howie.
\newblock {\em An Introduction to Semigroup Theory}.
\newblock Academic Press, London, 1976.

\bibitem{Jones:1996}
P.~R. Jones.
\newblock Profinite categories, implicit operations and pseudovarieties of
  categories.
\newblock {\em J. Pure Appl. Algebra}, 109:61--95, 1996.

\bibitem{Kelley:1975}
J.~L. Kelley.
\newblock {\em General topology}, volume No. 27 of {\em Graduate Texts in
  Mathematics}.
\newblock Springer-Verlag, New York-Berlin, 1975.
\newblock Reprint of the 1955 edition [Van Nostrand, Toronto, Ont.].

\bibitem{Lallement:1979}
G.~Lallement.
\newblock {\em Semigroups and Combinatorial Applications}.
\newblock Wiley-Interscience. J. Wiley \& Sons, Inc., New York, 1979.

\bibitem{Margolis&Rhodes&Schilling:arXiv:2406.18477}
S.~Margolis, J.~Rhodes, and A.~Schilling.
\newblock Decidability of {Krohn}-{Rhodes} complexity for all finite semigroups
  and automata.
\newblock Preprint, {arXiv}:2406.18477 [math.{GR}] (2024), 2024.

\bibitem{Margolis&Rhodes&Schilling:arXiv:2501.00770v1}
S.~Margolis, J.~Rhodes, and A.~Schilling.
\newblock {Complexity of Finite Semigroups: History and Decidability}.
\newblock Preprint, {arXiv}:2501.00770v1 [math.{GR}] (2025), 2025.

\bibitem{McKnight&Storey:1969}
J.~D. McKnight, Jr. and A.~J. Storey.
\newblock Equidivisible semigroups.
\newblock {\em J. Algebra}, 12:24--48, 1969.

\bibitem{Pin:1986;bk}
J.-E. Pin.
\newblock {\em Varieties of Formal Languages}.
\newblock Plenum, London, 1986.
\newblock English translation.

\bibitem{Reiterman:1982}
J.~Reiterman.
\newblock The {B}irkhoff theorem for finite algebras.
\newblock {\em Algebra Universalis}, 14:1--10, 1982.

\bibitem{Rhodes&Steinberg:2001}
J.~Rhodes and B.~Steinberg.
\newblock Profinite semigroups, varieties, expansions and the structure of
  relatively free profinite semigroups.
\newblock {\em Int. J. Algebra Comput.}, 11:627--672, 2001.

\bibitem{Rhodes&Steinberg:2006}
J.~Rhodes and B.~Steinberg.
\newblock Complexity pseudovarieties are not local; {T}ype {II} subsemigroups
  can fall arbitrarily in complexity.
\newblock {\em Int. J. Algebra Comput.}, 16(4):739--748, 2006.

\bibitem{Rhodes&Steinberg:2009qt}
J.~Rhodes and B.~Steinberg.
\newblock {\em The $q$-theory of finite semigroups}.
\newblock Springer Monographs in Mathematics. Springer, 2009.

\bibitem{Straubing:1979a}
H.~Straubing.
\newblock Aperiodic homomorphisms and the concatenation product of recognizable
  sets.
\newblock {\em J. Pure Appl. Algebra}, 15:319--327, 1979.

\bibitem{Tilson:1987}
B.~Tilson.
\newblock Categories as algebra: an essential ingredient in the theory of
  monoids.
\newblock {\em J. Pure Appl. Algebra}, 48:83--198, 1987.

\bibitem{Whyburn:1964}
G.~T. Whyburn.
\newblock {\em Topological Analysis}.
\newblock Princeton University Press, 1964.

\bibitem{Willard:1970}
S.~Willard.
\newblock {\em General Topology}.
\newblock Addison-Wesley, Reading, Mass., 1970.

\end{thebibliography}
\end{document}